\newtheorem{theorem}{Theorem}[section]
\newtheorem{lemma}[theorem]{Lemma}
\theoremstyle{definition}
\theoremstyle{remark}
\numberwithin{equation}{section}
\def\bfx{{\mathbf x}}
\def\dbZ{{\mathbb Z}}
\def\grm{{\mathfrak m}}
\def\grs{{\mathfrak s}}
\def\alp{{\alpha}}
\def\Ups{{\Upsilon}}
\def\d{{\partial}}
\def\eps{\varepsilon}
\def\le{\leqslant} \def\ge{\geqslant}
\def\d{{\,{\rm d}}}
\begin{document}
\title[Diagonal quartic forms]{Pairs of diagonal quartic forms:\\ the asymptotic formulae}
\author[J\"org Br\"udern]{J\"org Br\"udern}
\address{Mathematisches Institut, Bunsenstrasse 3--5, D-37073 G\"ottingen, Germany}
\email{jbruede@gwdg.de}
\author[Trevor D. Wooley]{Trevor D. Wooley}
\address{Department of Mathematics, Purdue University, 150 N. University Street, West 
Lafayette, IN 47907-2067, USA}
\email{twooley@purdue.edu}
\subjclass[2010]{11D72, 11P55, 11E76}
\keywords{Quartic Diophantine equations, Hardy-Littlewood method.}
\thanks{First Author supported by Deutsche Forschungsgemeinschaft Project Number 
255083470. Second author supported by NSF grants DMS-1854398 and DMS-2001549.}
\date{}

\begin{abstract} We establish an asymptotic formula for the number of integral solutions of 
bounded height for pairs of diagonal quartic equations in $26$ or more variables. In certain 
cases, pairs in $25$ variables can be handled. 
\end{abstract}
\maketitle

\section{Introduction} 
Once again we are concerned with the pair of Diophantine equations
\begin{equation}\label{1.1}
a_1x_1^4+a_2x_2^4+\ldots +a_sx_s^4=b_1x_1^4+b_2x_2^4+\ldots +b_sx_s^4=0,
\end{equation}
wherein the given coefficients $a_j,b_j$ satisfy $(a_j,b_j)\in \dbZ^2\setminus \{(0,0)\}$ 
$(1\le j\le s)$. While our focus was on the validity of the Hasse principle for such pairs in 
two precursors of this article \cite{Jems1,BW21}, we now investigate the asymptotic 
density of integral solutions. Denote by $\mathscr N(P)$ the number of solutions in integers 
$x_j$ with $|x_j|\le P$ $(1\le j\le s)$ to this system. Then, subject to a natural rank 
condition on the coefficient matrix, one expects an asymptotic formula for $\mathscr N(P)$ 
to hold provided that $s$ is not too small. Indeed, following Hardy and Littlewood \cite{PN3} 
in spirit, the quantity $P^{8-s}\mathscr N(P)$ should tend to a limit that is itself a product 
of local densities. On a formal level, the densities are readily described. The real density, 
also known as the singular integral, is defined by
\begin{equation}
\label{1.2} \mathfrak I = \lim_{T\to \infty} \int_{-T}^T\int_{-T}^T\prod_{j=1}^s 
\int_{-1}^1 e\big((a_j\alpha+b_j\beta)t_j^4\big)\,\mathrm d t_j\,\mathrm d\alpha\,
\mathrm d\beta
\end{equation}
whenever the limit exists. Let $M(q)$ denote the number of solutions $\bfx$ in 
$(\dbZ/q\dbZ)^s$ satisfying (\ref{1.1}). Then for primes $p$, the $p$-adic density is 
defined by
\begin{equation}\label{1.3}
\mathfrak s_p = \lim_{h\to\infty} p^{(2-s)h} M(p^h),
\end{equation}
assuming again that this limit exists. In case of convergence, the product 
$\mathfrak S = \prod_p \mathfrak s_p$ is referred to as the singular series, and the 
desired asymptotic relation can be presented as the limit formula
\begin{equation}\label{1.4}
\lim_{P\to\infty} P^{8-s} \mathscr N (P) = \mathfrak I \mathfrak S.
\end{equation} 

Note that \eqref{1.4} can hold only when in each of the two equations comprising 
\eqref{1.1} there are sufficiently many non-zero coefficients. Of course one may pass from 
\eqref{1.1} to an equivalent system obtained by taking linear combinations of the two 
constituent equations. Thus, the invariant $q_0=q_0(\mathbf a,\mathbf b)$, defined by
$$q_0(\mathbf a,\mathbf b)=\min_{(c,d)\in\mathbb Z^2\setminus\{(0,0)\}}\text{card}
\{1\le j \le s: ca_j+db_j\neq 0\},$$
must be reasonably large. Indeed, it follows from Lemmata 3.1, 3.2 and 3.3 in our 
companion paper \cite{BW21} that the conditions $s\ge 16$ and $q_0\ge 12$ ensure that 
the limits \eqref{1.2} and \eqref{1.3} all exist, that the product $\mathfrak S$ is absolutely 
convergent, and that the existence of non-singular solutions to the system \eqref{1.1} in 
each completion of the rationals implies that $\mathfrak I \mathfrak S>0$. A first result 
concerning the limit \eqref{1.4} is then obtained by introducing the moment estimate
\begin{equation}\label{1.5}
\int_0^1\bigg| \sum_{x\le P}e(\alpha x^4)\bigg|^{14}\,\mathrm d\alpha \ll 
P^{10+\varepsilon},
\end{equation}
derived as the special case $u=14$ of Lemma \ref{lemma5.3} below, to a familiar method 
of Cook \cite{Coo1972} (see also \cite{BrC}). Here we point out that the estimate 
(\ref{1.5}) first occurs implicitly in the proof of \cite[Theorem 4.1]{Woo2012}, conditional 
on the validity of the (now proven) main conjecture in Vinogradov's mean value theorem 
(for which see \cite{BDG2016} and \cite[Corollary 1.3]{Woo2019}). In this way, one 
routinely confirms \eqref{1.4} when $s\ge 29$ and $q_0\ge 15$. This result, although not 
explicitly mentioned in the literature, is certainly familiar to experts in the area, and has to 
be considered as the state of the art today. It seems worth remarking in this context that, 
at a time when the estimate \eqref{1.5} was not yet available, the authors 
\cite{BWBull,Camb} handled the case $s\ge 29$ with more restrictive rank conditions. The 
main purpose of this memoir is to make three variables redundant. 

\begin{theorem}\label{theorem1.1} For pairs of equations \eqref{1.1} with $s\ge 26$ and 
$q_0\ge 15$, one has $\mathscr N(P)\sim \mathfrak I\mathfrak SP^{s-8}$.
\end{theorem}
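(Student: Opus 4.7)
The plan is to apply the Hardy--Littlewood circle method to the pair \eqref{1.1}. Writing $f(\gamma) = \sum_{|x|\le P} e(\gamma x^4)$, orthogonality yields
$$\mathscr N(P) = \iint_{[0,1)^2} \prod_{j=1}^{s} f(a_j\alpha + b_j\beta)\,\mathrm d\alpha\,\mathrm d\beta.$$
I would dissect $[0,1)^2$ into major arcs $\mathfrak M$, on which both $\alpha$ and $\beta$ lie simultaneously close to rationals of bounded denominator, and minor arcs $\mathfrak m$. On $\mathfrak M$, standard approximations for $f$ near rationals reduce the integrand to a product of complete exponential sums and an oscillatory factor, yielding the main term $\mathfrak I \mathfrak S P^{s-8}$ together with an admissible error. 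Under the hypotheses $s\ge 26$ and $q_0\ge 15$, the results of \cite{BW21} guarantee that the singular series and singular integral are absolutely convergent and that $\mathfrak I \mathfrak S$ is a bona fide positive constant. The theorem therefore reduces to the minor arc bound
$$\iint_{\mathfrak m} \prod_{j=1}^{26} \bigl|f(a_j\alpha + b_j\beta)\bigr| \,\mathrm d\alpha\,\mathrm d\beta = o(P^{18}).$$

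For the minor arc treatment I would perform a two-dimensional pruning. Let $\mathfrak E \subseteq \mathfrak m$ consist of those $(\alpha,\beta)$ admitting some primitive $(c,d)\in \mathbb Z^2$ with $\max(|c|,|d|)\le Q$ for which $c\alpha+d\beta$ lies on the one-dimensional major arcs of height $Q$ for the quartic problem, and let $\mathfrak n = \mathfrak m \setminus \mathfrak E$. On $\mathfrak n$, no such linear combination admits a small-denominator approximation, so a Weyl-type estimate produces $|f(a_j\alpha + b_j\beta)| \ll P^{1-\tau}$ uniformly for each of the $q_0 \ge 15$ relevant indices. Combining this pointwise saving on a suitable subset of factors with a mean value input---essentially \eqref{1.5} applied after a linear change of variables aligning a chosen direction with a single parameter $\gamma$---handles $\mathfrak n$ comfortably for $s\ge 26$.

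On the pruning set $\mathfrak E$ one partitions by the approximating direction $(c,d)$ and changes variables $(\alpha,\beta)\mapsto(\gamma_1,\gamma_2)$ with $\gamma_1 = c\alpha + d\beta$, so that the 2D arc becomes a 1D major arc in $\gamma_1$ times an interval in $\gamma_2$. Fourteen of the factors, those whose linear forms are non-parallel to $(c,d)$, can be absorbed via \eqref{1.5} applied in the $\gamma_2$-integration; in the $\gamma_1$-integration a one-dimensional circle method analysis, supplemented by an auxiliary mean value estimate for pairs of diagonal forms in a smaller number of variables, furnishes the remaining saving. The rank condition $q_0\ge 15$ is precisely what ensures that, after every such partition, sufficiently many factors remain available to support both the mean-value input and the pointwise Weyl-type estimate.

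The principal obstacle is that a direct Cook-style deployment of \eqref{1.5} requires $s\ge 29$, and shaving off three variables forces one to work much harder on the pruning set $\mathfrak E$. One cannot afford to consume fourteen factors entirely for the mean value and the remaining twelve entirely for pointwise bounds, since no margin is left after the inevitable loss from the change of variables and from the singular integral generated over $\gamma_1$. The resolution, in the spirit of recent Br\"udern--Wooley work, must be an iterated (nested) dissection of $\mathfrak E$, in which the same factors are re-used at different scales to supply both Weyl-type and mean-value contributions, the cumulative savings being tracked by a careful bookkeeping of H\"older exponents. Organising this nested argument so that it closes precisely at $s=26$, rather than at the looser $s\ge 29$ delivered by a single-shot application of \eqref{1.5}, is where the technical heart of the proof must lie.
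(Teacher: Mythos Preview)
Your major arc discussion is fine, and you correctly identify that the crux is the minor arc bound and that a single-shot application of \eqref{1.5} in the manner of Cook only reaches $s\ge 29$. But your proposed resolution---an ``iterated nested dissection'' of the pruning set $\mathfrak E$ in which factors are recycled at different scales---is not the mechanism that closes the gap, and there is no indication that such a scheme would actually reach $s=26$. The paper is explicit that even the refined differencing methods of \cite{Camb}, enhanced by \eqref{1.5}, only yield $s\ge 27$; a further pruning/Weyl argument of the type you sketch is exactly this class of method and will not squeeze out the last variable.

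What the paper actually does is of a different nature. The key idea is to pass to Fourier coefficients: writing $\psi_u(n)=\int_0^1|f(\alpha)|^u e(-\alpha n)\,\mathrm d\alpha$ and its smoothed-minor-arc analogue $\phi_u(n)$, the two-dimensional entangled integral $\iint |f(\mathrm M_1)f(\mathrm M_2)f(\mathrm M_3)|^u\,\mathrm d\alpha\,\mathrm d\beta$ is expressed via orthogonality as a sum $\sum_n \phi_u(l_1 n)\phi_u(l_2 n)\phi_u(l_3 n)$, which is then bounded by the cubic moment $\sum_n|\phi_u(n)|^3$. A large-values argument for the $\phi_u(n)$, driven by the bilinear estimates of Lemma \ref{lem4.3}, yields Theorem \ref{thm4.1}, and from it the mixed mean values of Theorems \ref{thm5.2}--\ref{thm5.4}. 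To make this Fourier approach compatible with a minor-arc restriction one needs a \emph{smooth} Farey dissection (\S3--4), so that ${\sf n}(\alpha)|f(\alpha)|^u$ has an absolutely convergent Fourier expansion. The minor arc integral is then reduced, via the combinatorial Lemma \ref{lemA3}, to a single extremal profile and estimated by Lemmata \ref{lem8.1} and \ref{lem8.2}. None of this structure---the Fourier duality \eqref{3rd}/\eqref{3rdv}, the cubic moment bounds, or the smoothed dissection---appears in your outline, and it is precisely this machinery, not a sharpened pruning, that bridges the gap from $29$ to $26$.
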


Relaxing the rank condition $q_0\ge 15$ appears to be a difficult enterprise, as we now 
explain. Consider a pair of equations \eqref{1.1} with $s\ge 29$, and suppose that 
$b_i=a_j=0$ for $1\le i\le 14<j\le s$. These two equations are independent and thus 
$\mathscr N(P)$ factorises as $\mathscr N(P)=N_1(P)N_2(P)$, where $N_1(P)$ and 
$N_2(P)$ denote the number of integral solutions of the respective single equations
\begin{equation}
\label{1.6}
a_1x_1^4+a_2x_2^4+\ldots +a_{14}x_{14}^4=0,
\end{equation}
with $|x_j|\le P$ $(1\le j\le 14)$, and
\begin{equation}\label{1.7}
b_{15}y_1^4+b_{16}y_2^4+\ldots +b_sy_{s-14}^4=0,
\end{equation}
with $|y_j|\le P$ $(1\le j\le s-14)$. The equation (\ref{1.7}) has at least $15$ non-zero 
coefficients, and so a straightforward application of the Hardy-Littlewood method using the 
mean value \eqref{1.5} shows that $P^{18-s}N_2(P)$ tends to a limit as $P\to \infty$, with 
this limit equal to a product of local densities analogous to $\mathfrak I$ and 
$\mathfrak s_p$. By choosing $b_j=(-1)^j$ for $15\le j\le s$, we ensure that this limit is 
positive, and thus $P^{8-s}\mathscr N(P)$ tends to a limit as $P\to\infty$ if and only if 
$P^{-10}N_1(P)$ likewise tends to a limit. From the definitions \eqref{1.2} and 
\eqref{1.3}, it is apparent that the local densities $\mathfrak I$ and $\grs_p$ factorise 
into components stemming from the equations underlying $N_1$ and $N_2$. The relation 
\eqref{1.4} therefore holds for this particular pair of equations if and only if 
$P^{-10}N_1(P)$ tends to the product of local densities associated with the equation 
\eqref{1.6}. In particular, were \eqref{1.4} known to hold in any case where $q_0=14$ and 
$s$ is large, then it would follow that $P^{-10}N_1(P)$ tends to the limit 
suggested by a formal application of the circle method, a result that is not yet known. This 
shows that relaxing the condition on $q_0$ would imply progress with single diagonal 
quartic equations.\par

The invariant $q_0$ is a very rough measure for the entanglement of the two equations 
present in \eqref{1.1}. This can be refined considerably. The pairs $(a_j,b_j)$ are all 
non-zero in $\mathbb Z^2$, so they define a point $(a_j:b_j)\in\mathbb P(\mathbb Q)$. We 
refer to indices $i,j\in\{1,2,\ldots,s\}$ as {\em equivalent} if $(a_i:b_i)=(a_j:b_j)$. This 
defines an equivalence relation on $\{1,2,\ldots,s\}$. Suppose that there are $\nu$ 
equivalence classes with $r_1,\ldots ,r_\nu$ elements, respectively, where 
$r_1\ge r_2\ge \ldots\ge r_\nu$. On an earlier occasion \cite{Camb} we named the tuple 
$(r_1,\ldots,r_\nu)$ the {\em profile} of the equations \eqref{1.1}. Note that $q_0=s-r_1$, 
whence our assumed lower bound $q_0\ge 15$ implies that $r_1\le s-15$ and $\nu\ge 2$. 
If more is known about the profile, then we can save yet another variable.
 
\begin{theorem}\label{theorem1.2} Suppose that $s= 25$ and that $(r_1,\ldots,r_\nu)$ is 
the profile of the pair of equations \eqref{1.1}. If $q_0\ge 16$ and $\nu\ge 5$, then 
$\mathscr N(P)\sim \mathfrak I\mathfrak SP^{s-8}$.
\end{theorem}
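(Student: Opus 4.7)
The plan is to mount a two-dimensional Hardy-Littlewood analysis. With $f(\gamma)=\sum_{|x|\le P}e(\gamma x^4)$, the counting function reads
$$\mathscr{N}(P)=\int_0^1\int_0^1\prod_{j=1}^{s}f(a_j\alpha+b_j\beta)\,d\alpha\,d\beta,$$
and I would dissect $[0,1)^2$ into major and minor arcs with respect to simultaneous rational approximations of $(\alpha,\beta)$. On the major arcs, the hypotheses $s=25\ge 16$ and $q_0\ge 16\ge 12$ permit direct appeal to Lemmata~3.1--3.3 of \cite{BW21}, which yield the expected main term $\mathfrak{I}\mathfrak{S}P^{s-8}$ along with positivity of $\mathfrak{I}\mathfrak{S}$ under the local solubility assumption implicit in the conclusion.

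For the minor-arc contribution, which must be shown to be $o(P^{17})$, the profile hypothesis $\nu\ge 5$ is essential. I would select representatives $j_1,\ldots,j_5$, one from each of five distinct equivalence classes, so that the linear forms $L_i(\alpha,\beta)=a_{j_i}\alpha+b_{j_i}\beta$ are pairwise projectively independent. The constraint $q_0\ge 16$ forces $r_1\le 9$, so no single class absorbs too many variables, leaving a pool of at least fourteen indices available for the fourteenth-moment bound \eqref{1.5}. A secondary dissection of the minor arcs then classifies each $(\alpha,\beta)$ according to which of the $L_i$ is close to a rational. If all five were close simultaneously, $(\alpha,\beta)$ itself would lie on a set absorbable into the major arcs via a pre-major-arc pruning step in the spirit of \cite{Camb}, so on the genuine minor arcs at least one $L_i$ avoids its own one-dimensional major arcs and supports a Weyl-type bound $|f(L_i)|\ll P^{7/8+\varepsilon}$.

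The minor-arc estimate would then be assembled by H\"older's inequality, combining pointwise Weyl savings in one or more independent directions with \eqref{1.5} on a packet of fourteen variables and trivial bounds on the remainder. Linear changes of variables from $(\alpha,\beta)$ to pairs of the $L_i$ convert two-dimensional mean values into products of one-dimensional moments at the cost of a bounded Jacobian. The exponent budget is tight: \eqref{1.5} saves $P^{4-\varepsilon}$ below the trivial bound across its fourteen variables, and the residual eleven variables must collectively yield a further $P^{4-\delta}$; distributing this shortfall across the five available directions, each capable of a $P^{1/8}$ Weyl gain, is the combinatorial heart of the argument, and a profile with only four directions would leave one direction too few in the worst-case disposition of one-dimensional major arcs.

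The hard part will be executing the pruning step cleanly: verifying that after discarding configurations where all five $L_i$ lie simultaneously near rationals, the residual minor-arc set decomposes into finitely many regimes on each of which a H\"older combination closes. This is where the refined invariant $\nu$, rather than the coarser $q_0$ alone, plays a decisive role; $q_0\ge 16$ supplies the fourteen-variable mean, but only the separation of directions encoded by $\nu\ge 5$ produces enough independent Weyl gains to make up the deficit of one variable relative to Theorem~\ref{theorem1.1}.
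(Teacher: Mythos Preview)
Your overall architecture (two-dimensional circle method, major arcs via \cite{BW21}, minor arcs to be shown $o(P^{17})$) is correct, and your identification of the constraint $r_1\le 9$ is on target. But the toolkit you propose for the minor arcs---the fourteenth moment \eqref{1.5}, the pointwise Weyl bound $|f(L_i)|\ll P^{7/8+\varepsilon}$, and H\"older across directions---is the classical Cook approach, and the paper itself notes (in \S1) that this combination yields \eqref{1.4} only for $s\ge 29$. Your exponent budget does not close: with $r_1\le 9$ you cannot place fourteen Weyl sums along a single direction, so after Cook-type H\"older the effective one-dimensional moments available are at most a $25/2$-th moment, and interpolating between $\int|f|^8\ll P^{5+\varepsilon}$ and \eqref{1.5} gives $\int|f|^{25/2}\ll P^{35/4+\varepsilon}$, hence a two-direction bound of order $P^{35/2+\varepsilon}$, a full $P^{1/2}$ above target. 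A Weyl saving of $P^{1/8}$ in one direction (which is all the secondary dissection guarantees on the genuine minor arcs) does not repair this, and you cannot claim five simultaneous Weyl savings: the decomposition you describe only ensures that \emph{at least one} $L_i$ is on its own minor arcs, not all five.

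What the paper actually does is replace both ingredients you name by substantially stronger devices. First, in place of pointwise Weyl plus one-dimensional moments, it estimates the entangled two-dimensional mean value $\iint_{\mathfrak p}|f(\mathrm M_1)f(\mathrm M_2)|^{32/3}|f(\mathrm M_3)|^6\,d\alpha\,d\beta\ll P^{19-1/18+\varepsilon}$ via a large-values analysis of cubic moments of Fourier coefficients of $|f|^u$ restricted to \emph{smoothed} minor arcs (this is the content of \S\S3--6, culminating in Theorem~\ref{thm5.4}). Second, and this is where $\nu\ge 5$ actually enters, it invokes a two-dimensional analogue of Hua's lemma (Theorem~\ref{thm6.2}) to bound $\iint|f(\mathrm M_1)^4f(\mathrm M_2)^4f(\mathrm M_3)^2f(\mathrm M_4)^4f(\mathrm M_5)^4|\,d\alpha\,d\beta\ll P^{11+\varepsilon}$, a bound $P^{1/6}$ stronger than the corresponding cubic-moment estimate and requiring five pairwise independent directions. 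The minor-arc bound is then obtained as the H\"older combination $(P^{11})^{1/4}(P^{19-1/18})^{3/4}\ll P^{17-1/24}$ after first using Lemma~\ref{lemA3} to reduce to the extremal profile $(9,9,5,1,1)$. Neither of these two auxiliary mean values is accessible from \eqref{1.5} and Weyl alone; they are the genuine innovations, and your proposal does not contain them.
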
 

For a pair \eqref{1.1} in ``general position'' one has $\nu=s$ and $r_1=1$, and in a 
quantitative sense easily made precise, such pairs constitute almost all such Diophantine 
systems. Hence, the conclusion of Theorem \ref{theorem1.2} applies to almost all pairs 
of equations of the shape \eqref{1.1}.\par

We pointed out long ago \cite{Camb} that a diffuse profile can be advantageous. However, 
even with the estimate \eqref{1.5} in hand, the method of \cite{Camb} only handles cases 
where $s\ge 27$ and $r_1$ and $r_2$ are not too large. Thus our results improve on all 
previous work on the subject even if the input to the published versions is enhanced by the 
newer mean value bound \eqref{1.5}.\par

It is time to describe the methods, and in particular the new ideas involved in the proofs. 
Our more recent results specific to systems of diagonal quartic forms 
\cite{Jems1,Jems2,BW21} all depend on large values estimates for Fourier coefficients of 
powers of Weyl sums, and the current communication is no exception. The large values 
estimates provide upper bounds for higher moments of these Fourier coefficients, and these 
in turn yield mean value bounds for correlations of Weyl sums. We describe this link here in 
a setting appropriate for application to pairs of equations. Consider a $1$-periodic twice 
differentiable function $h:\mathbb R \to\mathbb R$. Its Fourier expansion
\begin{equation}\label{Fou}
h(\alpha)=\sum_{n\in \dbZ}\hat h(n)e(\alpha n)
\end{equation}  
converges uniformly and absolutely. Hence, by orthogonality, one has
\begin{equation}\label{3rd}
\int_0^1\!\!\int_0^1 h(\alpha)h(\beta)h(-\alpha-\beta)\,\mathrm d\alpha\,\mathrm d\beta = 
\sum_{n\in \dbZ}\hat h(n)^3.
\end{equation}
The methods of \cite{Jems1,Jems2,BW21} rest on this and closely related identities, 
choosing $h(\alpha)=|g(\alpha)|^u$ with suitable quartic Weyl sums $g$ and a positive real 
number $u$. As a service to future scholars, we analyse in some detail the differentiability 
properties of functions like $|g(\alpha)|^u$ in \S3. It transpires that when $u\ge 2$ then 
the relation \eqref{3rd} holds. We use \eqref{3rd} with $h(\alpha)=|f(\alpha)|^u$, where 
now 
\begin{equation}\label{ff}
f(\alpha)= \sum_{x\le P} e(\alpha x^4)
\end{equation}    
is the ordinary Weyl sum. We then obtain new entangled mean value estimates for smaller 
values of $u$. This alone is not of strength sufficient to reach the conclusions of Theorem 
\ref{theorem1.1}.\par

As experts in the field will readily recognise, for larger values of $u$ the quality of the 
aforementioned mean value estimates is diluted by major arc contributions, and one would 
therefore like to achieve their removal. Thus, if $\mathfrak n$ is a $1$-periodic set of real 
numbers with $\mathfrak n\cap [0,1)$ a classical choice of minor arcs and 
${\bf 1}_{\mathfrak n}$ is the indicator function of $\mathfrak n$, then one is tempted to 
apply the function $h(\alpha)={\bf 1}_{\mathfrak n}(\alpha)|f(\alpha)|^u$ in place of 
$|f(\alpha)|^u$ within \eqref{3rd}. However, this function is no longer continuous. We 
bypass this difficulty by introducing a smoothed Farey dissection in \S4. This is achieved by a 
simple and very familiar convolution technique that should be useful in other contexts, too. 
In this way, in \S5 we obtain a minor arc variant of the cubic moment method developed in 
our earlier work \cite{Jems1}. Equipped with this and the mean value bounds that follow 
from it, one reaches the conclusions of Theorem \ref{theorem1.1} in the majority of cases 
under consideration. Unfortunately, some cases with exceptionally large values of $r_j$ 
stubbornly deny treatment. To cope with these remaining cases, we develop a mixed 
moment method in \S6.\par

The point of departure is a generalisation of \eqref{3rd}. If $h_1,h_2,h_3$ are functions 
that qualify for the discussion surrounding \eqref{Fou} and \eqref{3rd}, then by invoking 
orthogonality once again, we see that
\begin{equation}\label{3rdv}
\int_0^1\!\!\int_0^1 h_1(\alpha)h_2(\beta)h_3(-\alpha-\beta)\,\mathrm d\alpha\,\mathrm 
d\beta = \sum_{n\in \mathbb Z}\hat h_1(n)\hat h_2(n) \hat h_3(n).
\end{equation}
By H\"older's inequality, the right hand side here is bounded in terms of the three moments
\begin{equation}\label{3rdv2}
\sum_{n\in \mathbb Z}|\hat h_j(n)|^3.
\end{equation}
In all cases where $h_j(\alpha)= |f(\alpha)|^{u_j}$ for some even positive integral 
exponent $u_j$ one has $\hat h_j(n)\ge 0$, so \eqref{3rd} can be used in reverse to 
interpret \eqref{3rdv2} in terms of the number of solutions of a pair of Diophantine 
equations. The purely analytic description of the method has several advantages. First and 
foremost, one can break away from even numbers $u_j$, and still estimate all three cubic 
moments \eqref{3rdv2}. This paves the way to a complete treatment of pairs of equations 
\eqref{1.1} with $s\ge 26$ and $q_0\ge 15$. Beyond this, the identity \eqref{3rdv} offers 
extra flexibility for the arithmetic harmonic analysis. Instead of the homogeneous passage 
from \eqref{3rdv} to \eqref{3rdv2} one could apply H\"older's inequality with differing 
weights. As an example of stunning simplicity, we note that the expression in \eqref{3rdv} is 
bounded above by
$$  \biggl(\sum_{n\in \mathbb Z}|\hat h_1(n)|^2\biggr)^{1/2}
\biggl(\sum_{n\in \mathbb Z}|\hat h_2(n)|^4\biggr)^{1/4}
\biggl(\sum_{n\in \mathbb Z}|\hat h_3(n)|^4\biggr)^{1/4}. $$
If we apply this idea with $h_j(\alpha)=|f(\alpha)|^{u_j}$ and $u_j$ a positive even integer, 
then the first factor relates to a single diagonal Diophantine equation while the other two 
factors concern systems consisting of three diagonal Diophantine equations. This argument is 
dual (in the sense that we work with Fourier coefficients) to a method that we described as 
{\em complification} in our work on systems of cubic forms \cite{MA}. There is, of course, 
an obvious generalisation of \eqref{3rd} to higher dimensional integrals that has been used 
here. This points to a complex interplay between systems of diagonal equations in which the 
size parameters (number of variables and number of equations) vary, and need not be 
restricted to natural numbers. We have yet to explore the full potential of this observation.

\par We briefly comment on the role of the Hausdorff-Young inequality 
\cite[Chapter XII, Theorem 2.3]{Z} within this circle of ideas. In the notation of 
\eqref{3rdv} this asserts that
$$ \sum_{n\in \mathbb Z}|\hat h_j(n)|^3 \le \biggl(\int_0^1 |h_j(\alpha)|^{3/2}
\,\mathrm d\alpha \biggr)^2. $$
Passing through \eqref{3rdv} and \eqref{3rdv2}, one then arrives at the estimate
\begin{equation}\label{HY2} \biggl|\int_0^1\!\!\int_0^1 
h_1(\alpha)h_2(\beta)h_3(-\alpha-\beta)\,\mathrm d\alpha\,\mathrm d\beta\biggr| 
\le \prod_{j=1}^3 \biggl(\int_0^1 |h_j(\alpha)|^{3/2}\,\mathrm d\alpha \biggr)^{2/3}. 
\end{equation}
However, by H\"older's inequality, one finds 
$$ \biggl|\int_0^1\!\!\int_0^1 h_1(\alpha)h_2(\beta)h_3(-\alpha-\beta)\,\mathrm d\alpha\,
\mathrm d\beta\biggr| \le \prod_{1\le i<j\le 3} \biggl(\int_0^1 |h_i h_j|^{3/2}\,\mathrm 
d\alpha  \, \mathrm d\beta \biggr)^{1/3}, $$
where, on the right hand side, one should read $h_1=h_1(\alpha)$, $h_2=h_2(\beta)$ and 
$h_3=h_3(-\alpha-\beta)$. By means of obvious linear substitutions, this also delivers the 
bound \eqref{HY2}. This last method is essentially that of Cook \cite{Coo1972}. Our 
approach is superior because the methods are designed to remember the arithmetic source 
of the Weyl sums when estimating moments of Fourier coefficients.\par

The proof of Theorem \ref{theorem1.2} requires yet another tool that is a development of 
our multidimensional version of Hua's lemma \cite{BWBull}. This somewhat outdated work is 
based on Weyl differencing. An analysis of the method shows that whenever a new block of 
differenced Weyl sums enters the recursive process, a new entry $r_j$ to the profile of the 
underlying Diophantine system is needed. It is here where one imports undesired 
constraints on the profile, as in Theorem \ref{theorem1.2}. However, powered with the new 
upper bound \eqref{1.5}, the method just described yields a bound for a two-dimensional 
entangled mean value over eighteen Weyl sums that outperforms the cubic moments 
technique by a factor $P^{1/6}$ (compare Theorem \ref{thm5.1} with Theorem 
\ref{thm6.2}). Within a circle method approach, this mean value is introduced via H\"older's 
inequality. In the complementary factor, we have available an abundance of Weyl sums. 
Fortunately the cubic moments technique restricted to minor arcs presses the method home. 
We point out that our proof of Theorem \ref{theorem1.2} constitutes the first instance in 
which the cubic moments technique is successfully coupled with the differencing techniques 
derived from \cite{BWBull}.\par

One might ask whether more restrictive conditions on the profile allow one to reduce the 
number of variables even further. As we demonstrate at the very end of this memoir it is 
indeed possible to accelerate the convergence in \eqref{1.4}, but even the extreme 
condition $r_1=1$ seems insufficient to save a variable without another new idea.\par

Once the new moment estimates are established, our proofs of Theorems \ref{theorem1.1} 
and \ref{theorem1.2} are fairly concise. There are two reasons. First, we may import the 
major arc work, to a large extent, from \cite{BW21}. Second, more importantly, our minor 
arc treatment rests on a new inequality (Lemma \ref{lemA3} below) that entirely avoids 
combinatorial difficulties associated with exceptional profiles. This allows us to reduce the 
minor arc work to a single profile with a certain maximality property. We expect this 
argument to become a standard preparation step in related work, and have therefore 
presented this material in broad generality. We refer to \S2 where the reader will also find 
comment on previous attempts in this direction.\par
 
{\em Notation}. Our basic parameter is $P$, a sufficiently large real number. Implicit 
constants in Vinogradov's familiar symbols $\ll$ and $\gg$ may depend on $s$ and 
$\varepsilon$ as well as ambient coefficients such as those in the system \eqref{1.1}. 
Whenever $\varepsilon$ appears in a statement we assert that the statement holds for 
each positive real value assigned to $\varepsilon$. As usual, we write $e(z)$ for 
$e^{2\pi iz}$.

\section{Some inequalities} This section belongs to real analysis. We discuss a number of 
inequalities for products. As is familiar for decades, in an attempt to prove results of the 
type described in Theorems \ref{theorem1.1} and \ref{theorem1.2} via harmonic analysis, 
it is desirable to simplify to a situation where the profile is extremal relative to the conditions 
in hand, that is, the multiplicities $r_1, r_2, \ldots$ are as large as possible, and 
consequently $\nu$ is as small as is possible. In the past, most scholars have applied 
H\"older's inequality to achieve this objective, often by an {\em ad hoc} argument that led 
to the consideration of several cases separately. The purpose of this section is to make 
available general inequalities that encapsulate the reduction step in a single lemma of 
generality sufficient to include all situations that one encounters in practice.\par

The germ of our method is a classical estimate, sometimes referred to as Young's 
inequality: if $p$ and $q$ are real numbers with $p>1$ and
$$\frac{1}{p}+\frac{1}{q}=1,$$
then for all non-negative real numbers $u$ and $v$ one has
\begin{equation}
\label{HY} uv \le \frac{u^p}{p}+\frac{v^q}{q}.
\end{equation}
This includes the case $r=2$ of the bound
\begin{equation}
\label{T} |z_1z_2\cdots z_r| \le \frac1r \big(|z_1|^r+\ldots+|z_r|^r\big)
\end{equation}
which holds for all $r\in\mathbb N$ and all $z_j\in\mathbb C$ $(1\le j\le r)$. Indeed, the 
general case of \eqref{T} follows from \eqref{HY} by an easy induction on $r$.\par

In the following chain of lemmata we are given a number $\nu\in\mathbb N$ and integral 
exponents $m_j$, $M_j$ $(1\le j\le \nu)$ with 
\begin{equation}\label{Hyp1}
m_1\ge m_2\ge \ldots \ge m_\nu\ge 0, \qquad M_1\ge M_2\ge\ldots\ge M_\nu\ge 0
\end{equation}
and
\begin{equation}\label{Hyp2}
\sum_{l=1}^L m_l \le \sum_{l=1}^L M_l \quad (1\le L<\nu), \qquad 
\sum_{l=1}^\nu m_l = \sum_{l=1}^\nu M_l.
\end{equation}
We write $S_\nu$ for the group of permutations on $\nu$ elements. We refer to a function 
$w:S_\nu\to [0, 1]$ with
$$ \sum_{\sigma\in S_\nu} w(\sigma) = 1 $$
as a {\em weight on} $S_\nu$.

\begin{lemma}\label{lemA1}
Suppose that the exponents $m_j$, $M_j$ $(1\le j\le \nu)$ satisfy \eqref{Hyp1} and 
\eqref{Hyp2}. Then there is a weight $w$ on $S_\nu$ with the property that for all 
non-negative real numbers $u_1,u_2,\ldots,u_\nu$ one has
\begin{equation}\label{A1}
u_1^{m_1}u_2^{m_2}\cdots u_\nu^{m_\nu} \le \sum_{\sigma\in S_\nu} w(\sigma)
u_{\sigma(1)}^{M_1}u_{\sigma(2)}^{M_2}\cdots u_{\sigma(\nu)}^{M_\nu}.
\end{equation}
\end{lemma}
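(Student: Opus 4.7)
The hypotheses \eqref{Hyp1} and \eqref{Hyp2} express precisely that $(M_1,\ldots,M_\nu)$ majorizes $(m_1,\ldots,m_\nu)$ in the classical sense of Hardy, Littlewood and P\'olya. My plan is to argue by induction on the non-negative integer
$$\Delta=\sum_{L=1}^{\nu-1}\biggl(\sum_{l=1}^L M_l-\sum_{l=1}^L m_l\biggr).$$
When $\Delta=0$ the conditions in \eqref{Hyp2} force $M=m$, and the lemma holds trivially with $w$ supported on the identity permutation.

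For the inductive step I would let $i$ be the largest index with $M_i>m_i$ and $j$ the smallest index exceeding $i$ with $M_j<m_j$. The equality of total sums in \eqref{Hyp2} guarantees that such a $j$ exists, while the maximality of $i$ forces $M_l=m_l$ throughout $i<l<j$. I then introduce the unit transfer $M'_i=M_i-1$, $M'_j=M_j+1$, and $M'_l=M_l$ for $l\ne i,j$. A short check using the monotonicity of $m$ shows that $M'$ still satisfies \eqref{Hyp1} and \eqref{Hyp2} relative to $m$, and that the corresponding quantity reduces to $\Delta'=\Delta-(j-i)<\Delta$.

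The inductive hypothesis then delivers a weight $w'$ on $S_\nu$ bounding $u_1^{m_1}\cdots u_\nu^{m_\nu}$ by $\sum_{\sigma\in S_\nu}w'(\sigma)u_{\sigma(1)}^{M'_1}\cdots u_{\sigma(\nu)}^{M'_\nu}$. To close the induction I invoke the two-variable weighted arithmetic–geometric mean inequality: since $M_i>M_j$ by the monotonicity of $M$ and the identities $M_l=m_l$ for $i<l<j$, the decompositions $M_i-1=\lambda M_i+(1-\lambda)M_j$ and $M_j+1=\lambda M_j+(1-\lambda)M_i$ with $\lambda=(M_i-M_j-1)/(M_i-M_j)\in[0,1)$, coupled with \eqref{HY}, yield
$$u^{M_i-1}v^{M_j+1}=(u^{M_i}v^{M_j})^{\lambda}(u^{M_j}v^{M_i})^{1-\lambda}\le \lambda u^{M_i}v^{M_j}+(1-\lambda)u^{M_j}v^{M_i}$$
for all non-negative $u,v$. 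Applied with $(u,v)=(u_{\sigma(i)},u_{\sigma(j)})$, this bounds each $\sigma$-summand above by a convex combination of $u_{\sigma(1)}^{M_1}\cdots u_{\sigma(\nu)}^{M_\nu}$ and $u_{\sigma\circ(i\,j)(1)}^{M_1}\cdots u_{\sigma\circ(i\,j)(\nu)}^{M_\nu}$; setting $w(\sigma)=\lambda w'(\sigma)+(1-\lambda)w'(\sigma\circ(i\,j))$ then produces the required weight on $S_\nu$, which sums to $1$ and validates \eqref{A1}.

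The main obstacle, and the only non-routine part of the argument, is the combinatorial verification that the unit transfer preserves both the monotonicity of $M'$ and the partial-sum inequalities of \eqref{Hyp2}. The delicate case is $j=i+1$, where the extremal choice of $i$ is needed to deduce $M_i\ge m_i+1\ge m_{i+1}+1\ge M_{i+1}+2$; when $j>i+1$ the equalities $M_l=m_l$ for $i<l<j$ combined with the monotonicity of $m$ render the verification immediate. Once this bookkeeping is in place, the entire induction rests on a single two-variable application of Young's inequality per step.
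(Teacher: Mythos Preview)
Your argument is correct. Both your proof and the paper's are inductive ``Robin Hood'' arguments in the spirit of the classical Hardy--Littlewood--P\'olya majorization theory, but they run in opposite directions. The paper inducts on $\nu+D$ with $D=\sum_l|M_l-m_l|$ and pushes $m$ \emph{up} toward $M$: it applies Young's inequality first to replace $(m_1,m_\nu)$ by $(m_1+1,m_\nu-1)$, and only then invokes the inductive hypothesis; this forces a case split, because when some intermediate partial sum already coincides the paper must instead decompose the problem into two independent blocks. You push $M$ \emph{down} toward $m$ by the unit transfer $(M_i,M_j)\mapsto(M_i-1,M_j+1)$, invoke the inductive hypothesis first on the pair $(m,M')$, and then apply Young's inequality to lift the $M'$-bound to an $M$-bound. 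Your choice of $i$ as the \emph{largest} index with $M_i>m_i$ is what makes this work uniformly: it forces $M_l=m_l$ throughout $i<l<j$, which is precisely what is needed to verify that $M'$ remains decreasing and still majorizes $m$, thereby eliminating any need for a separate block-decomposition case. The payoff is a slightly more streamlined induction; the paper's version, on the other hand, keeps the recursive structure of the weight $w$ more transparent when partial sums happen to coincide.
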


\begin{proof} We define
$$ D= \sum_{l=1}^\nu |M_l-m_l| $$
and proceed by induction on $\nu+D$. In the base case of the induction one has 
$\nu+D=1$. In this situation $\nu=1$ and $D=0$, and the claim of the lemma is trivially 
true with $\sigma=\text{id}$ and $w(\sigma)=1$.\par

Now suppose that $\nu+D>1$. We consider two cases. First we suppose that there is a 
number $\nu_1$ with $1\le \nu_1<\nu$ and
$$  \sum_{l=1}^{\nu_1} m_l = \sum_{l=1}^{\nu_1} M_l.$$
We put
$$ D_1= \sum_{l=1}^{\nu_1} |M_l-m_l|, \quad D_2 = \sum_{l=\nu_1+1}^\nu |M_l-m_l|, 
\quad \nu_2=\nu-\nu_1. $$
Then \eqref{Hyp1} and \eqref{Hyp2} are valid with $\nu_1$ in place of $\nu$, and one has 
$D_1\le D$. Hence $\nu_1+D_1<\nu +D$ so that we may invoke the inductive hypothesis 
to find a weight $w_1$ on $ S_{\nu_1}$ with
\begin{equation}\label{A3}
 u_1^{m_1}u_2^{m_2}\cdots u_{\nu_1}^{m_{\nu_1}} \le \sum_{\sigma\in S_{\nu_1}} 
w_1(\sigma)u_{\sigma(1)}^{M_1}u_{\sigma(2)}^{M_2}\cdots 
u_{\sigma(\nu_1)}^{M_{\nu_1}}.
\end{equation}
Similarly, in the current situation, the numbers $m_{\nu_1+j}$,  $M_{\nu_1+j}$ 
$(1\le j \le \nu_2)$ may take the roles of $m_j$, $M_j$ in \eqref{Hyp1} and \eqref{Hyp2} 
with $\nu_2$ in place of $\nu$. Again, we have $\nu_2+D_2<\nu+D$. Now writing $\tau$ 
for a permutation in $S_{\nu_2}$ acting on the set $\{\nu_1+1,\nu_1+2,\ldots, \nu\}$, we 
may invoke the inductive hypothesis again to find a weight $w_2$ on $S_{\nu_2}$ with
\begin{equation}\label{A4}
 u_{\nu_1+1}^{m_{\nu_1+1}}u_{\nu_1+2}^{m_{\nu_1+2}}\cdots u_{\nu}^{m_{\nu}} 
\le \sum_{\tau\in S_{\nu_2}}w_2(\tau)u_{\tau(\nu_1+1)}^{M_{\nu_1+1}}
u_{\tau(\nu_1+2)}^{M_{\nu_1+2}}\cdots u_{\tau(\nu)}^{M_{\nu}}.
\end{equation}
We multiply the inequalities \eqref{A3} and \eqref{A4}. It is then convenient to read 
permutations $\sigma$ on $1,2,\ldots,\nu_1$ and $\tau$ on $\nu_1+1,\nu_1+2,\ldots,\nu$ 
as permutations on $1,2,\ldots, \nu$ with $\sigma(j)=j$ for $j>\nu_1$ and $\tau(j)=j$ for 
$j\le \nu_1$. Then, for permutations of the type $\sigma\tau$ in $S_\nu$ we put 
$w(\sigma\tau) = w_1(\sigma)w_2(\tau)$, and we put $w(\phi)=0$ for the remaining 
permutations $\phi\in S_\nu$. With this function $w$ the product of \eqref{A3} and 
\eqref{A4} becomes \eqref{A1}, completing the induction in the case under consideration.

\par In the complementary case we have
\begin{equation}\label{A5} \sum_{l=1}^L m_l < \sum_{l=1}^L M_l \qquad (1\le L<\nu).
\end{equation}
In particular, this shows that $m_1<M_1$. Also, by comparing the case $L=\nu-1$ of 
\eqref{A5} with the equation corresponding to the case $L=\nu$ in \eqref{Hyp2}, we see 
that $m_\nu>M_\nu$, as a consequence of which we have $m_\nu\ge 1$. We write 
$m_1= m_\nu+r$. In view of \eqref{Hyp1}, we see that $r\ge 0$, and so an application of 
\eqref{HY} with $q=r+2$ leads to the inequality
$$ u_1^{r+1} u_\nu \le \frac{r+1}{r+2} u_1^{r+2} + \frac1{r+2} u_\nu^{r+2}. $$
Recall that $m_\nu\ge 1$, whence $m_1-r-1=m_\nu-1\ge 0$.  It follows that
$$ u_1^{m_1}u_\nu^{m_\nu}\le u_1^{m_1-r-1} u_\nu^{m_\nu-1}
\Big(  \frac{r+1}{r+2} u_1^{r+2} + \frac1{r+2} u_\nu^{r+2}\Big), $$
and thus
\begin{align*}
u_1^{m_1}\cdots u_\nu^{m_\nu} \le \frac{r+1}{r+2} u_1^{m_1+1} & 
u_2^{m_2}u_3^{m_3}\cdots u_{\nu-1}^{m_{\nu-1}} u_\nu^{m_\nu -1} \\ 
+&\frac1{r+2}u_1^{m_\nu -1}u_2^{m_2}u_3^{m_3}\cdots 
u_{\nu-1}^{m_{\nu-1}}u_\nu^{m_1+1}.
\end{align*}
The chain of exponents $m_1+1,m_2,m_3,\ldots,m_{\nu-1},m_\nu-1$ is decreasing, and we 
have $m_1+1\le M_1$ and $m_\nu-1\ge 0$. Hence, in view of \eqref{A5}, the hypotheses 
\eqref{Hyp1} and \eqref{Hyp2} are still met when we put $m_1+1$ in place of $m_1$ and 
$m_\nu-1$ in place of $m_\nu$. However, $m_1+1$ is closer to $M_1$ than is $m_1$, and 
likewise $m_\nu-1$ is closer to $M_\nu$ than is $m_\nu$. The value of $D$ associated with 
this new chain of exponents therefore decreases, and so we may apply the inductive 
hypothesis to find a weight $W$ on $S_\nu$ with
$$ u_1^{m_1+1}u_2^{m_2}u_3^{m_3}\cdots u_{\nu-1}^{m_{\nu-1}} u_\nu^{m_\nu -1} 
\le \sum_{\sigma\in S_\nu} W(\sigma) u_{\sigma(1)}^{M_1}u_{\sigma(2)}^{M_2}\cdots 
u_{\sigma(\nu)}^{M_\nu}. $$
Interchanging the roles of $u_1$ and $u_\nu$, and denoting by $\tau$ the transposition of 
$1$ and $\nu$, we obtain in like manner the bound
$$ u_\nu^{m_1+1}u_2^{m_2}u_3^{m_3}\cdots u_{\nu-1}^{m_{\nu-1}} u_1^{m_\nu -1} 
\le \sum_{\sigma\in S_\nu} W(\sigma\circ \tau)u_{\sigma(1)}^{M_1}u_{\sigma(2)}^{M_2}
\cdots u_{\sigma(\nu)}^{M_\nu}. $$
If we now import the last two inequalities into the inequality preceding them, we find that 
\eqref{A1} holds with 
$$ w(\sigma) = \frac{r+1}{r+2} W(\sigma) + \frac1{r+2}W(\sigma\circ \tau), $$
and $w$ is a weight on $S_\nu$. This completes the induction in the second case.
\end{proof}

\begin{lemma}\label{lemA2}
Suppose that $m_j$, $M_j$ $(1\le j\le \nu)$ satisfy \eqref{Hyp1} and \eqref{Hyp2}. For 
$1\le j\le \nu$ let $h_j:\mathbb R^n \to [0,\infty)$ denote a Lebesgue measurable function. 
Then
$$ \int h_{1}^{m_1}h_{2}^{m_2}\cdots h_{\nu}^{m_\nu}\, \mathrm d\mathbf{x}
\le \max_{\sigma\in S_\nu} \int  h_{\sigma(1)}^{M_1}h_{\sigma(2)}^{M_2}\cdots 
h_{\sigma(\nu)}^{M_\nu}\,\mathrm d\mathbf{x}. $$
\end{lemma}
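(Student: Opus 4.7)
The plan is to derive Lemma \ref{lemA2} as an immediate consequence of Lemma \ref{lemA1} via a pointwise-then-integrate strategy, so that all the real combinatorial content sits in the preceding lemma. First, for each fixed $\mathbf{x}\in\mathbb R^n$, the non-negative numbers $u_j = h_j(\mathbf{x})$ are admissible inputs for Lemma \ref{lemA1}. That lemma therefore supplies a weight $w$ on $S_\nu$ (depending only on the exponents $m_j, M_j$, and in particular independent of $\mathbf{x}$) such that
$$ h_1(\mathbf{x})^{m_1} h_2(\mathbf{x})^{m_2}\cdots h_\nu(\mathbf{x})^{m_\nu} \le \sum_{\sigma\in S_\nu} w(\sigma)\, h_{\sigma(1)}(\mathbf{x})^{M_1} h_{\sigma(2)}(\mathbf{x})^{M_2}\cdots h_{\sigma(\nu)}(\mathbf{x})^{M_\nu}. $$

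Next, I would integrate this pointwise inequality against Lebesgue measure on $\mathbb R^n$. Since each $h_j$ is non-negative and measurable, the products appearing on both sides are measurable functions into $[0,\infty]$, and the finite sum over $\sigma\in S_\nu$ commutes with integration by linearity. This yields
$$ \int h_1^{m_1}h_2^{m_2}\cdots h_\nu^{m_\nu}\,\mathrm d\mathbf{x} \le \sum_{\sigma\in S_\nu} w(\sigma) \int h_{\sigma(1)}^{M_1}h_{\sigma(2)}^{M_2}\cdots h_{\sigma(\nu)}^{M_\nu}\,\mathrm d\mathbf{x}. $$
Because $w(\sigma)\ge 0$ and $\sum_{\sigma\in S_\nu} w(\sigma)=1$, the right-hand side is a convex combination of the $\nu!$ integrals $\int h_{\sigma(1)}^{M_1}\cdots h_{\sigma(\nu)}^{M_\nu}\,\mathrm d\mathbf{x}$, and therefore does not exceed their maximum. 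This is exactly the asserted bound.

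There is essentially no obstacle: Lemma \ref{lemA1} absorbs the combinatorial difficulty, and only a routine transfer from a pointwise numerical inequality to an inequality of integrals remains. The one bookkeeping point is that the integrals on either side may a priori equal $+\infty$, but since the pointwise inequality is valid in $[0,\infty]$, the integrated inequality is also valid in $[0,\infty]$ and no finiteness hypothesis on the $h_j$ is needed for the conclusion as stated.
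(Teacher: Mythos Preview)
Your proof is correct and follows exactly the same approach as the paper: apply Lemma \ref{lemA1} pointwise with $u_j=h_j(\mathbf{x})$, integrate, and use that the weights form a convex combination to pass to the maximum. The paper compresses this into a single line (``Choose $u_j=h_j$ in Lemma \ref{lemA1} for $1\le j\le \nu$ and integrate''), but your expanded version is the intended argument.
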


\begin{proof}
Choose $u_j=h_j$ in Lemma \ref{lemA1} for $1\le j\le \nu$ and integrate.
\end{proof}

For applications to systems of diagonal equations or inequalities, functions $h_j$ come with 
an equivalence relation between them. This we encode as a partition of the set of indices 
$j$ in the final lemma of this section.

\begin{lemma}\label{lemA3}
Suppose that the exponents $m_j$, $M_j$ $(1\le j\le \nu)$ satisfy \eqref{Hyp1} and 
\eqref{Hyp2}. Let $s=m_1+m_2+\ldots +m_\nu$, and for $1\le j\le s$, let 
$h_j:\mathbb R^n\to [0,\infty)$ denote a Lebesgue measurable function. Finally, suppose 
that $J_1,J_2,\ldots ,J_\nu$ are sets with respective cardinalities $m_1,m_2,\ldots ,m_\nu$ 
that partition $\{1,2,\ldots,s\}$. Then, there exists a tuple $(i_1,\ldots ,i_\nu)$ and a 
permutation $\sigma\in S_\nu$, with $i_l\in J_{\sigma (l)}$ $(1\le l\le \nu)$, having the 
property that 
\begin{equation}\label{A7}
\int h_1h_2 \cdots h_s\,\mathrm d \mathbf{x} \le \int h_{i_1}^{M_1}h_{i_2}^{M_2}\ldots 
h_{i_\nu}^{M_\nu} \,\mathrm d\mathbf{x}. 
\end{equation}
\end{lemma}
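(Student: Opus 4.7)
The plan is to combine the elementary inequality \eqref{T} with a simple averaging argument in order to reduce the assertion to Lemma \ref{lemA2}, applied to a family of $\nu$ functions drawn one from each block $J_k$.

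First I apply \eqref{T} inside each block. Taking $r=m_k$ together with the values $\{h_j(\mathbf{x}):j\in J_k\}$, I obtain the pointwise bound
$$\prod_{j\in J_k}h_j(\mathbf{x}) \le \frac{1}{m_k}\sum_{j\in J_k}h_j(\mathbf{x})^{m_k}.$$
Multiplying the $\nu$ resulting inequalities and expanding the product of sums gives
$$\prod_{j=1}^s h_j(\mathbf{x})\le \frac{1}{m_1m_2\cdots m_\nu}\sum_{(i_1,\ldots,i_\nu)}\prod_{k=1}^\nu h_{i_k}(\mathbf{x})^{m_k},$$
where the outer sum ranges over precisely the $m_1m_2\cdots m_\nu$ tuples $(i_1,\ldots,i_\nu)$ with $i_k\in J_k$. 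Integrating over $\mathbf{x}\in\mathbb{R}^n$ and observing that the averaging coefficient $1/(m_1\cdots m_\nu)$ matches the number of expanded terms, the maximum of the tuple-indexed integrals is at least their average; hence there exists a tuple $(i_1^*,\ldots,i_\nu^*)$ with $i_k^*\in J_k$ such that
$$\int h_1h_2\cdots h_s\,\mathrm d\mathbf{x} \le \int\prod_{k=1}^\nu h_{i_k^*}^{m_k}\,\mathrm d\mathbf{x}.$$

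Second, with the representatives $i_k^*$ now fixed, I apply Lemma \ref{lemA2} to the $\nu$ functions $g_k=h_{i_k^*}$ $(1\le k\le\nu)$, whose exponents $m_k$ and target exponents $M_k$ meet the hypotheses \eqref{Hyp1} and \eqref{Hyp2} by assumption. That lemma furnishes a permutation $\tau\in S_\nu$ satisfying
$$\int\prod_{k=1}^\nu g_k^{m_k}\,\mathrm d\mathbf{x}\le \int\prod_{k=1}^\nu g_{\tau(k)}^{M_k}\,\mathrm d\mathbf{x}.$$
Setting $\sigma=\tau$ and $i_k=i_{\sigma(k)}^*$ $(1\le k\le\nu)$, the containment $i_k\in J_{\sigma(k)}$ is automatic from the first step. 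Concatenating the two displayed bounds yields precisely \eqref{A7}.

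I do not foresee a genuine obstacle: the argument is little more than a concatenation of tools already developed in the section. The only point that needs care is the bookkeeping in the averaging step, where the factor $1/(m_1\cdots m_\nu)$ arising from $\nu$ applications of \eqref{T} must cancel exactly with the $m_1\cdots m_\nu$ expanded tuples, so that passing from average to maximum costs no constant. Once a single representative has been extracted from each block, Lemma \ref{lemA2} completes the rearrangement into the target profile $(M_1,\ldots,M_\nu)$ with no further loss.
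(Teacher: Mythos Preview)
Your proof is correct and follows essentially the same approach as the paper: apply \eqref{T} block by block, multiply and expand, integrate, pick a maximal summand, and then invoke Lemma~\ref{lemA2} to rearrange the exponents. The only differences are cosmetic (naming of indices and permutations); in particular, your observation that the coefficient $1/(m_1\cdots m_\nu)$ exactly cancels the number of tuples so that passing from average to maximum costs nothing is precisely the point the paper uses as well.
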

 
\begin{proof} 
For each suffix $l$ with $1\le l\le \nu$, it follows from \eqref{T} that
$$ \prod_{j\in J_l} h_j \le \frac1{m_j}\sum_{j\in J_l} h_j^{m_j}. $$
Multiplying these inequalities together yields the bound
$$ h_1h_2 \cdots h_s \le \frac{1}{m_1\cdots m_\nu} \sum_{j_1\in J_1} \cdots
\sum_{j_\nu\in J_\nu} h_{j_1}^{m_1}h_{j_2}^{m_2} \cdots h_{j_\nu}^{m_\nu}. $$
Now integrate. One then finds that there exists a tuple $(j_1,\ldots ,j_\nu)$, with $j_l\in J_l$ 
$(1\le l\le \nu)$, for which
$$ \int h_1h_2 \cdots h_s\,\mathrm d \mathbf{x} \le \int h_{j_1}^{m_1}h_{j_2}^{m_2}
 \ldots h_{j_\nu}^{m_\nu} \,\mathrm d\mathbf{x}. $$
Finally, we apply Lemma \ref{lemA2}. One then finds that for some $\sigma\in S_\nu$ the 
upper bound \eqref{A7} holds with $i_l=j_{\sigma(l)}$ $(1\le l\le \nu)$.
\end{proof}
  
\section{Smooth Farey dissections}
In this section we describe a partition of unity that mimics the traditional Farey dissection. 
With other applications in mind, we work in some generality. Throughout this section we 
take $X$ and $Y$ to be real numbers with $1\le Y\le \frac12 \sqrt X$, and then let 
$\mathfrak N(q,a)$ denote the interval of all real $\alpha$ satisfying 
$|q\alpha -a |\le YX^{-1}$. Define $\mathfrak N=\mathfrak N_{X,Y} $ as the union of all 
$\mathfrak N(q,a)$ with $1\le q\le Y$, $a\in\mathbb Z$ and $(a,q)=1$. Note that the 
intervals $\mathfrak N(q,a)$ comprising $\mathfrak N$ are pairwise disjoint. We also write 
$\mathfrak M=\mathfrak M_{X,Y}$ for the set $\mathfrak N\cap [0,1]$. For appropriate 
choices of the parameter Y, the latter is a typical choice of major arcs in applications of the 
Hardy-Littlewood method.\par

The set $\mathfrak N$ has period 1. Its indicator function ${\bf 1}_{\mathfrak N}$ has 
finitely many discontinuities in $[0,1)$, implying unwanted delicacies concerning the 
convergence of the Fourier series of ${\bf 1}_{\mathfrak N}$. We avoid complications 
associated with this feature by a familiar convolution trick, which we now describe. 

Define the positive real number
$$ \kappa = \int_{-1}^1 \exp(1/(t^2-1))\,\d t, $$
and the function $K : \mathbb R\to [0,\infty)$ by
$$ K(t) = \left\{\begin{array}{ll} \kappa^{-1} \exp(1/(t^2-1)) & \text{if } |t|<1, \\
0 & \text{if } |t|\ge 1.\end{array} \right.
$$
As is well known, the function $K(t)$ is smooth and even. We scale this function with the 
positive parameter $X$ in the form
$$ K_X(t)= 4X\, K(4Xt). $$
Then $ K_X$ is supported on the interval $|t|\le 1/(4X)$ and satisfies the important relation
\begin{equation}\label{2.0}
\int_{-\infty}^\infty K_X(t)\d t = \int_{-\infty}^\infty  K(t)\d t=1.
\end{equation}
We now define the function ${\sf N}_{X,Y}: \mathbb R\to [0,1]$ by
\begin{equation}\label{2.1}
{\sf N}_{X,Y}(\alpha) = \int_{-\infty}^\infty {\bf 1}_{\mathfrak N}(\alpha-t) K_X(t)\d t = 
\int_{-\infty}^\infty {\bf 1}_{\mathfrak N}(t) K_X(\alpha-t)\d t.
\end{equation}
The main properties of this function ${\sf N}={\sf N}_{X,Y}$ are listed in the next lemma.

\begin{lemma}\label{majapprox}
The function ${\sf N}={\sf N}_{X,Y}$ is smooth, and for all $\alpha\in\mathbb R$ one has 
${\sf N}(\alpha)\in [0,1]$. Further, whenever $2\le Y\le \frac14 \sqrt X$, the inequalities 
\begin{equation}\label{2.2}
{\bf 1}_{{\mathfrak N}_{X, Y/2}} (\alpha) \le {\sf N} (\alpha)\le  
{\bf 1}_{{\mathfrak N}_{X,2 Y}}(\alpha) 
\end{equation}
and
\begin{equation}\label{2.3}
{\sf N}'(\alpha) \ll X, \quad  {\sf N}''(\alpha) \ll X^2
\end{equation}
hold uniformly in $\alpha\in\mathbb R$.
\end{lemma}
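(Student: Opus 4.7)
The plan is to exploit standard properties of convolution, since ${\sf N}$ is the convolution of the bounded measurable function ${\bf 1}_{\mathfrak N}$ with the smooth, non-negative, compactly supported kernel $K_X$ of total mass $1$. Smoothness of ${\sf N}$ together with the derivative formulae
$$ {\sf N}^{(k)}(\alpha) = \int_{-\infty}^\infty {\bf 1}_{\mathfrak N}(t)\,K_X^{(k)}(\alpha-t)\,\mathrm dt \qquad (k\ge 0) $$
then follow by differentiation under the integral sign in the second representation of \eqref{2.1}. The containment ${\sf N}(\alpha)\in[0,1]$ is immediate from the first representation, which exhibits ${\sf N}(\alpha)$ as a weighted average---via the probability measure $K_X(t)\,\mathrm dt$ guaranteed by \eqref{2.0}---of the values of ${\bf 1}_{\mathfrak N}$ in $\{0,1\}$.

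For the inclusions \eqref{2.2} I expect to chase parameters through the triangle inequality. Contrapositively, for the upper bound, if ${\sf N}(\alpha)>0$, then $\alpha-t\in\mathfrak N$ for some $|t|\le 1/(4X)$, so $|q(\alpha-t)-a|\le Y/X$ with $1\le q\le Y$, $(a,q)=1$, and hence
$$ |q\alpha-a|\le Y/X+q/(4X)\le 5Y/(4X)\le 2Y/X. $$
Since $q\le Y\le 2Y$, this places $\alpha$ in $\mathfrak N_{X,2Y}$. Conversely, if $\alpha\in\mathfrak N_{X,Y/2}$, I pick the corresponding $(a,q)$ with $1\le q\le Y/2$ and $|q\alpha-a|\le Y/(2X)$; then for every $t$ in the support $[-1/(4X),1/(4X)]$ of $K_X$,
$$ |q(\alpha-t)-a|\le Y/(2X)+Y/(8X)=5Y/(8X)\le Y/X, $$
so $\alpha-t\in\mathfrak N(q,a)\subseteq\mathfrak N$ throughout the support of $K_X$, and \eqref{2.0} yields ${\sf N}(\alpha)=1$.

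Finally, the derivative estimates \eqref{2.3} reduce to the inequality $|{\sf N}^{(k)}(\alpha)|\le \|K_X^{(k)}\|_1$, which is transparent from the derivative formula above. Substituting $s=4Xt$ and invoking $K\in C_c^\infty(\mathbb R)$ gives
$$ \|K_X^{(k)}\|_1 = (4X)^k\|K^{(k)}\|_1 \ll X^k, $$
finishing both cases $k=1,2$. I do not anticipate any serious obstacle here; the only subtlety worth flagging is that the hypothesis $Y\le \frac14\sqrt X$ is used precisely to guarantee that the inflated arcs comprising $\mathfrak N_{X,2Y}$ remain pairwise disjoint, so that the indicator functions appearing in \eqref{2.2} are unambiguously defined.
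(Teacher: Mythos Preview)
Your proof is correct and follows essentially the same convolution argument as the paper's own proof: smoothness and the derivative bounds via differentiation under the integral sign, and \eqref{2.2} by triangle-inequality bookkeeping on the Farey arcs. One minor quibble on your closing remark: the indicator function of $\mathfrak N_{X,2Y}$ is unambiguously defined whether or not its constituent arcs are disjoint---it is simply the indicator of a union---so the hypothesis $Y\le\tfrac14\sqrt X$ merely keeps $2Y$ within the range for which the notation $\mathfrak N_{X,2Y}$ has been set up, and plays no actual role in the argument.
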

  
\begin{proof} The integrands in \eqref{2.1} are non-negative, so ${\sf N}(\alpha)\ge 0$, 
while \eqref{2.0} shows that ${\sf N}(\alpha)\le 1$. Since $K$ is smooth and compactly 
supported, the second integral formulation of ${\sf N}$ in \eqref{2.1} shows that 
$\sf N$ is smooth, and that the derivative is obtained by differentiating the integrand.  Thus, 
we obtain
$$ {\sf N}'(\alpha) = \int_{\mathfrak N} \frac{\partial}{\partial \alpha} K_X(\alpha-t)\d t, $$
whence
$$ |{\sf N}'(\alpha)| \le 4X \int_{-1}^1 |K'(t)|\d t. $$
This confirms the inequality 
for the first derivative in \eqref{2.3}. The bound for the second derivative follows in like manner by 
differentiating again.\par

We now turn to the task of establishing \eqref{2.2}. First suppose that 
$\alpha\in\mathfrak N_{X,Y/2}$. Then, there is a unique pair of integers $a\in\mathbb Z$ 
and $q\in\mathbb N$ with $(a,q)=1$, $q\le \frac12 Y$ and 
$|q\alpha-a|\le \frac12 YX^{-1}$. For $|t|\le (4X)^{-1}$ we then have
$$ \Big| (\alpha-t) - \frac{a}{q}\Big| \le\frac1{4X} + \frac{Y}{2qX} \le 
\frac{Y}{qX}. $$
Thus $\alpha-t \in\mathfrak N(q,a)\subseteq \mathfrak N_{X,Y}$. Since $K_X$ is supported 
on $[-1/(4X), 1/(4X)]$, we deduce from \eqref{2.0} and \eqref{2.1} that
$$ {\sf N}(\alpha) \ge \int_{-\infty}^\infty {\bf 1}_{\mathfrak N(q,a)}(\alpha-t)
K_X(t)\d t \ge \int_{-1}^1  K_X(t)\d t =1. $$
It follows that one has ${\sf N}(\alpha)=1$ for all $\alpha\in\mathfrak N_{X,Y/2}$. 
However, we know already that ${\sf N}(\alpha)$ is non-negative for all 
$\alpha\in\mathbb R$, and thus we have proved the first of the two inequalities in 
\eqref{2.2}.\par

We complete the proof of the lemma by addressing the second inequality in \eqref{2.2}. 
Suppose that ${\sf N}(\alpha)>0$. Then, it follows from \eqref{2.1} that for some 
$t\in \mathbb R$ with $|t|\le (4X)^{-1}$, one has 
$\alpha-t\in \mathfrak N_{X,Y}$. Hence, there exist $a\in\mathbb Z$ and $q\in\mathbb N$ 
with $(a,q)=1$, $q\le Y$ and $|\alpha-t-a/q|\le Y/(qX)$. By the triangle inequality, 
$$ \Big|\alpha-\frac{a}{q}\Big|\le \frac{Y}{qX} + \frac{1}{4X} \le \frac{2Y}{qX}.$$
This shows that $\alpha\in{\mathfrak N}_{X,2Y}$. Since $0\le {\sf N}(\alpha)\le 1$, the 
second of the inequalities in \eqref{2.2} also follows.
\end{proof}

We consider ${\sf N}= {\sf N}_{X,Y}$ as a smooth model of the major arcs 
$\mathfrak N_{X,Y}$. It is convenient to define corresponding minor arcs 
$\mathfrak n = \mathfrak n_{X,Y}$, with 
$\mathfrak n_{X,Y}= \mathbb R\setminus \mathfrak N_{X,Y}$, and to write 
$\mathfrak m = [0,1]\setminus \mathfrak M$ for the set of minor arcs complementary to 
$\mathfrak M$. The smoothed version of $\mathfrak n_{X,Y}$ is the function 
${\sf n}_{X,Y}:\mathbb R\to [0,1]$ defined by
$$ {\sf n}(\alpha) = \int_{-\infty}^\infty {\bf 1}_{\mathfrak n}(\alpha-t) K_X(t)\d t. $$
We trivially have ${\bf 1}_{\mathfrak N}(\alpha)+{\bf 1}_{\mathfrak n}(\alpha)=1$ for all 
$\alpha\in \mathbb R$, so it is a consequence of \eqref{2.0} and \eqref{2.1} that 
${\sf n}={\sf n}_{X,Y}$ satisfies the identity
\begin{equation}
\label{2.4} {\sf N}(\alpha)+{\sf n}(\alpha)=1.
\end{equation}
The properties of $\sf n$ can therefore be deduced from the corresponding facts 
concerning $\sf N$. In particular, Lemma \ref{majapprox} translates as follows.

\begin{lemma}\label{minapprox}
The function ${\sf n}={\sf n}_{X,Y}$ is smooth, and for all $\alpha\in\mathbb R$ one has 
${\sf n}(\alpha)\in [0,1]$. Further, whenever $2\le Y\le \frac14 \sqrt X$, the inequalities 
\[
{\bf 1}_{{\mathfrak n}_{X, 2Y}} (\alpha) \le {\sf n} (\alpha)\le  
{\bf 1}_{{\mathfrak n}_{X, Y/2}}(\alpha) 
\]
and
\[
{\sf n}'(\alpha) \ll X, \quad  {\sf n}''(\alpha) \ll X^2
\]
hold uniformly in $\alpha\in\mathbb R$.
\end{lemma}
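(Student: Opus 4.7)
The plan is to deduce Lemma \ref{minapprox} directly from Lemma \ref{majapprox} via the complementarity identity \eqref{2.4}. Since ${\sf n}(\alpha) = 1 - {\sf N}(\alpha)$ holds pointwise on $\mathbb{R}$, each of the four assertions about ${\sf n}$ translates immediately into a corresponding assertion already proved for ${\sf N}$.

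First I would argue smoothness and the range: because ${\sf N}$ is smooth with $0 \le {\sf N}(\alpha) \le 1$ by Lemma \ref{majapprox}, the function ${\sf n} = 1 - {\sf N}$ is also smooth and satisfies $0 \le {\sf n}(\alpha) \le 1$. Next I would handle the derivatives: differentiating \eqref{2.4} gives ${\sf n}'(\alpha) = -{\sf N}'(\alpha)$ and ${\sf n}''(\alpha) = -{\sf N}''(\alpha)$, so the bounds ${\sf N}'(\alpha) \ll X$ and ${\sf N}''(\alpha) \ll X^2$ from \eqref{2.3} transfer verbatim to ${\sf n}$.

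For the sandwich bound, I would use the trivial identity ${\bf 1}_{\mathfrak{N}_{X,Z}}(\alpha) + {\bf 1}_{\mathfrak{n}_{X,Z}}(\alpha) = 1$ valid for any $Z \ge 1$, since $\mathfrak{n}_{X,Z}$ is by definition the complement of $\mathfrak{N}_{X,Z}$ in $\mathbb{R}$. Applying this with $Z = Y/2$ and $Z = 2Y$ in \eqref{2.2} and subtracting from $1$ reverses the inequalities: the upper bound ${\sf N}(\alpha) \le {\bf 1}_{\mathfrak{N}_{X,2Y}}(\alpha)$ becomes the lower bound ${\bf 1}_{\mathfrak{n}_{X,2Y}}(\alpha) \le {\sf n}(\alpha)$, while the lower bound ${\bf 1}_{\mathfrak{N}_{X,Y/2}}(\alpha) \le {\sf N}(\alpha)$ becomes the upper bound ${\sf n}(\alpha) \le {\bf 1}_{\mathfrak{n}_{X,Y/2}}(\alpha)$.

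There is no real obstacle here; the entire content of Lemma \ref{minapprox} is a mechanical consequence of \eqref{2.4} and Lemma \ref{majapprox}. The only point worth flagging is the sign flip in the indicator inequalities caused by passing to complements, which swaps the roles of the parameters $Y/2$ and $2Y$ on the two sides of the sandwich.
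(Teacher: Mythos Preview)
Your proposal is correct and matches the paper's approach exactly: the paper does not even write out a separate proof, stating only that the properties of ${\sf n}$ can be deduced from the corresponding facts concerning ${\sf N}$ via the identity \eqref{2.4}. Your explicit verification of the sign flip in the indicator inequalities is precisely the one-line check the paper leaves to the reader.
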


\section{Fractional powers of Weyl sums}
In this section we consider a trigonometric polynomial
\begin{equation}
\label{3.1} T(\alpha) = \sum_{M<n\le M+N} c_n e(\alpha n)
\end{equation}  
with complex coefficients $c_n$. The associated ordinary polynomial
\begin{equation}
\label{3.2} P(z) = \sum_{n=1}^N c_{M+n} z^n
\end{equation} 
is related to $T$ via the identity
\begin{equation}
\label{TT}
T(\alpha) = e(M\alpha)P(e(\alpha)).
\end{equation}

\begin{lemma} \label{lem3.1}
Let $k\in\mathbb N$. Then, for any real number $u>k$, the real function 
$\Omega_u:\mathbb R \to \mathbb R$, defined by $\Omega_u(\alpha)=|T(\alpha)|^u$, is 
$k$ times continuously differentiable.
\end{lemma}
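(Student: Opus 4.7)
The plan is to reduce to local analysis at each point, where the only real issue is at the zeros of $T$. Away from the zeros of $T$, the function $|T(\alpha)|^2 = T(\alpha)\overline{T(\alpha)}$ is a smooth positive function of $\alpha$, so $|T|^u = (T\bar T)^{u/2}$ is smooth in a neighbourhood of any $\alpha$ at which $T(\alpha)\neq 0$. It therefore suffices to establish $C^k$ regularity in a neighbourhood of each $\alpha_0\in\mathbb R$ with $T(\alpha_0)=0$. Since $T$ has period $1$ and is a nontrivial trigonometric polynomial (the case $T\equiv 0$ being trivial), the identity \eqref{TT} shows that such zeros correspond to zeros of $P$ on the unit circle, of which there are only finitely many.

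Fix such an $\alpha_0$, and let $m\ge 1$ be the multiplicity of $z_0=e(\alpha_0)$ as a root of $P$. Factor $P(z)=(z-z_0)^m Q(z)$ with $Q(z_0)\neq 0$. The identity $e(\theta)-1=\theta \phi(\theta)$, where $\phi$ is entire with $\phi(0)=2\pi i\neq 0$, gives
\[
e(\alpha)-e(\alpha_0)=e(\alpha_0)(\alpha-\alpha_0)\phi(\alpha-\alpha_0).
\]
Substituting into \eqref{TT} one obtains $T(\alpha)=(\alpha-\alpha_0)^m G(\alpha)$, where
\[
G(\alpha)=e(M\alpha)\,e(\alpha_0)^m\phi(\alpha-\alpha_0)^m Q(e(\alpha))
\]
is a smooth function with $G(\alpha_0)\neq 0$. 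By continuity, $G$ remains non-vanishing in an open neighbourhood $\mathcal U$ of $\alpha_0$, and therefore $|G(\alpha)|^u=(G(\alpha)\overline{G(\alpha)})^{u/2}$ is smooth throughout $\mathcal U$. Consequently
\[
\Omega_u(\alpha)=|\alpha-\alpha_0|^{mu}\,|G(\alpha)|^u \qquad (\alpha\in\mathcal U),
\]
so the problem is reduced to verifying that $|\alpha-\alpha_0|^{mu}\in C^k$.

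For this last point, one computes that for $x\neq 0$ and $j\ge 1$ the $j$-th derivative of $|x|^{\beta}$ equals $c_j|x|^{\beta-j}\operatorname{sgn}(x)^j$ for some constant $c_j$ depending on $\beta$ and $j$. Whenever $\beta>k$, each of these expressions for $1\le j\le k$ tends to $0$ as $x\to 0$, so extending by $0$ at the origin produces continuous functions and a genuine $C^k$ extension of $|x|^\beta$. Applying this with $\beta=mu$, and using $m\ge 1$ together with the hypothesis $u>k$, we get $mu>k$ and the required $C^k$ regularity of $|\alpha-\alpha_0|^{mu}$; multiplication by the smooth factor $|G|^u$ preserves $C^k$ regularity. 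The main (very mild) obstacle is bookkeeping at the zeros: isolating the exact order of vanishing and verifying that the remaining factor is a smooth non-vanishing function so that all non-smoothness is concentrated in the explicit power of $|\alpha-\alpha_0|$.
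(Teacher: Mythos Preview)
Your argument is correct and follows essentially the same route as the paper: smoothness away from the zero set is immediate, and at a zero $\alpha_0$ one factors $P(z)=(z-z_0)^mQ(z)$ to reduce to the $C^k$ regularity of $|x|^\beta$ with $\beta=mu>k$. The only cosmetic difference is that the paper routes the local factorisation through $|e(\alpha)-e(\alpha_0)|=|2\sin\pi(\alpha-\alpha_0)|$ and proves an intermediate claim that $|\sin\pi\alpha|^u\in C^k$, whereas you absorb the non-vanishing analytic factor $\phi(\alpha-\alpha_0)$ directly into $G$ and pass straight to $|\alpha-\alpha_0|^{mu}$; your version is slightly more streamlined but the content is the same.
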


\begin{proof}
In view of \eqref{TT}, we see that it suffices to prove this result in the special case where 
$M=0$. This reduction step noted, we proceed by a succession of elementary exercises.\par

Let $u\in\mathbb R$. We begin by considering the function 
$\theta_u : \mathbb R\setminus\{0\} \to \mathbb R$ defined by 
$\theta_u(\alpha)=|\alpha|^u$. This function is differentiable on 
$ \mathbb R\setminus\{0\}$, and one has
$$ \theta'_u(\alpha) = u|\alpha|^u\alpha^{-1} =u\theta_u(\alpha)\alpha^{-1}.$$
By induction, it follows that for any $l\in\mathbb N$ the function $\theta_u$ is $l$ times 
differentiable, and that the $l$-th derivative is
\begin{equation}
\label{3.3} \theta_u^{(l)}(\alpha) = u(u-1)\cdots (u-l+1)\theta_u(\alpha)\alpha^{-l}. 
\end{equation} 
Now suppose that $u>0$. Then, by putting $\theta_u(0)=0$ we extend $\theta_u$ to a 
continuous function on $\mathbb R$. More generally, whenever $u>l$, then
$$ \lim_{\alpha\to 0} \frac{\theta_u(\alpha)}{\alpha^l}  = 0.$$
By \eqref{3.3}, this shows that whenever $u>l$ then $\theta_u^{(l)}$ extends to a 
continuous function on $\mathbb R$ by choosing $\theta_u^{(l)}(0)=0$, and that 
$\theta_u^{(l-1)}$ is differentiable at $0$ with derivative $0$. We summarize this last 
statement as follows:
\\[2ex]
(a) {\em Let $k\in\mathbb N$ and $u>k$. Then $\theta_u$ is $k$ times continuously 
differentiable on $\mathbb R$.}
\\

Next, for $u>0$, consider the function $\rho_u: \mathbb R\to \mathbb R$ defined by 
putting $\rho_u(\alpha)=|\sin \pi\alpha|^u$. For $\alpha\in (0,1)$ one has 
$\sin \pi \alpha >0$, whence $\rho_u(\alpha)=(\sin \pi \alpha)^u$. Thus $\rho_u$ is smooth 
on $(0,1)$. But $\rho$ has period $1$, so it suffices to examine its differentiability 
properties at $\alpha=0$, a point at which $\rho_u$ is continuous. For all real $\alpha$ we 
have $\sin \pi\alpha= \pi\alpha E(\alpha)$, where
$$ E(\alpha)= \sum_{j=0}^\infty (-1)^j \frac{(\pi\alpha)^{2j}}{(2j+1)!}. $$
The function $E$ is smooth on $\mathbb R$ with $E(0)=1$. Hence $E(\alpha)>0$ in a 
neighbourhood of $0$ where we then also have
$$ \rho_u(\alpha) = \pi^u |\alpha|^u E(\alpha)^u. $$
By applying the product rule in combination with our earlier conclusion (a), we therefore 
conclude as follows:
\\[2ex]
(b) {\em Let $k\in\mathbb N$ and $u>k$. Then $\rho_u$ is $k$ times continuously 
differentiable on} $\mathbb R$.
\\

We now turn to the function $T$ where we suppose that $M=0$, as we may. The sum in 
\eqref{3.1} defines a holomorphic function of the complex variable $\alpha$, and hence the 
function $T:\mathbb R\to\mathbb C$ is a smooth map of period $1$. The sum
$$ \bar T(\alpha)= \sum_{1\le n\le N} \bar c_n e(-\alpha n) $$
defines another trigonometric polynomial, and for $\alpha\in\mathbb R$ we have 
$\overline{T(\alpha)}= \bar T(\alpha)$. Consequently, for real $\alpha$ we have
\begin{equation}
\label{3.31} |T(\alpha)|^2 = T(\alpha)\bar T(\alpha),
\end{equation} 
whence the function $|T|^2:\mathbb R\rightarrow \mathbb C$, given by 
$\alpha\mapsto |T(\alpha)|^2$, is smooth on $\mathbb R$ with
\begin{equation}
\label{3.4} \frac{\mathrm d}{\mathrm d \alpha}\,|T(\alpha)|^2 = 
T'(\alpha)\bar T(\alpha) + T(\alpha)\bar T'(\alpha). 
\end{equation}
On noting that $T(\alpha)^j$ is again a trigonometric polynomial for all $j\in \mathbb N$, 
we see that $|T(\alpha)|^{2j}$ is smooth. Hence, from now on, we may suppose that $u$ 
{\em is a real number but not an even natural number}. Also, the conclusion of Lemma 
\ref{lem3.1} is certainly true in the trivial case where  $c_n=0$ for all $n$. In the contrary 
case, the polynomial in \eqref{3.2} has at most finitely many zeros. Therefore, the set
$$ Z= \{\alpha\in\mathbb R: T(\alpha)=0\} $$
is $1$-periodic with $Z\cap [0,1)$ finite, and consequently $\mathbb R\setminus Z$ is open.

\par We next examine the function $|T|^u:\mathbb R\setminus Z\rightarrow \mathbb C$, given by 
$\alpha\mapsto |T(\alpha)|^u$.  
\\[2ex]
(c) {\em When $u$ is real but not an even natural number, the function 
$|T|^u$ is smooth}. 
\\[2ex]
In order to confirm this assertion, note that $|T(\alpha)|^u=\theta_{u/2}(|T(\alpha)|^2)$. 
By applying the chain rule in combination with the preamble to conclusion (a) and 
\eqref{3.4}, we find that $|T(\alpha)|^u$ is differentiable for 
$\alpha\in\mathbb R \setminus Z$. Indeed,
\begin{align}
\frac{\mathrm d}{\mathrm d \alpha}\,|T(\alpha)|^u &=\theta'_{u/2}(|T(\alpha)|^2)
\big(T'(\alpha)\bar T(\alpha) + T(\alpha)\bar T'(\alpha)) \notag\\
& = \frac{u}{2}|T(\alpha)|^{u-2}  \big(T'(\alpha)\bar T(\alpha) + T(\alpha)\bar T'(\alpha)).
\label{3.41}
\end{align} 
Since the final factor on the right hand side here is smooth, we may repeatedly apply the 
product rule to conclude that $|T(\alpha)|^u$ is smooth on $\mathbb R\setminus Z$, as 
claimed.\medskip

Finally, we consider any element $\alpha_0\in Z$. Then one has $P(e(\alpha_0))=0$. Since 
$P$ is not the zero polynomial, there exists $r\in\mathbb N$ and a polynomial 
$Q\in \mathbb C[z]$ with $Q(e(\alpha_0))\neq 0$ such that $P(z)=(z-e(\alpha_0))^r Q(z)$. 
Write $U(\alpha)=Q(e(\alpha))$ for the trigonometric polynomial associated with $Q$. Then 
$T(\alpha)=\big(e(\alpha)-e(\alpha_0)\big)^r U(\alpha)$. For $u>0$ and all real $\alpha$ 
we then have
$$ |T(\alpha)|^u=|e(\alpha)-e(\alpha_0)|^{ru} |U(\alpha)|^u 
= |2\sin \pi(\alpha-\alpha_0)|^{ru} |U(\alpha)|^u. $$ 
There is an open neighbourhood of $\alpha_0$ on which $U(\alpha)$ does not vanish. By 
our conclusion (c) it is apparent that $|U(\alpha)|^u$ is smooth on this neighbourhood. If 
$u>k$, then the conclusion (b) implies that the function $|2\sin \pi(\alpha-\alpha_0)|^{ru}$ 
is $k$ times continuously differentiable. The conclusion of the lemma therefore follows by 
application of the product rule.
\end{proof}

We mention in passing that if more is known about the zeros of $P$, then the argument that 
we have presented shows more. For example, if all the zeros in $Z$ are double zeros and 
$u>k$, then $|T(\alpha)|^u$ is $2k$ times differentiable.

\begin{lemma}\label{lem3.2}
Let $W:\mathbb R \to \mathbb R$ be a twice continuously differentiable function of period 
$1$, and let $u\ge 2$. For $l\in\mathbb Z$ let
\begin{equation}\label{3.51}
b_l = \int_0^1 W(\alpha) |T(\alpha)|^u e(-\alpha l)\d\alpha.
\end{equation}
Then, for all $l\in\mathbb Z\setminus\{0\}$, one has
\begin{equation}
\label{3.5} |b_l|\le \frac{1}{(2\pi l)^2} \int_0^1 \Big| 
\frac{\mathrm d^2}{\mathrm d \alpha^2}\,W(\alpha) |T(\alpha)|^u\Big|\d\alpha.
\end{equation}
Moreover, for all $\alpha\in\mathbb R$ one has the Fourier series expansion
\begin{equation}\label{3.6}
  W(\alpha)|T(\alpha)|^u = \sum_{l\in \mathbb Z}b_le(\alpha l),
\end{equation}
in which the right hand side converges absolutely and uniformly on $\mathbb R$.
\end{lemma}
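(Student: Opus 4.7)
The plan is to obtain \eqref{3.5} by integration by parts twice and then bootstrap this decay bound to get the Fourier expansion \eqref{3.6}.

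First I would check that $G(\alpha):=W(\alpha)|T(\alpha)|^u$ is twice continuously differentiable on $\mathbb{R}$ and $1$-periodic. Periodicity is immediate since $W$ has period $1$ and $|T|$ does too. For the regularity, Lemma \ref{lem3.1} with $k=2$ shows that $|T|^u$ is $C^2$ whenever $u>2$; for the boundary case $u=2$, one uses $|T|^2=T\bar T$ (cf.\ \eqref{3.31}), which is a trigonometric polynomial and hence smooth. Combined with $W\in C^2$, the product rule then gives $G\in C^2(\mathbb{R})$.

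Next, I would fix $l\in\mathbb{Z}\setminus\{0\}$ and integrate by parts twice in the defining integral \eqref{3.51}. Writing $e(-\alpha l)=\frac{d}{d\alpha}\bigl(\frac{e(-\alpha l)}{-2\pi i l}\bigr)$ and integrating by parts, the boundary contribution $\bigl[G(\alpha)\frac{e(-\alpha l)}{-2\pi i l}\bigr]_0^1$ vanishes because $G$ has period $1$ and $e(-\alpha l)$ takes the same value at $\alpha=0$ and $\alpha=1$. This yields
\[
b_l=\frac{1}{2\pi i l}\int_0^1 G'(\alpha)\,e(-\alpha l)\,\d\alpha.
\]
Since $G'$ is itself $C^1$ and $1$-periodic, a second identical integration by parts produces
\[
b_l=\frac{1}{(2\pi i l)^2}\int_0^1 G''(\alpha)\,e(-\alpha l)\,\d\alpha,
\]
with no boundary term for the same reason. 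Taking absolute values inside the integral and noting $|e(-\alpha l)|=1$ delivers \eqref{3.5}.

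Finally, I would deduce \eqref{3.6} from \eqref{3.5}. The bound yields $|b_l|\le C\,l^{-2}$ uniformly in $l\ne 0$, where $C$ is the finite integral on the right-hand side of \eqref{3.5}. Since $\sum_{l\ne 0} l^{-2}<\infty$, the Weierstrass $M$-test applied to the functions $\alpha\mapsto b_l\,e(\alpha l)$ on $\mathbb{R}$ shows that the series $\sum_{l\in\mathbb{Z}}b_l\,e(\alpha l)$ converges absolutely and uniformly to some continuous $1$-periodic function $\widetilde G$. The partial sums being continuous and uniformly convergent, one may interchange sum and integral to verify that the Fourier coefficients of $\widetilde G$ equal those of $G$; since $G$ is continuous, the standard uniqueness theorem for Fourier series (any continuous $1$-periodic function with absolutely summable Fourier coefficients is pointwise equal to its Fourier series) gives $G(\alpha)=\widetilde G(\alpha)$ for every $\alpha\in\mathbb{R}$, which is \eqref{3.6}.

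There is no real obstacle here beyond justifying the regularity of $|T|^u$ at the boundary exponent $u=2$ and the points where $T$ vanishes, both of which are handled by Lemma \ref{lem3.1} and the smooth identity $|T|^2=T\bar T$. Once $G\in C^2$ is in hand, the remainder is a textbook Fourier-analytic argument.
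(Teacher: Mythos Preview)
Your proposal is correct and follows essentially the same route as the paper: establish that $W|T|^u$ is $C^2$ via Lemma~\ref{lem3.1} (together with \eqref{3.31} for the endpoint $u=2$), integrate by parts twice to obtain \eqref{3.5}, and then use the resulting $l^{-2}$ decay to conclude absolute and uniform convergence of the Fourier series to the original function. The only cosmetic difference is that the paper cites \cite[Chapter~II, Theorem~8.14]{Z} for the last step, whereas you spell out the M-test and uniqueness argument explicitly.
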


\begin{proof}
By \eqref{3.31} and Lemma \ref{lem3.1}, the condition $u\ge 2$ ensures that 
$W(\alpha)|T(\alpha)|^u$ is twice continuously differentiable. Hence, the integral on the 
right hand side of \eqref{3.5} exists, and the upper bound \eqref{3.5} follows from 
\eqref{3.51} by integrating by parts two times. Furthermore, the upper bound \eqref{3.5} 
ensures that the series in \eqref{3.6} converges absolutely and uniformly on $\mathbb R$. 
Thus, by \cite[Chapter II, Theorem 8.14]{Z}, this Fourier series sums to 
$W(\alpha)|T(\alpha)|^u$.
\end{proof}

In this paper Lemmata \ref{lem3.1} and \ref{lem3.2} will only be used with the quartic Weyl 
sum $f$, as defined in \eqref{ff}, in the role of $T$. The weight $W$ will be either 
constantly $1$ or a smooth minor arc. Let $u>0$ and define the Fourier coefficient
\begin{equation}\label{3.8}
\psi_u(n) = \int_0^1 |f(\alpha)|^u e(-\alpha n) \d\alpha.
\end{equation} 
Also, with a parameter $Y$ at our disposal within the range $1\le Y\le \frac14 P^2$, we 
consider the smooth minor arcs ${\sf n}(\alpha )={\sf n}_{P^4,Y}(\alpha)$ and introduce 
the related Fourier coefficient 
\begin{equation}\label{3.9}
\phi_u(n) = \int_0^1 {\sf n}(\alpha)|f(\alpha)|^u e(-\alpha n) \d\alpha.
\end{equation}

\begin{lemma} \label{lem3.3}
Suppose that $u\ge 2$ and $1\le Y\le \tfrac{1}{4}P^2$. Then, for all 
$n\in\mathbb Z\setminus\{0\}$, one has
$$ |\phi_u(n)|+|\psi_u(n)|\ll P^{u+8}n^{-2}. $$
\end{lemma}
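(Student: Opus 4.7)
The plan is to apply Lemma \ref{lem3.2} with $T = f$ in two incarnations: once with weight $W \equiv 1$ to bound $\psi_u(n)$, and once with weight $W(\alpha) = {\sf n}_{P^4,Y}(\alpha)$ to bound $\phi_u(n)$. In both cases the task reduces to showing
$$ \int_0^1 \Big| \frac{\mathrm d^2}{\mathrm d\alpha^2}\big( W(\alpha) |f(\alpha)|^u\big)\Big| \mathrm d\alpha \ll P^{u+8}, $$
after which the factor $(2\pi n)^{-2}$ supplied by Lemma \ref{lem3.2} yields the claimed bound. The hypothesis $u\ge 2$ is what makes Lemma \ref{lem3.2} available (for $u=2$ the function $|f|^2 = f\bar f$ is smooth, while for $u > 2$ we appeal directly to Lemma \ref{lem3.1}), and the range $1\le Y\le \tfrac14 P^2$ is precisely what is needed to invoke Lemma \ref{minapprox} with $X=P^4$ (with the trivial case $Y<2$ handled separately, since then ${\mathfrak N}_{X,Y}$ is essentially a single arc and the same smoothing estimates persist).

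For the pointwise bounds on derivatives I will use the trivial estimates
$$ |f(\alpha)| \le P, \qquad |f'(\alpha)| \le 2\pi\!\!\sum_{x\le P} x^4 \ll P^5, \qquad |f''(\alpha)| \le (2\pi)^2\!\!\sum_{x\le P} x^8 \ll P^9. $$
Starting from (3.41), a further differentiation (and the identity $\tfrac{\mathrm d}{\mathrm d\alpha}|f|^{u-2} = \tfrac{u-2}{2}|f|^{u-4}(f'\bar f + f\bar f')$) yields
$$ \Big|\frac{\mathrm d^2}{\mathrm d\alpha^2}|f(\alpha)|^u\Big| \ll_u |f(\alpha)|^{u-2}|f'(\alpha)|^2 + |f(\alpha)|^{u-1}|f''(\alpha)|, $$
and each of the two terms on the right is $\ll P^{u-2}\cdot P^{10} = P^{u+8}$ or $\ll P^{u-1}\cdot P^9 = P^{u+8}$. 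Integrating over $[0,1]$ gives the required bound for $\psi_u(n)$.

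For $\phi_u(n)$ the additional weight is ${\sf n}={\sf n}_{P^4,Y}$, which by Lemma \ref{minapprox} is smooth with ${\sf n}(\alpha)\ll 1$, ${\sf n}'(\alpha)\ll P^4$, and ${\sf n}''(\alpha)\ll P^8$. Expanding via the product rule,
$$ \frac{\mathrm d^2}{\mathrm d\alpha^2}\!\big({\sf n}|f|^u\big) = {\sf n}'' |f|^u + 2{\sf n}'\frac{\mathrm d}{\mathrm d\alpha}|f|^u + {\sf n}\frac{\mathrm d^2}{\mathrm d\alpha^2}|f|^u, $$
and the three contributions are bounded pointwise by $P^8\cdot P^u$, $P^4 \cdot u|f|^{u-1}|f'| \ll P^4\cdot P^{u-1}\cdot P^5$, and $P^{u+8}$ respectively, each of which is $\ll P^{u+8}$. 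Integration and Lemma \ref{lem3.2} then give $|\phi_u(n)|\ll P^{u+8}n^{-2}$, completing the proof.

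The only mildly delicate step is the verification that $\tfrac{\mathrm d^2}{\mathrm d\alpha^2}|f|^u$ is controlled by the elementary expression above uniformly on $\mathbb R$, including near zeros of $f$; but this is precisely the content of Lemma \ref{lem3.1}, which guarantees the formal differentiation is legitimate and allows passage to absolute values through the finite set $Z$ where $f$ vanishes. Everything else is size-counting with the trivial Weyl-sum bounds.
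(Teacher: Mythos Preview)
Your proof is correct and follows essentially the same route as the paper: compute the first and second derivatives of $|f|^u$ from the formula \eqref{3.41}, bound them pointwise using the trivial estimates $f\ll P$, $f'\ll P^5$, $f''\ll P^9$, then feed the resulting bound $\tfrac{\mathrm d^2}{\mathrm d\alpha^2}|f|^u\ll P^{u+8}$ into Lemma~\ref{lem3.2}; for $\phi_u$ one combines this with the product rule and the derivative bounds ${\sf n}'\ll P^4$, ${\sf n}''\ll P^8$ from Lemma~\ref{minapprox}. You are in fact slightly more careful than the paper in flagging the edge case $1\le Y<2$ (Lemma~\ref{minapprox} is stated only for $Y\ge 2$, though its derivative bounds do not actually use that restriction).
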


\begin{proof}
We first compute the derivatives of $|f(\alpha)|^u$. Suppose temporarily that $u$ is not an 
even natural number. By \eqref{3.41}, whenever $f(\alpha)\neq 0$, we have
$$ \frac{\mathrm d}{\mathrm d \alpha} \, |f(\alpha)|^u =
\frac{u}{2}\, |f(\alpha)|^{u-2}\left( f'(\alpha)\bar f(\alpha)+f(\alpha)\bar f'(\alpha)\right),$$
and we may differentiate again to confirm the identity
\begin{align*}
\frac{\mathrm d^2}{\mathrm d \alpha^2} \, |f(\alpha)|^u=&\frac{u(u-2)}4\,
|f(\alpha)|^{u-4}\big(f'(\alpha)\bar f(\alpha)+f(\alpha)\bar f'(\alpha)\big)^2\\ 
& + \frac{u}{2}\, |f(\alpha)|^{u-2} \big(f''(\alpha)\bar f(\alpha)
+2 f'(\alpha)\bar f'(\alpha) + f(\alpha)\bar f''(\alpha)\big).
\end{align*}
These formulae hold for all $\alpha \in \mathbb R$ when $u$ is an even natural number, and 
thus
$$ \Big| \frac{\mathrm d}{\mathrm d \alpha} \, |f(\alpha)|^u \Big|
\le u |f(\alpha)|^{u-1} |f'(\alpha)| $$
and
$$ \Big|\frac{\mathrm d^2}{\mathrm d \alpha^2} \, |f(\alpha)|^u \Big|
\le u(u-1) |f(\alpha)|^{u-2} |f'(\alpha)|^2 + u |f(\alpha)|^{u-1} |f''(\alpha)|.$$
Hence, the trivial estimates $f(\alpha)\ll P$, $f'(\alpha)\ll P^5$ and $f''(\alpha)\ll P^9$ 
suffice to conclude that the upper bounds
\begin{equation}\label{3.10}
\frac{\mathrm d}{\mathrm d \alpha} \, |f(\alpha)|^u \ll P^{u+4}\quad \text{and}\quad 
\frac{\mathrm d^2}{\mathrm d \alpha^2} \, |f(\alpha)|^u \ll P^{u+8}
\end{equation}  
hold for all $\alpha\in\mathbb R$ when either $u=2$ or $f(\alpha)\neq 0$. However, when 
$u>2$ these derivatives will be zero whenever $f(\alpha)=0$, so the inequalities 
\eqref{3.10} hold uniformly in $\alpha\in\mathbb R$. The upper bound 
$\psi_u(n)\ll P^{u+8}n^{-2}$ is now immediate from Lemma \ref{lem3.2}. Furthermore,  an application of the product rule in combination with Lemma 
\ref{minapprox} and \eqref{3.10} shows that
$$\frac{\mathrm d}{\mathrm d \alpha} \,{\sf n}(\alpha) |f(\alpha)|^u \ll P^{u+4}\quad 
\text{and}\quad \frac{\mathrm d^2}{\mathrm d \alpha^2} \, {\sf n}(\alpha)|f(\alpha)|^u 
\ll P^{u+8}.$$
The estimate $\phi_u(n)\ll P^{u+8}n^{-2}$ therefore follows by invoking Lemma 
\ref{lem3.2} once again, and this completes the proof of the lemma.
\end{proof}

\section{Cubic moments of Fourier coefficients}
The principal results in this section are the upper bounds for cubic moments of $\phi_u(n)$ 
and $\psi_u(n)$ embodied in Theorem \ref{thm4.1} below. The proof of these estimates 
involves a development of the ideas underpinning the main line of thought in our earlier 
paper \cite{Jems1}. For $u>0$ it is convenient to define
\begin{equation}\label{4.1}
\delta(u)=(25-3u)/6.
\end{equation}
In many of the computations later it is useful to note that
\begin{equation}\label{4.13}
3u-8+\delta(u)= \frac52 u -\frac{23}6.
\end{equation}

\begin{theorem}\label{thm4.1}
Let $u$ be a real number with $6\le u\le 25/3$. Then
\begin{equation}
\label{4.2} \sum_{n\in \mathbb Z}|\psi_u(n)|^3 \ll P^{3u-8+\delta(u)+\varepsilon}.
\end{equation}
Further, when $2P^{4/15}\le Y\le P/16$ and $6\le u\le 11$, one has
\begin{equation}\label{4.3}
\sum_{n\in \mathbb Z}|\phi_u(n)|^3 \ll P^{3u-8+\delta(u)+\varepsilon}.
\end{equation}
\end{theorem}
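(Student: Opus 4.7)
The plan is to prove both (4.2) and (4.3) via the Fourier-analytic identity (3.3) from the introduction, applied to $h(\alpha)=|f(\alpha)|^u$ for (4.2) and to $h(\alpha)={\sf n}(\alpha)|f(\alpha)|^u$ for (4.3), with ${\sf n}$ the smoothed minor arc function of Section 4. For $u\ge 6>2$, Lemma \ref{lem3.1} makes $|f|^u$ twice continuously differentiable, and combined with the smoothness of ${\sf n}$ from Lemma \ref{minapprox} the weighted version is also $C^2$; Lemma \ref{lem3.2} therefore applies, and (3.3) identifies $\sum_n \hat h(n)^3$ with the triple integral $\iint h(\alpha)h(\beta)h(-\alpha-\beta)\,d\alpha\,d\beta$.

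I would first dispose of the case where $u$ is an even positive integer. In $[6,25/3]$ this covers $u\in\{6,8\}$, and in $[6,11]$ additionally $u=10$. For such $u$ the coefficients $\psi_u(n)$ count representations of $n$ by $\sum x_i^4-\sum y_i^4$ in $u$ variables and are therefore non-negative, so $\sum_n|\psi_u(n)|^3$ equals the triple integral from (3.3). This is a two-equation diagonal quartic mean value in $3u$ variables, which I would bound by redistributing factors of $|f|^u$ via H\"older's inequality and invoking the fourteenth-moment estimate (1.5) together with Hua-type bounds; the target exponent $(15u-23)/6+\varepsilon$ should emerge from balancing these ingredients. The analogous treatment for ${\sf n}|f|^u$ in (4.3) gains additionally from Weyl's inequality: by Lemma \ref{minapprox}, ${\sf n}(\alpha)\ne 0$ forces $\alpha\in\mathfrak n_{P^4,Y/2}$, on which $|f(\alpha)|\ll P^{1-\sigma+\varepsilon}$ for some $\sigma>0$ depending on $Y$, and the threshold $Y\ge 2P^{4/15}$ is precisely what extends the valid range of $u$ from $25/3$ to $11$.

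For real $u$ not an even integer, $\psi_u(n)$ is real (since $|f|^u$ is an even real function of $\alpha$) but may change sign, so (3.3) does not directly furnish $\sum|\psi_u(n)|^3$. The plan here is to combine Cauchy--Schwarz in the discrete variable, $\sum|\psi_u(n)|^3\le(\sum\psi_u(n)^2)^{1/2}(\sum\psi_u(n)^4)^{1/2}$, with Parseval ($\sum\psi_u(n)^2=\int|f|^{2u}$) and with the convolution identity $\sum\psi_u(n)^4=\iiint|f(\alpha)f(\beta)f(\gamma)f(\alpha+\beta-\gamma)|^u\,d\alpha\,d\beta\,d\gamma$, again controlled by H\"older and (1.5). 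An auxiliary device is the discrete Hausdorff--Young inequality $\sum|\psi_u(n)|^3\le(\int|f|^{3u/2})^2$ followed by moment interpolation, or else direct interpolation of the bound between the even-integer anchor points. The same line of argument applies to $\phi_u$ in (4.3), with $|f|$ replaced by ${\sf n}|f|$ and the additional Weyl gain above exploited throughout.

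The main obstacle will be the non-integer case at the required precision. The exponent margin $\delta(u)=(25-3u)/6$ vanishes at $u=25/3$ in (4.2) and is negative for $u>25/3$ in (4.3), so naive applications of Cauchy--Schwarz or Hausdorff--Young lose small but critical powers of $P$. A careful arithmetic balancing of the H\"older weights against (1.5), together in (4.3) with the Weyl input from the smoothed minor arc weight (whose boundary effects are jointly controlled by Lemmas \ref{minapprox} and \ref{lem3.3}), will be required to close the gap.
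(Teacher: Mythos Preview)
Your approach has a genuine gap: none of the tools you propose can reach the target exponent $\tfrac{5}{2}u-\tfrac{23}{6}$. The Hausdorff--Young route gives $\sum_n|\psi_u(n)|^3\le \bigl(\int_0^1|f|^{3u/2}\bigr)^2$; feeding in Lemma~\ref{lemma5.3} yields exponent $\tfrac{5}{2}u-\tfrac{10}{3}$, too large by $\tfrac{1}{2}$. Your even-integer strategy via the identity \eqref{3rd} leads to the entangled integral $I_u=\iint|f(\alpha)f(\beta)f(-\alpha-\beta)|^u$, but estimating this by H\"older and the moment bound \eqref{1.5} is exactly Cook's method (cf.\ \eqref{HY2}), and again loses $P^{1/2}$: for $u=6$ one gets $I_6\le (\int|f|^9)^2\ll P^{70/6+\varepsilon}$ against a target of $P^{67/6}$. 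The discrete Cauchy--Schwarz $\sum|\psi_u|^3\le(\sum\psi_u^2)^{1/2}(\sum\psi_u^4)^{1/2}$ fares no better, since the fourth moment brings in a three-equation system that you would again be forced to decouple by H\"older. Interpolation between even-integer anchors does not help either, because the anchors themselves miss the target. The paper explicitly discusses why these routes fall short in the passage following \eqref{HY2}.

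The paper's proof is entirely different: it is a large-values argument. One first uses Lemma~\ref{lem3.3} to discard $|n|>P^7$, then dyadically decomposes the remaining $n$ according to $T<|\vartheta_u(n)|\le 2T$, and for each level bounds $T^3Z$ (with $Z$ the size of the level set) via the dual exponential sum $K(\alpha)=\sum_{n\in\mathscr Z}\eta_n e(-\alpha n)$. The crucial arithmetic input, absent from your plan, is Lemma~\ref{lem4.3}, which bounds $\int_0^1|K|^2|f|^2$ and $\int_0^1|K|^2|f|^4$ with a nontrivial dependence on $Z$; combining these with Lemma~\ref{lemma5.3} through H\"older (and splitting into three ranges of $T$, with a major/minor arc decomposition for large $T$) recovers the missing $P^{1/2}$. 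This is where the method ``remembers the arithmetic source'' of the Fourier coefficients, and it is the idea your proposal lacks.
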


When $u\ge 6$, the contribution from the major arcs to the sum in \eqref{4.2} is easily seen 
to be of order $P^{3u-8}$. Since $\delta(u)$ is negative for $u>25/3$, we cannot 
expect that the upper bound \eqref{4.2} holds for such $u$. However, as is evident from 
\eqref{4.3}, a minor arcs version remains valid for $u\le 11$. Before we embark on the 
proof of this theorem, we summarize some mean value estimates related to the Weyl sum 
\eqref{ff}. In the following two lemmata, we assume that $1\le Y\le P/8$ and write 
$\mathfrak M=\mathfrak M_{P^4,Y}$ and $\mathfrak m=\mathfrak m_{P^4,Y}$. It is 
useful to note that $\mathfrak m_{P^4,Y}=\mathfrak m_{P^4,P/8}\cup\mathfrak K$, 
where $\mathfrak K = \mathfrak M_{P^4,P/8} \setminus \mathfrak M_{P^4,Y}$. Then, 
from \cite[Lemma 5.1]{V89}, we have the bounds
\begin{equation}\label{4.51}
\int_{\mathfrak M} |f(\alpha)|^{6}\d \alpha \ll P^{2}\quad \text{and}\quad 
\int_{\mathfrak K} |f(\alpha)|^{6}\d \alpha \ll P^{2}Y^{\varepsilon-1/4}. 
\end{equation}

\begin{lemma}\label{lem4.1}
Suppose that $P^{4/15}\le Y\le P/8$. Then
$$ \int_{\mathfrak m} |f(\alpha)|^{20}\d \alpha \ll P^{15+\varepsilon}. $$
\end{lemma}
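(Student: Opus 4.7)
The natural starting point is the decomposition $\mathfrak m_{P^4,Y}=\mathfrak m_{P^4,P/8}\cup\mathfrak K$ set up in the paragraph preceding the statement. I would estimate the two pieces on their own terms: $\mathfrak K$ via the major-arc pointwise bound together with \eqref{4.51}, and the classical minor-arc piece $\mathfrak m_{P^4,P/8}$ via Weyl's inequality together with \eqref{1.5}. Only the bound on $\mathfrak K$ uses the hypothesis $Y\ge P^{4/15}$; the bound on $\mathfrak m_{P^4,P/8}$ must be $Y$-independent.

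For $\mathfrak K$: any $\alpha\in\mathfrak K$ has a Dirichlet approximant $(a,q)$ with $q\le P/8$ and $|q\alpha-a|\le 1/(8P^3)$, yet lies outside $\mathfrak M_{P^4,Y}$. Taking this same $(a,q)$ as a witness, the quantity $T:=q+P^4|q\alpha-a|$ is forced to satisfy $T\ge Y$ (either $q>Y$ directly, or $q\le Y$ forces $P^4|q\alpha-a|>Y$). The classical major-arc pointwise estimate $|f(\alpha)|\ll PT^{-1/4+\varepsilon}$ then yields $\sup_{\mathfrak K}|f|^{14}\ll P^{14}Y^{-7/2+\varepsilon}$. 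Combining with the second inequality of \eqref{4.51} gives
\[
\int_{\mathfrak K}|f|^{20}\d\alpha \le \sup_{\mathfrak K}|f|^{14}\cdot\int_{\mathfrak K}|f|^{6}\d\alpha \ll P^{16}Y^{-15/4+\varepsilon},
\]
which is $\ll P^{15+\varepsilon}$ precisely when $Y^{15/4}\ge P$. The numerology here explains the exponent $4/15$ in the hypothesis.

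For $\mathfrak m_{P^4,P/8}$: Weyl's inequality gives $\sup|f|\ll P^{7/8+\varepsilon}$, and the direct combination with \eqref{1.5} yields only
\[
\int_{\mathfrak m_{P^4,P/8}}|f|^{20}\d\alpha \le \sup|f|^6\int_0^1|f|^{14}\d\alpha \ll P^{21/4+10+\varepsilon}=P^{61/4+\varepsilon},
\]
missing the target by a factor $P^{1/4}$. This is the main obstacle. To recover the saving I would subdivide the classical minor arcs by the size of the Dirichlet denominator $q$: on the portion with $q\le P^{3/2}$, the major-arc pointwise bound $|f|\ll Pq^{-1/4+\varepsilon}$ remains valid and, combined with $q>P/8$, gives $|f|\ll P^{3/4+\varepsilon}$; given the region's measure $\ll P^{-3/2}$, its contribution is only $P^{27/2+\varepsilon}$. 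The resistant regime is $q>P^{3/2}$, where Weyl is sharp. Here I would aim for a minor-arc sharpening of \eqref{1.5}, namely $\int_{\mathfrak m_{P^4,P/8}}|f|^{14}\d\alpha\ll P^{39/4+\varepsilon}$, obtained by absorbing the main term of the asymptotic for $\int_0^1|f|^{14}\d\alpha$ into the major arcs. Coupled with Weyl's sixth power, this $P^{-1/4}$ saving would deliver the required $P^{15+\varepsilon}$.
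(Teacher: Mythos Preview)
Your treatment of $\mathfrak K$ is correct and matches the paper exactly: the pointwise bound $f(\alpha)\ll PY^{-1/4}$ on $\mathfrak K$ combined with the second estimate in \eqref{4.51} gives $\int_{\mathfrak K}|f|^{20}\d\alpha\ll P^{16}Y^{-15/4+\varepsilon}$, and the hypothesis $Y\ge P^{4/15}$ is precisely what forces this below $P^{15+\varepsilon}$.

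The gap is in your handling of $\mathfrak m_{P^4,P/8}$. You correctly observe that Weyl's bound $\sup_{\mathfrak m}|f|\ll P^{7/8+\varepsilon}$ together with \eqref{1.5} only yields $P^{61/4+\varepsilon}$, missing the target by $P^{1/4}$. Your proposed rescue, however, does not work as stated. The claim that $\int_{\mathfrak m_{P^4,P/8}}|f|^{14}\d\alpha\ll P^{39/4+\varepsilon}$ can be ``obtained by absorbing the main term of the asymptotic for $\int_0^1|f|^{14}\d\alpha$ into the major arcs'' is circular: knowing that the full integral is $\ll P^{10+\varepsilon}$ and that the major arcs contribute $cP^{10}+O(P^{10-\delta'})$ tells you only that the minor arcs are $\ll P^{10+\varepsilon}$. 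To deduce a bound of the shape $P^{10-1/4+\varepsilon}$ you would need an asymptotic formula for the $14$th moment with an error term already of size $O(P^{39/4+\varepsilon})$, and establishing such an error term is precisely a minor arc estimate of the type you are trying to prove. No combination of \eqref{1.5}, Hua's lemma, and the classical Weyl bound closes this $P^{1/4}$ gap.

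The paper does not attempt to derive the bound on $\mathfrak m_{P^4,P/8}$ from these ingredients. It simply invokes the case $k=4$, $w=20$ of \cite[Lemma~3.1]{Woo2016}, which supplies $\int_{\mathfrak m_{P^4,P/8}}|f|^{20}\d\alpha\ll P^{15+\varepsilon}$ directly. That lemma rests on consequences of Vinogradov's mean value theorem going beyond the single moment bound \eqref{1.5}; it is genuinely additional input, not something recoverable from the tools you have listed.
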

\begin{proof}
For $Y=P/8$, the desired estimate is the case $k=4$, $w=20$ of Wooley 
\cite[Lemma 3.1]{Woo2016}. For smaller values of $Y$, we make use of the case 
$Y=P/8$ and apply the second bound of \eqref{4.51}. On combining 
\cite[Theorem 4.1]{hlm} with \cite[Lemma 2.8 and Theorem 4.2]{hlm}, moreover, one 
readily confirms that the upper bound $f(\alpha)\ll PY^{-1/4}$ holds uniformly  for 
$\alpha\in\mathfrak K$. Consequently, one has the estimate
$$ \int_{\mathfrak K} |f(\alpha)|^{20}\d \alpha \ll P^{16}Y^{\varepsilon-15/4},$$
and the conclusion of the lemma follows. 
\end{proof}

\begin{lemma}\label{lemma5.3}
When $8\le u\le 14$, one has
\begin{equation}\label{4.4}  
\int_0^1 |f(\alpha)|^u\d \alpha \ll P^{\frac56 u-\frac53+\varepsilon}. 
\end{equation}
Meanwhile, when $8\le u\le 20$, then uniformly in $P^{4/15}\le Y\le P/8$, one has
\begin{equation}\label{4.5}  
\int_{\mathfrak m} |f(\alpha)|^u\d \alpha \ll P^{\frac56 u-\frac53+\varepsilon}. 
\end{equation}
\end{lemma}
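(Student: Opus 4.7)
The plan is to derive both bounds by Hölder interpolation between endpoint estimates that are either at hand or classical. The target exponent $\tfrac{5}{6}u - \tfrac{5}{3}$ is linear in $u$, so convex combinations of endpoint exponents in the logarithmic scale reproduce it exactly; the task reduces to choosing the right endpoints and checking the arithmetic.

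For \eqref{4.4} the endpoints I would use are $u = 8$ and $u = 14$. The upper endpoint is precisely the hypothesis \eqref{1.5} already singled out in the introduction, and delivers the target exponent $10$ at $u = 14$. The lower endpoint $\int_0^1 |f(\alpha)|^8 \d\alpha \ll P^{5 + \varepsilon}$ is Hua's eighth-moment bound for the quartic Weyl sum, a standard consequence of three rounds of Weyl differencing (the case $k = 3$ of the quartic Hua inequality), and matches the target exponent $5$ at $u = 8$. For $u \in [8, 14]$ I would write $u = 8(1-t) + 14 t$ with $t = (u-8)/6$ and split via Hölder to obtain
$$\int_0^1 |f(\alpha)|^u \d\alpha \le \biggl(\int_0^1 |f(\alpha)|^8 \d\alpha \biggr)^{1-t}\biggl(\int_0^1 |f(\alpha)|^{14} \d\alpha \biggr)^{t} \ll P^{5(1-t) + 10t + \varepsilon},$$
and a direct check identifies $5 + 5t$ with $\tfrac{5}{6}u - \tfrac{5}{3}$.

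For \eqref{4.5} I would split the range $[8, 20]$ at $u = 14$. On the subrange $8 \le u \le 14$ the desired bound is immediate from \eqref{4.4}, since $\mathfrak{m} \subseteq [0, 1]$. On the subrange $14 \le u \le 20$ the left endpoint is again \eqref{1.5} (applied trivially to the minor arc integral), while the right endpoint is supplied by Lemma \ref{lem4.1}; a parallel Hölder interpolation with $u = 14(1-t) + 20t$ and $t = (u-14)/6$ produces exponent $10 + 5t = \tfrac{5}{6}u - \tfrac{5}{3}$. The uniformity in $Y \in [P^{4/15}, P/8]$ is inherited from Lemma \ref{lem4.1}, while \eqref{1.5} is $Y$-independent. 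There is no substantive obstacle here: the entire proof reduces to two applications of Hölder's inequality pivoting on the central exponent $u = 14$, with consistency at the meeting point guaranteed since both interpolations yield exponent $10$ there.
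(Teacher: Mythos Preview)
Your proof is correct. The paper's argument differs in its ordering and choice of pivot: it first establishes \eqref{4.5} by a single H\"older interpolation on $\mathfrak m$ between $u=8$ (Hua's bound, restricted to $\mathfrak m$) and $u=20$ (Lemma~\ref{lem4.1}), and then deduces \eqref{4.4} by adding back the major arc contribution, which \eqref{4.51} bounds as $\int_{\mathfrak M}|f(\alpha)|^u\,\mathrm d\alpha\ll P^{u-4}\le P^{5u/6-5/3}$ for $u\le 14$. You reverse the logical flow, taking \eqref{1.5} as an input (legitimate, since the introduction cites an independent external source for it) to obtain \eqref{4.4} first, and then deducing the lower half of \eqref{4.5} by the inclusion $\mathfrak m\subset[0,1]$ and the upper half by a second interpolation. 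The paper's route is marginally more economical---one interpolation suffices for all of \eqref{4.5}, and \eqref{1.5} emerges as a corollary rather than a hypothesis---while yours has the small advantage of not invoking the separate major arc estimate \eqref{4.51}.
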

 
\begin{proof}
It is a consequence of Hua's Lemma \cite[Lemma 2.5]{hlm} that
\begin{equation}\label{4.6}
\int_\grm |f(\alpha)|^8\d \alpha \le \int_0^1 |f(\alpha)|^8\d \alpha \ll P^{5+\varepsilon}.
\end{equation}
One interpolates linearly between this estimate and the bound established in Lemma 
\ref{lem4.1} via H\"older's inequality to confirm the upper bound \eqref{4.5} for 
$8\le u\le 20$. The upper bound \eqref{4.4} then follows on noting that for $6\le u\le 14$, it 
follows from \eqref{4.51} that
\[
\int_{\mathfrak M}|f(\alpha)|^u\d \alpha \ll P^{u-4}\ll P^{\frac56 u-\frac53}.
\]
Since $[0,1]=\mathfrak M\cup \mathfrak m$, the desired conclusion follows at once.
\end{proof}

In the special case $u=14$, the first conclusion of Lemma \ref{lemma5.3} assumes the 
simple form already announced in (\ref{1.5}). 

\begin{lemma}\label{lem4.3}
Let $\mathscr Z$ be a set of $Z$ integers. Then 
$$ \int_0^1 \bigg|\sum_{z\in\mathscr Z} e(\alpha z) \bigg|^2 |f(\alpha)|^2\d\alpha 
\ll PZ +P^{1/2+\varepsilon}Z^{3/2} $$
and
$$ \int_0^1 \Big|\sum_{z\in\mathscr Z} e(\alpha z) \Big|^2 |f(\alpha)|^4\d\alpha 
\ll P^3 Z +P^{2+\varepsilon}Z^{3/2}. $$
\end{lemma}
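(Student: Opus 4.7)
The plan is to expand each integral via Parseval's identity in terms of representation functions, extract the contribution at the zero Fourier frequency, and bound the remaining ``off-diagonal'' sum via Cauchy--Schwarz together with Hua-type mean value estimates for $f$.

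For the first inequality, write $K(\alpha)=\sum_{z\in\mathscr Z}e(\alpha z)$ and introduce
\[
r_K(n)=\#\{(z_1,z_2)\in\mathscr Z^2: z_1-z_2=n\},\qquad r_f(n)=\#\{(x_1,x_2)\in[1,P]^2: x_1^4-x_2^4=n\},
\]
so that $\int_0^1|K(\alpha)|^2|f(\alpha)|^2\d\alpha=\sum_{n\in\dbZ}r_K(n)r_f(n)$. The term $n=0$ contributes $r_K(0)r_f(0)=ZP$, accounting for the first term in the stated bound. For $n\neq 0$ I would apply Cauchy--Schwarz,
\[
\sum_{n\neq 0}r_K(n)r_f(n)\le\Bigl(\int_0^1|K|^4\d\alpha\Bigr)^{1/2}\Bigl(\int_0^1|f|^4\d\alpha\Bigr)^{1/2},
\]
controlling the first factor by $Z^3$ via the trivial $r_K(n)\le r_K(0)=Z$, and the second by Hua's lemma, giving $\ll P^{2+\eps}$. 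Reaching the sharper exponent $P^{1/2+\eps}Z^{3/2}$ claimed requires a further refinement: I would exploit the divisor-type bound $r_f(n)\ll n^{\eps}$, arising from the factorization $x_1^4-x_2^4=(x_1-x_2)(x_1+x_2)(x_1^2+x_2^2)$, together with a careful dissection of the summation ranges in $n$ rather than a single symmetric Cauchy--Schwarz.

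For the second inequality, I would argue analogously with $|f|^4$ in place of $|f|^2$. Let $r_{f^2}(n)=\#\{(x_1,\dots,x_4):x_1^4+x_2^4-x_3^4-x_4^4=n\}$ denote the Fourier coefficients of $|f|^4$. Parseval again yields $\int_0^1|K|^2|f|^4\d\alpha=\sum_n r_K(n)r_{f^2}(n)$, whose diagonal term is $Z\int_0^1|f|^4\d\alpha\ll ZP^{2+\eps}\le ZP^3$. The off-diagonal portion should then be handled by Cauchy--Schwarz with the same bound $\int|K|^4\le Z^3$ and Hua's estimate $\int_0^1|f|^8\d\alpha\ll P^{5+\eps}$, yielding a preliminary bound $Z^{3/2}P^{5/2+\eps}$. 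To attain the sharper $P^{2+\eps}Z^{3/2}$, I would combine the first inequality with the minor-arc Weyl bound $f(\alpha)\ll P^{3/4+\eps}$ on $\grm$ together with the major-arc estimate $\int_\grM|f|^4\d\alpha\ll P$, using $|f|^2\le P^{3/2+\eps}$ on $\grm$ to reduce the fourth-power weight to a quadratic one before invoking the first inequality, and treating the major arc portion by the trivial bound $\|K\|_\infty^2=Z^2$ against the mass $\int_\grM|f|^4\d\alpha\ll P$.

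The principal obstacle is producing the sharp $P$-exponents in the off-diagonal terms. A direct Cauchy--Schwarz with Hua's lemma yields bounds that are each weaker by a factor of $P^{1/2}$ than the claim, so the gain must come from a more refined analysis---either the divisor structure of $r_f(n)$ or the major/minor-arc dichotomy of the quartic Weyl sum $f$. Everything else in the argument is a routine application of Parseval's identity and standard mean value estimates.
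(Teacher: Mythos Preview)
The paper does not give a proof of this lemma at all: it simply cites Kawada--Wooley \cite[Lemma 6.1]{KW} for the case $\mathscr Z\subset[0,P^4]$ and \cite[Lemma 2.2]{BW21} for the removal of that restriction. So there is no in-paper argument to match against, and your task is really to reproduce an external result.

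Your Parseval setup and the extraction of the diagonal contributions ($PZ$ and $Z\int|f|^4\ll P^3Z$) are correct, and you have put your finger precisely on the obstruction: a symmetric Cauchy--Schwarz against $\int|K|^4\le Z^3$ and Hua's bounds $\int|f|^4\ll P^{2+\eps}$, $\int|f|^8\ll P^{5+\eps}$ loses exactly $P^{1/2}$ in each case. However, neither of your proposed repairs recovers the stated exponents in the full range of $Z$.

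For the first estimate, the divisor bound $r_f(n)\ll P^{\eps}$ for $n\neq 0$ yields
\[
\sum_{n\neq 0}r_K(n)r_f(n)\ll P^{\eps}\sum_n r_K(n)=P^{\eps}Z^2,
\]
so you obtain $PZ+P^{\eps}Z^2$. This is a \emph{different} bound from the one asserted: it beats $P^{1/2+\eps}Z^{3/2}$ only when $Z\le P$, and is strictly weaker when $Z>P$ (for instance at $Z=P^2$ you get $P^{4+\eps}$ against the claimed $P^{7/2+\eps}$). Since the pointwise divisor estimate is already sharp, no dissection of the $n$-sum alone will bridge this.

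For the second estimate, your minor-arc step $|f|^4\le P^{3/2+\eps}|f|^2$ does produce $P^{2+\eps}Z^{3/2}$ once the first bound is available, but the major-arc piece $\|K\|_\infty^2\int_{\grM}|f|^4\ll Z^2P^{1+\eps}$ is dominated by $P^3Z+P^{2+\eps}Z^{3/2}$ only when $Z\ll P^{2+\eps}$. In the paper's applications (the proof of Theorem~\ref{thm4.1}) the set $\mathscr Z$ lives inside $[-P^7,P^7]$ with no further size constraint, so this is not adequate.

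In short, your outline is a reasonable first approximation but stops exactly where the real content of the Kawada--Wooley lemma begins; the $P^{1/2}$ saving is the substance of that result, not a routine refinement. You should consult \cite[\S6]{KW} directly for the mechanism.
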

\begin{proof}
This is essentially contained in \cite[Lemma 6.1]{KW}, where these estimates are 
established in the case when $\mathscr Z$ is contained in $[0,P^4]$. As pointed out in 
\cite[Lemma 2.2]{BW21} this condition is not required.
\end{proof}

We now have available sufficient infrastructure to derive upper bounds for cubic moments 
of $\phi_u(n)$ and $\psi_u(n)$.

\begin{proof}[The proof of Theorem \ref{thm4.1}] Let $\vartheta_u (n)$ denote one of 
$\psi_u(n)$, $\phi_u(n)$. On examining the statement of the theorem, it is apparent that we 
may assume that in the former case we have $6\le u\le 25/3$, and in the latter case 
$6\le u\le 11$ and $2P^{4/15}\le Y\le P/16$. We begin with the observation that, by Lemma 
\ref{lem3.3}, one has $\vartheta_u (n) \ll P^{u+8} n^{-2}$. Consequently, when $u\ge 6$, 
one has
$$\sum_{|n|>P^7}|\vartheta_u(n)|^3+
\sum_{\substack{|n|\le P^7\\ |\vartheta_u(n)|\le 1}}|\vartheta_u(n)|^3
\ll P^7+P^{3u+24}\sum_{|n|>P^7}n^{-6}\ll P^{3u-11}.$$

\par It remains to consider the contribution of those integers $n$ with $|n|\le P^7$ and 
$|\vartheta_u(n)|>1$. We put $\Theta(\alpha)=1$ when $\vartheta_u=\psi_u$, and 
$\Theta(\alpha)={\sf n}(\alpha)$ when $\vartheta_u=\phi_u$. Then the definitions 
\eqref{3.8} and \eqref{3.9} take the common form
\begin{equation}\label{Thet}
\vartheta_u(n) = \int_0^1 \Theta(\alpha)|f(\alpha)|^u e(-\alpha n)\d\alpha.
\end{equation}
By Lemma \ref{minapprox}, it follows that $\Theta(\alpha)\in [0,1]$. Thus, by Lemma 
\ref{lemma5.3}, one finds that
$$|\vartheta_u(n)|\le \vartheta_u(0)\le \psi_u(0)\ll P^{\frac56 u-\frac53+\varepsilon}\quad 
(8\le u\le 11).$$
In the missing cases where $6\le u< 8$ one interpolates between \eqref{4.6} and the 
elementary inequality
\begin{equation}\label{Hua4}
\int_0^1|f(\alpha)|^4\d\alpha \ll P^{2+\varepsilon},
\end{equation}
also a consequence of Hua's Lemma \cite[Lemma 2.5]{hlm}, to conclude that
$$|\vartheta_u(n)|\le \vartheta_u(0)\le \psi_u(0)\ll P^{2+\frac34(u-4)+\varepsilon}. $$

Fix a number $\tau$ with $0<\tau< 10^{-10}$ and define $T_0$ by
$$T_0=\begin{cases} P^{\frac34 u-1+\tau},&\text{when $6\le u<8$},\\
P^{\frac56 u-\frac53+\tau},&\text{when $8\le u\le 11$}.\end{cases}$$
Then, on recalling the upper bounds for $\vartheta_u(n)$ just derived,  a 
familiar dyadic dissection argument shows that there is a number $T\in [1,T_0]$ with the 
property that
\begin{align} \sum_{n\in \mathbb Z}|\vartheta_u(n)|^3 &\ll P^{3u-11} + 
(\log P)\sum_{\substack{|n|\le P^7\\ T<|\vartheta_u(n)|\le 2T}}|\vartheta_u(n)|^3\notag\\
&\ll P^{3u-11} + P^\varepsilon T^3 Z,\label{4.7}
\end{align}
where $Z$ denotes the number of elements in the set
$$ {\mathscr Z}=\{n\in\mathbb Z: \text{$|n|\le P^7$ and $T<|\vartheta_u(n)|\le 2T$}\}.$$
For each $n\in\mathscr Z$ there is a complex number $\eta_n$, with $|\eta_n|=1$, for 
which $\eta_n \vartheta_u(n)$ is a positive real number. Write
\begin{equation}
\label{4.8} K(\alpha)=\sum_{n\in\mathscr Z} \eta_n e(-\alpha n). 
\end{equation} 
Then one concludes from \eqref{Thet} and orthogonality that
\begin{equation}\label{4.9}
TZ<\sum_{n\in \mathscr Z}\eta_n\vartheta_u(n)=
\int_0^1 \Theta(\alpha)K(\alpha)|f(\alpha)|^u \d\alpha.
\end{equation}

Beyond this point our argument depends on the size of $T$. Our first argument handles 
the small values $T\le P^{\frac56u-\frac{35}{18}}$. By \eqref{4.9} and H\"older's 
inequality, we obtain the bound
\begin{equation}\label{4.11}
TZ \le I^{1/2}\biggl(\int_0^1 |K(\alpha)^2f(\alpha)^4|\d\alpha\biggr)^{1/3}
\biggl(  \int_0^1 |K(\alp)|^2\d\alp \biggr)^{1/6},
\end{equation}
where
$$I=\int_0^1 \Theta(\alpha)^2|f(\alp)|^{2u-\frac83}\d\alp .$$
By orthogonality, one has
$$ \int_0^1 |K(\alp)|^2\d\alp = Z, $$
and by a consideration of the underlying Diophantine equations, one deduces via Lemma 
\ref{lem4.3} that
\begin{equation}\label{4.10}
\int_0^1 |K(\alpha)^2f(\alpha)^4|\d\alpha \ll P^3Z + P^{2+\varepsilon}Z^{3/2}. 
\end{equation}
Next we confirm the bound $I\ll P^{\frac53u-\frac{35}9+\varepsilon}$. Indeed, in the case 
where $\Theta=1$ we have $6\le u\le 25/3$. In such circumstances $8<2u-8/3\le 14$, and 
so \eqref{4.4} applies and yields the claimed bound. In the case $\Theta={\sf n}$ we have 
$u\le 11$, and hence $2u-8/3<20$. Write $\mathfrak m=\mathfrak m_{P^4,Y/2}$. Then by 
Lemma \ref{minapprox}, we have $0\le {\sf n}(\alpha)\le {\bf 1}_{\mathfrak m}$. We 
therefore deduce that in this second case we have
$$I\le \int_0^1 {\sf n}(\alpha)|f(\alpha)|^{2u-\frac{8}{3}}\d\alpha \le \int_\grm 
|f(\alpha)|^{2u-\frac{8}{3}}\d\alpha ,$$
and \eqref{4.5} confirms our claimed bound for $I$.\par

Collecting these estimates together within \eqref{4.11}, we now have
\[
TZ \ll P^\varepsilon \left(P^3Z + P^2Z^{3/2}\right)^{1/3}Z^{1/6}
\bigl(P^{\frac53 u-\frac{35}9}\bigr)^{1/2}.
\]
On recalling \eqref{4.13}, we find that this relation disentangles to yield the bound
\begin{align*}
T^3Z &\ll P^{2+\frac32(\frac53 u-\frac{35}9)+\varepsilon} 
+TP^{2+\frac53 u-\frac{35}9+\varepsilon}\\
& = P^{3u-8 +\delta(u)+\varepsilon} + TP^{\frac53 u -\frac{17}9+\varepsilon}.
\end{align*}
It transpires that in the range $T\le P^{\frac56u-\frac{35}{18}}$ the first term on the right 
hand side dominates, so that we finally reach the desired conclusion 
$T^3Z \ll P^{3u-8 +\delta(u)+\varepsilon}$. In view of \eqref{4.7}, this is enough to 
complete the proof of Theorem \ref{thm4.1} in the case that $T$ is small.\par

Our second approach is suitable for $T$ of medium size, with
\begin{equation}\label{4.14}
P^{\frac56u-\frac{35}{18}}<T\le P^{\frac56 u - \frac{11}{6}}.
\end{equation}
We apply Schwarz's inequality to \eqref{4.9}, obtaining the bound
$$ TZ\le \biggl(\int_0^1 |K(\alpha)^2f(\alpha)^4|\d\alpha\biggr)^{1/2}
\biggl( \int_0^1 \Theta(\alpha)^2|f(\alp)|^{2u-4}\d\alp\biggr)^{1/2}. $$
Note that when $6\le u\le 11$, one has $8\le 2u-4\le 18$, and when instead $u\le 25/3$, 
we have $2u-4<14$. Hence, as in the proof of our earlier estimate for $I$, it follows from 
Lemma \ref{lemma5.3} that
$$ \int_0^1 \Theta(\alpha)^2|f(\alp)|^{2u-4}\d\alp \ll P^{\frac53 u - 5+\varepsilon}. $$
Applying this estimate in combination with \eqref{4.10}, we conclude that
$$TZ \ll  P^\varepsilon (P^3Z+P^2Z^{3/2})^{1/2}(P^{\frac53 u-5})^{1/2}. $$
This bound disentangles to deliver the relation
$$ T^3Z \ll TP^{\frac53 u-2+\varepsilon}+ T^{-1} P^{\frac{10}{3}u-6+\varepsilon}.
$$
On recalling \eqref{4.13}, we find that our present assumptions \eqref{4.14} concerning 
the size of $T$ deliver the estimate
$$T^3Z \ll P^{\frac52 u-\frac{23}{6}+\varepsilon}+
P^{\frac{5}{2}u-\frac{73}{18}+\varepsilon}\ll P^{3u-8+\delta(u)+\varepsilon}.$$
The conclusion of Theorem \ref{thm4.1} again follows in this case, by virtue of \eqref{4.7}.

\par The analysis of the large values $T$ satisfying $P^{\frac56 u-\frac{11}{6}}<T\le T_0$ 
is more subtle. Suppose temporarily that $\vartheta_u=\psi_u$, and hence that $u\le 25/3$. 
Then, by \eqref{2.4} and \eqref{4.9},
$$ TZ  \le \int_0^1 {\sf N}(\alpha)K(\alpha)|f(\alpha)|^u \d\alpha
+ \int_0^1 {\sf n}(\alpha)K(\alpha)|f(\alpha)|^u \d\alpha.$$
By hypothesis, we have $u\ge 6$. Also, from Lemma \ref{majapprox}, we have
${\sf N}\le {\bf 1}_{{\mathfrak N}_{P^4,P/8}}$, so that \eqref{4.51} yields the bound
$$ \int_0^1 {\sf N}(\alpha)K(\alpha)|f(\alpha)|^u \d\alpha
\le Z \int_{{\mathfrak M}_{P^4,P/8}} |f(\alpha)|^u \d\alpha \ll ZP^{u-4}.$$
Since $u-4<\frac56 u - \frac{11}{6}$, for large enough $P$ one has 
$ZP^{u-4}<\frac 12 TZ$. Thus
\begin{equation}\label{4.16}
TZ \ll \int_0^1 {\sf n}(\alpha)K(\alpha)|f(\alpha)|^u \d\alpha.
\end{equation}
Note that this is exactly the inequality \eqref{4.9} in the case where $\vartheta_u=\phi_u$. 
Consequently, the upper bound \eqref{4.16} holds for the large values of $T$ currently 
under consideration, irrespective of the choice of $\vartheta_u$. Now apply Schwarz's 
inequality to \eqref{4.16}. Then, by Lemma \ref{minapprox}, we 
deduce that
$$ TZ \le \biggl(\int_0^1 |K(\alpha)f(\alpha)|^2 \d\alpha\biggr)^{1/2}
\biggl( \int_{\mathfrak m}|f(\alp)|^{2u-2}\d\alp\biggr)^{1/2}, $$
where again we write $\mathfrak m=\mathfrak m_{P^4,Y/2}$. Note here that $u\le 11$, 
so that $2u-2\le 20$. Hence, by Lemmata \ref{lemma5.3} and \ref{lem4.3}, we have
$$ TZ \ll P^\varepsilon \bigl( PZ+P^{\frac12}Z^{\frac32}\bigr)^{1/2}
\bigl( P^{\frac53 u-\frac{10}3}\bigr)^{1/2}.$$
Consequently, our assumptions concerning the size of $T$ reveal that
\begin{align}
T^3Z&\ll TP^{\frac53 u-\frac{7}{3}+\varepsilon} 
+T^{-1}P^{\frac{10}3 u-\frac{17}3+\varepsilon}\notag \\
&\ll T_0P^{\frac53 u-\frac{7}{3}+\varepsilon}+P^{\frac52 u-\frac{23}{6}+\varepsilon}.
\label{5.z}
\end{align}
When $6\le u<8$, one has
$$\bigl(\tfrac{3}{4}u-1\bigr)+\bigl(\tfrac{5}{3}u-\tfrac{7}{3}\bigr) 
=\tfrac{29}{12}u-\tfrac{10}{3}\le \tfrac{5}{2}u-\tfrac{23}{6},$$
whilst for $8\le u\le 11$,
$$\bigl(\tfrac{5}{6}u-\tfrac{5}{3}\bigr)+\bigl(\tfrac{5}{3}u-\tfrac{7}{3}\bigr) 
=\tfrac{5}{2}u-4<\tfrac{5}{2}u-\tfrac{23}{6}.$$
Then in either case one finds from \eqref{5.z} via \eqref{4.13} that 
$T^3Z\ll P^{3u-8+\delta(u)+2\tau}$, and the conclusion of Theorem \ref{thm4.1} 
follows in this final case, again by \eqref{4.7}, on taking $\tau$ sufficiently small.
\end{proof}

We close this section with a related but simpler result.

\begin{theorem}\label{thm4.4}
One has
$$ \sum_{n\in \mathbb Z}\psi_4(n)^3 \ll P^{13/2+\varepsilon}.$$
\end{theorem}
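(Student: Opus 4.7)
The approach is to adapt the large-values framework from the proof of Theorem~\ref{thm4.1} to the smaller exponent $u=4$, exploiting the non-negativity and evenness of $\psi_4$ (it counts integer representations of $n$ by $x_1^4+x_2^4-x_3^4-x_4^4$). Two routine inputs will suffice: the decay bound $\psi_4(n)\ll P^{12}|n|^{-2}$ for $n\neq 0$ from Lemma~\ref{lem3.3}, and the uniform bound $\psi_4(n)\le \psi_4(0)\ll P^{2+\varepsilon}$ that follows from \eqref{Hua4}. The former controls the tail $|n|>P^7$ with room to spare, while Parseval together with Hua's bound $\sum_n\psi_4(n)^2=\int_0^1|f|^8\,\d\alpha\ll P^{5+\varepsilon}$ handles the range $\psi_4(n)\le 1$.

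After a standard dyadic dissection I may assume there is a scale $T\in [1,P^{2+\varepsilon}]$ and a set $\mathscr Z$ of $Z$ integers with $T<\psi_4(n)\le 2T$ on $\mathscr Z$, and it suffices to show $T^3Z\ll P^{13/2+\varepsilon}$. Setting $K(\alpha)=\sum_{n\in\mathscr Z}e(-\alpha n)$, the non-negativity of $\psi_4$ gives
\[
TZ\le \sum_{n\in\mathscr Z}\psi_4(n)=\int_0^1 K(\alpha)|f(\alpha)|^4\,\d\alpha.
\]
For $T\le P^{3/2}$, Parseval alone delivers $T^2Z\le \sum_n\psi_4(n)^2\ll P^{5+\varepsilon}$, so $T^3Z\le TP^{5+\varepsilon}\le P^{13/2+\varepsilon}$. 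For $T>P^{3/2}$, I apply Cauchy-Schwarz in the form
\[
TZ\le \biggl(\int_0^1 |K(\alpha)|^2|f(\alpha)|^2\,\d\alpha\biggr)^{1/2}\biggl(\int_0^1 |f(\alpha)|^6\,\d\alpha\biggr)^{1/2},
\]
estimate the first factor by Lemma~\ref{lem4.3} as $\ll PZ+P^{1/2+\varepsilon}Z^{3/2}$, and bound the second by $\int|f|^6\le (\int|f|^4)^{1/2}(\int|f|^8)^{1/2}\ll P^{7/2+\varepsilon}$. Squaring and solving yields $T^2Z\ll P^{9/2+\varepsilon}+P^{4+\varepsilon}Z^{1/2}$, hence $Z\ll T^{-2}P^{9/2+\varepsilon}+T^{-4}P^{8+\varepsilon}$ and
\[
T^3Z\ll TP^{9/2+\varepsilon}+T^{-1}P^{8+\varepsilon}.
\]
The first term is bounded by $P^{13/2+\varepsilon}$ via $T\le P^{2+\varepsilon}$, while the constraint $T>P^{3/2}$ bounds the second by the same quantity.

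The one delicate point is the choice of Cauchy-Schwarz split. If instead one pairs the bound $\int|K|^2|f|^4\ll P^3Z+P^{2+\varepsilon}Z^{3/2}$ of Lemma~\ref{lem4.3} with $\int|f|^4\ll P^{2+\varepsilon}$, the analysis collapses to $T^3Z\ll P^{7+\varepsilon}$, missing the intended saving of $P^{1/2}$ entirely. The split chosen above succeeds because pairing the sharper $|f|^2$-weighted second moment with the classical sixth moment $\int|f|^6\ll P^{7/2+\varepsilon}$ balances the two resulting error terms precisely at the critical threshold $T\asymp P^{3/2}$, where it meets the Parseval bound from the opposite side.
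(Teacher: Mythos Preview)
Your proof is correct and follows essentially the same approach as the paper: the small-$T$ range via $T^2Z\le \sum_n\psi_4(n)^2=\int_0^1|f|^8\,\d\alpha$ is exactly the paper's Schwarz step, and your large-$T$ Cauchy--Schwarz split $(\int|K|^2|f|^2)^{1/2}(\int|f|^6)^{1/2}$ followed by $\int|f|^6\le (\int|f|^4)^{1/2}(\int|f|^8)^{1/2}$ reproduces the paper's H\"older split $(\int|K^2f^2|)^{1/2}(\int|f|^4)^{1/4}(\int|f|^8)^{1/4}$ with the identical outcome $T^3Z\ll TP^{9/2+\varepsilon}+T^{-1}P^{8+\varepsilon}$. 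The only cosmetic difference is that the paper dispenses with the tail $|n|>P^7$ by noting the compact support $\psi_4(n)=0$ for $|n|>2P^4$, rather than invoking Lemma~\ref{lem3.3}.
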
 

\begin{proof} By \eqref{3.8} and orthogonality, the Fourier coefficient $\psi_4(n)$ has a 
Diophantine interpretation that shows on the one hand that $\psi_4(n)\in\mathbb N_0$, and 
on the other that $\psi_4(n)=0$ for all $n\in\mathbb Z$ with $|n|>2P^4$. By \eqref{3.8} 
and \eqref{Hua4}, we also have the bound $ \psi_4(n)\le \psi_4(0) \ll P^{2+\eps}$. The 
argument leading to \eqref{4.7} now shows that there is a number $T$ with 
$1\le T \le P^{2+\varepsilon}$ having the property that
\begin{align}
\sum_{n\in \mathbb Z}\psi_4(n)^3 &\ll P^{6+\varepsilon}+
P^\varepsilon\sum_{\substack{|n|\le 2P^4\\ T\le \psi_4(n)\le 2T}} \psi_4(n)^3\notag \\
&\ll P^{6+\varepsilon}+P^\varepsilon T^3Z,\label{5.w}
\end{align}
where $Z$ denotes the number of elements in the set
$$ {\mathscr Z}=\{n\in\mathbb Z: \text{$|n|\le 2P^4$ and $T<|\psi_4(n)|\le 2T$}\}.$$
As in the corresponding analysis within the proof of Theorem \ref{thm4.1}, we next find 
that there are unimodular complex numbers $\eta_n$ $(n\in\mathscr Z)$ having the 
property that, with $K(\alpha)$ defined via \eqref{4.8}, one has
$$TZ <\int_0^1 K(\alpha)|f(\alpha)|^4 \d\alpha.$$

We first handle small values of $T$. Here, an application of Schwarz's inequality leads via 
\eqref{4.6} to the bound
$$TZ\le \biggl(\int_0^1 |f(\alpha)|^8\d\alpha\biggr)^{1/2}
\biggl(  \int_0^1 |K(\alp)|^2\d\alp \biggr)^{1/2}\ll P^{5/2+\eps}Z^{1/2}.$$
This disentangles to yield $T^3Z \ll TP^{5+\varepsilon}$, proving the theorem for 
$T\le P^{3/2}$.\par

Next, when $T$ is large, we apply H\"older's inequality in a manner similar to that employed 
in the large values analysis of the proof of Theorem \ref{thm4.1}. Thus
$$ TZ \le \biggl(\int_0^1 |K(\alpha)^2f(\alpha)^2|\d\alpha\biggr)^{1/2}
\biggl( \int_0^1 |f(\alp)|^4\d\alp \biggr)^{1/4} \biggl( \int_0^1 |f(\alp)|^8\d\alp 
\biggr)^{1/4},$$
and hence
$$ TZ\ll P^\varepsilon (PZ+ P^{1/2}Z^{3/2})^{1/2}P^{7/4}. $$
We now obtain the bound
$$ T^3Z \ll TP^{9/2+\varepsilon} +T^{-1}P^{8+\varepsilon},$$
and in view of \eqref{5.w}, this proves Theorem \ref{thm4.4} in the complementary case 
$P^{3/2}\le T\le P^{2+\varepsilon}$.
\end{proof}
 
\section{Mean values of quartic Weyl sums}
In this section we estimate certain entangled moments of quartic Weyl sums, and then apply 
them to obtain minor arc estimates for use within the proofs of Theorems \ref{theorem1.1} 
and \ref{theorem1.2}. Throughout this section and the next, let the pair of integers 
$c_i, d_i$ $(1\le i\le 5)$ satisfy the condition that the points 
$(c_i:d_i)\in\mathbb P^1(\mathbb Q) $ are distinct. Define the linear forms 
$\mathrm M_i=\mathrm M_i (\alpha,\beta)$ $(1\le i\le 5)$ by
\begin{equation}\label{5.1}
\mathrm M_i(\alpha,\beta) = c_i\alpha+d_i\beta.  
\end{equation}
Let $u>0$, and recall the definition of the exponent $\delta(u)$ from \eqref{4.1}. Then, 
with $2P^{4/15}\le Y\le P/16$ and ${\sf n}={\sf n}_{P^4,Y}$, we consider the mean values
\begin{align*} 
I_u=& \int_0^1\!\!\int_0^1 
|f(\mathrm M_1)f(\mathrm M_2)f(\mathrm M_3)|^u\d\alpha\d\beta, \\
J_u=& \int_0^1\!\!\int_0^1 
{\sf n}(\mathrm M_1){\sf n}(\mathrm M_2){\sf n}(\mathrm M_3)
|f(\mathrm M_1)f(\mathrm M_2)f(\mathrm M_3)|^u\d\alpha\d\beta.
\end{align*}

\begin{theorem}\label{thm5.1}
One has $I_4\ll P^{13/2+\eps}$ and $I_u\ll P^{3u-8+\delta(u)+\varepsilon}$ 
$(6\le u\le 25/3)$. Also, when $6\le u\le 11$, one has 
$J_u\ll P^{3u-8+\delta(u)+\varepsilon}$.
\end{theorem}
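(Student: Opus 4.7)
The plan is to reduce the double integrals $I_u$ and $J_u$ to the cubic Fourier-coefficient moments already bounded in Theorems \ref{thm4.1} and \ref{thm4.4}. The mechanism is a bivariate analogue of identity \eqref{3rdv}, adapted to three linear forms whose projective classes $(c_i:d_i)$ are pairwise distinct. Since $u\ge 2$ in every case under consideration, Lemma \ref{lem3.2} (with $T=f$ and $W=1$) supplies the absolutely and uniformly convergent expansion $|f(\gamma)|^u=\sum_{n\in\mathbb Z}\psi_u(n)e(\gamma n)$. Substituting $\gamma=\mathrm M_i(\alpha,\beta)$ for $i=1,2,3$ and multiplying, I obtain a triple Fourier series for $|f(\mathrm M_1)f(\mathrm M_2)f(\mathrm M_3)|^u$. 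Lemma \ref{lem3.3} yields $|\psi_u(n)|\ll P^{u+8}n^{-2}$, so $\sum_{n\in\mathbb Z}|\psi_u(n)|$ converges, the triple series converges absolutely, and I may interchange summation with the double integral over $[0,1]^2$.

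Second, for each $(n_1,n_2,n_3)\in\mathbb Z^3$ orthogonality gives
\[
\int_0^1\!\!\int_0^1 e\bigl(n_1\mathrm M_1+n_2\mathrm M_2+n_3\mathrm M_3\bigr)\,d\alpha\,d\beta=\mathbf{1}\bigl[\textstyle\sum_i c_in_i=0\bigr]\mathbf{1}\bigl[\textstyle\sum_i d_in_i=0\bigr].
\]
Because the points $(c_i:d_i)$ are pairwise distinct, the $2\times 3$ matrix with rows $(c_1,c_2,c_3)$ and $(d_1,d_2,d_3)$ has rank $2$, and its integer kernel is a rank-one sublattice $L=\mathbb Z\cdot(e_1,e_2,e_3)$ of $\mathbb Z^3$; a primitive generator is given, up to sign, by the three $2\times 2$ minors $c_2d_3-c_3d_2$, $c_3d_1-c_1d_3$ and $c_1d_2-c_2d_1$ divided by their greatest common divisor. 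Distinctness of the $(c_i:d_i)$ forces $e_j\ne 0$ for each $j$. Consequently
\[
I_u=\sum_{m\in\mathbb Z}\psi_u(e_1m)\psi_u(e_2m)\psi_u(e_3m).
\]

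Third, H\"older's inequality with three $\ell^3$ factors bounds the right-hand side by $\prod_{j=1}^3\bigl(\sum_{m\in\mathbb Z}|\psi_u(e_jm)|^3\bigr)^{1/3}$; since $m\mapsto e_jm$ is injective on $\mathbb Z$ whenever $e_j\ne 0$, each factor is at most $\sum_{n\in\mathbb Z}|\psi_u(n)|^3$. Hence $|I_u|\le\sum_n|\psi_u(n)|^3$, and the asserted bounds on $I_u$ for $6\le u\le 25/3$ and on $I_4$ follow at once from Theorems \ref{thm4.1} and \ref{thm4.4} respectively. The treatment of $J_u$ proceeds identically with $\psi_u$ replaced by $\phi_u$: Lemma \ref{lem3.2} is applied with the smooth weight $W={\sf n}$, which is twice continuously differentiable by Lemma \ref{minapprox}, giving ${\sf n}(\gamma)|f(\gamma)|^u=\sum_n\phi_u(n)e(\gamma n)$, and the same lattice reduction yields $|J_u|\le\sum_n|\phi_u(n)|^3\ll P^{3u-8+\delta(u)+\varepsilon}$ for $6\le u\le 11$ by Theorem \ref{thm4.1}. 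The only delicate step is the verification $e_j\ne 0$, which the distinctness hypothesis on $(c_i:d_i)$ makes automatic; the substance of the theorem lies entirely in the cubic moment estimates of \S4.
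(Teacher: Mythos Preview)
Your argument is correct and follows essentially the same route as the paper: expand each factor in its Fourier series (via Lemma \ref{lem3.2}), use orthogonality to reduce to a sum over the rank-one kernel lattice of the $2\times 3$ coefficient matrix, observe that the generator has all nonzero entries because the $(c_i:d_i)$ are pairwise distinct, and then bound the resulting sum by $\sum_n|\psi_u(n)|^3$ (respectively $\sum_n|\phi_u(n)|^3$) and invoke Theorems \ref{thm4.1} and \ref{thm4.4}. The only cosmetic difference is that the paper applies the elementary inequality \eqref{T} in place of your H\"older step, which yields the identical bound.
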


\begin{proof}
 It follows from Lemmata \ref{minapprox} and \ref{lem3.2} that the 
function ${\sf n}(\gamma)|f(\gamma)|^u$ has a uniformly convergent Fourier series with 
coefficients $\phi_u(n)$. By orthogonality, we conclude that
$$ J_u = \sum_{(n_1,n_2,n_3)\in N} \phi_u(n_1)\phi_u(n_2)\phi_u(n_3), $$
where $N$ is the set of solutions in integers $n_1,n_2,n_3$ of the linear system
$$ c_1n_1+ c_2n_2+ c_3n_3 = d_1n_1+d_2n_2+d_3n_3=0. $$
Since the projective points $(c_i:d_i)$ are distinct, there exist non-zero integers $l_i$, 
depending only on the $c_i,d_i$, having the property that the solutions of this system are 
precisely the triples $(n_1,n_2,n_3)=m(l_1,l_2,l_3)$ $(m\in \mathbb Z)$. It therefore follows 
from \eqref{T} that
$$J_u \le \frac13 \sum_{m\in\mathbb Z} \big(|\phi_u(l_1m)|^3 + |\phi_u(l_2m)|^3 
+|\phi_u(l_3m)|^3 \big) \le  \sum_{n\in\mathbb Z}|\phi_u(n)|^3 .$$
The desired bound for $J_u$ now follows from Theorem \ref{thm4.1}. The bounds for $I_4$ 
and $I_u$ follow in the same way, but the argument has to be built on the cubic moment 
estimates for $\psi_u(n)$ that are provided by Theorems \ref{thm4.1} and \ref{thm4.4}.
\end{proof}

We now turn to related, less balanced mixed moments. With $u$ and $Y$ as before, we 
define
\begin{align*} 
K_u=&\int_0^1\!\!\int_0^1|f(\mathrm M_1)f(\mathrm M_2)|^u |f(\mathrm M_3)|^6\d\alpha
\d\beta, \\
L_u=& \int_0^1\!\!\int_0^1 {\sf n}(\mathrm M_1){\sf n}(\mathrm M_2)
|f(\mathrm M_1)f(\mathrm M_2)|^u |f(\mathrm M_3)|^6\d\alpha\d\beta ,
\end{align*}
and put
$$ \eta(u) = \frac{19}{6}-\frac{u}{3}. $$

\begin{theorem}\label{thm5.2}
Subject to the hypotheses of this section, one has
\begin{align*}
K_u &\ll P^{2u-2+\eta(u)+\varepsilon} \quad (6\le u\le 19/2),\\
L_u &\ll P^{2u-2+\eta(u)+\varepsilon} \quad (6\le u\le 11).
\end{align*}
\end{theorem}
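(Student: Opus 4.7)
The plan is to prove the $L_u$ bound via the cubic-moment identity used in Theorem \ref{thm5.1}, and to deduce the $K_u$ bound by combining this with a smooth major/minor arc decomposition on the variables $M_1,M_2$.

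\medskip

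For $L_u$, take $h_1=h_2=\mathsf n(\gamma)|f(\gamma)|^u$ and $h_3=|f(\gamma)|^6$. By Lemmata \ref{minapprox}, \ref{lem3.1} and \ref{lem3.2}, each $h_j$ is continuous with absolutely convergent Fourier series, the coefficients being $\phi_u, \phi_u, \psi_6$ in turn. As in the proof of Theorem \ref{thm5.1}, orthogonality combined with the distinctness of $(c_i:d_i)$ reduces $L_u$ to the cubic sum
\[
L_u = \sum_{m\in\mathbb Z}\phi_u(l_1m)\phi_u(l_2m)\psi_6(l_3m)
\]
over the rank-one lattice of common zeros of the underlying linear forms, for suitable non-zero integers $l_1,l_2,l_3$. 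H\"older's inequality with exponents $(3,3,3)$ bounds $|L_u|$ by $\bigl(\sum_n|\phi_u(n)|^3\bigr)^{2/3}\bigl(\sum_n|\psi_6(n)|^3\bigr)^{1/3}$; Theorem \ref{thm4.1} then applies to both cubic moments, the first since $6\le u\le 11$, and the second as the case $u=6\in[6,25/3]$. A short computation with $\delta(v)=(25-3v)/6$ reveals that the resulting exponent $\tfrac{2}{3}(3u-8+\delta(u))+\tfrac{1}{3}(10+\delta(6))$ simplifies to $(10u+7)/6 = 2u-2+\eta(u)$, exactly as asserted.

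\medskip

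For $K_u$ in the range $6\le u\le 25/3$, the identical argument with $(h_1,h_2,h_3)=(|f|^u,|f|^u,|f|^6)$ applies, since Theorem \ref{thm4.1} provides the bound on $\sum_n|\psi_u(n)|^3$ throughout this range. To extend to $25/3<u\le 19/2$, insert the partition of unity $1=\mathsf n(M_i)+\mathsf N(M_i)$ on each of $M_1$ and $M_2$ and expand the product in the integrand, obtaining
\[
K_u = L_u + K_u^{(a)} + K_u^{(b)} + K_u^{(c)},
\]
where each residual piece carries at least one factor $\mathsf N(M_i)$. The piece $L_u$ is already in target. Each residual piece admits an analogous Fourier representation in which $\phi_u$ is replaced by $\nu_u(n) = \int_0^1\mathsf N(\gamma)|f(\gamma)|^u e(-\gamma n)\,\mathrm d\gamma$. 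Classical major-arc estimates give $\nu_u(0)\ll P^{u-4}$ and $\sum_n|\nu_u(n)|^2 = \int\mathsf N^2|f|^{2u}\,\mathrm d\gamma\ll P^{2u-4}$; combining the resulting cubic-moment control on $\nu_u$ with H\"older's inequality and the $\phi_u, \psi_6$ cubic-moment bounds of Theorem \ref{thm4.1} delivers the target $P^{2u-2+\eta(u)+\varepsilon}$ on each residual piece throughout the narrow residual range.

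\medskip

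The main obstacle is the restriction $u\le 25/3$ in Theorem \ref{thm4.1}'s cubic-moment bound for $\psi_u$, which rules out a purely Fourier-analytic treatment of $K_u$ over its full stated range. The decomposition localises the genuinely oscillatory contribution of $|f|^u$ to the minor-arc piece $L_u$, where the stronger $\phi_u$ cubic moments apply, and relegates the residual pieces to elementary major-arc estimates whose tightness depends on the narrowness of $\mathrm{supp}(\mathsf N)$ guaranteed by $Y\le P/16$.
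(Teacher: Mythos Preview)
Your treatment of $L_u$ is correct and essentially matches the paper's: both pass to the rank-one lattice sum and apply H\"older with exponents $(3,3,3)$ to reach $\bigl(\sum|\phi_u|^3\bigr)^{2/3}\bigl(\sum|\psi_6|^3\bigr)^{1/3}$, after which Theorem~\ref{thm4.1} and the numerology you record complete the proof. Your direct treatment of $K_u$ for $6\le u\le 25/3$ via $\sum|\psi_u|^3$ is also fine.

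The gap is in the residual range $25/3<u\le 19/2$. Your cubic-moment control $\sum_n|\nu_u(n)|^3\le \nu_u(0)\sum_n|\nu_u(n)|^2\ll P^{3u-8}$ is correct, but feeding it into H\"older $(3,3,3)$ gives, for the piece $K_u^{(c)}$ carrying two factors $\nu_u$,
\[
K_u^{(c)}\le \Bigl(\sum_n|\nu_u(n)|^3\Bigr)^{2/3}\Bigl(\sum_n|\psi_6(n)|^3\Bigr)^{1/3}\ll P^{2u-16/3}\cdot P^{67/18}=P^{2u-29/18+\varepsilon}.
\]
One checks that $2u-29/18\le 2u-2+\eta(u)$ is equivalent to $u\le 25/3$, so this bound misses the target precisely on the range where you need it; at $u=19/2$ it overshoots by $P^{7/18}$. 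The mixed pieces $K_u^{(a)},K_u^{(b)}$ fail by the same margin under the analogous $(3,3,3)$ split.

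The paper avoids this by working not with cubic moments of $\nu_u$ but with the pointwise bound $|\nu_u(n)|\le \nu_u(0)\ll P^{u-4}$ together with the $\ell^1$-identity $\sum_n\psi_6(n)=|f(0)|^6\ll P^6$. Concretely, after reaching the analogue of your lattice sum the paper writes $|\psi_u(n)|^2\ll |\phi_u(n)|^2+P^{2u-8}$, so that
\[
K_u\ll \sum_m \psi_6(l_3m)|\phi_u(l_2m)|^2 + P^{2u-8}\sum_m\psi_6(l_3m)\ll L_u\text{-sum}+P^{2u-2},
\]
and $P^{2u-2}\le P^{2u-2+\eta(u)}$ exactly when $u\le 19/2$. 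This is both simpler than your four-piece expansion and, crucially, uses the non-negativity of $\psi_6$ to access its $\ell^1$-norm rather than its $\ell^3$-norm. Your decomposition can be repaired along the same lines---bound each $|\nu_u|$ by $P^{u-4}$ and sum $\psi_6$ directly---but as written the ``cubic-moment control on $\nu_u$'' is not strong enough.
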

\begin{proof} We proceed as in the initial phase of the proof of Theorem \ref{thm5.1}. 
Using the same notation, we obtain
$$ L_u = \sum_{(n_1,n_2,n_3)\in N} \phi_u(n_1)\phi_u(n_2)\psi_6(n_3). $$
Note here that $\psi_6(m)$ counts solutions of a Diophantine equation, and consequently is 
a non-negative integer. Hence
$$ L_u \le \frac12 \sum_{(n_1,n_2,n_3)\in N} \psi_6(n_3)\big(|\phi_u(n_2)|^2+ 
|\phi_u(n_1)|^2\big). $$
By symmetry, we may therefore suppose that for appropriate non-zero integers $l_2$ and 
$l_3$, depending at most on $\mathbf c$ and $\mathbf d$, one has
\begin{equation}\label{LL}
L_u \le \sum_{(n_1,n_2,n_3)\in N} \psi_6(n_3)|\phi_u(n_2)|^2
= \sum_{m\in\mathbb Z} \psi_6(l_3m)|\phi_u(l_2m)|^2. 
\end{equation}
Next, first applying H\"older's inequality, and then Theorem \ref{thm4.1} and \eqref{4.13}, 
we obtain the bound
\begin{align*}
L_u&\le \Big(\sum_{n\in\mathbb Z} \psi_6(n)^3\Big)^{1/3}
\Big(\sum_{m\in\mathbb Z} |\phi_u(m)|^3\Big)^{2/3}\\
&\ll P^\varepsilon \bigl( P^{15-\frac{23}{6}}\bigr)^{1/3}
\left( P^{\frac{5}{2}u-\frac{23}{6}}\right)^{2/3}.
\end{align*}
The estimate for $L_u$ recorded in Theorem \ref{thm5.2} therefore follows on recalling the 
definition of $\eta(u)$.\par

The initial steps in the estimation of $K_u$ are the same, and one reaches a bound for 
$K_u$ identical to \eqref{LL} except that $\phi_u$ now becomes $\psi_u$. We split into 
major and minor arcs by inserting the relation $1={\sf N}(\alpha)+{\sf n}(\alpha)$, with 
parameters $X=P^4$ and $Y=P^{1/3}$, into \eqref{3.8}. From \eqref{4.51} we obtain
$$ \biggl|\int_0^1 {\sf N}(\alpha) |f(\alpha)|^u e(-\alpha n)\d\alpha\biggr|
\le \int_{{\mathfrak M}_{P^4,P}} |f(\alpha)|^u\d\alpha\ll P^{u-4}.$$
Hence, we discern from \eqref{3.8} and \eqref{3.9} that
$$ |\psi_u(n)|^2\ll |\phi_u(n)|^2 + P^{2u-8}, $$
and so,
$$K_u\ll  \sum_{m\in\mathbb Z} \psi_6(l_3m)|\phi_u(l_2m)|^2
+ P^{2u-8} \sum_{m\in\mathbb Z} \psi_6(l_3m).$$
Here the first sum over $m$ is the same as that occurring in the estimation of $L_u$ in 
\eqref{LL}, and has already been estimated above. Thus, since
$$\sum_{n\in\mathbb Z}\psi_6(n)= |f(0)|^6 \ll P^6,$$
we conclude that
$$K_u\ll P^{2u-2+\eta(u)+\varepsilon}+P^{2u-8}\sum_{n\in\mathbb Z} \psi_6(n)\ll 
P^{2u-2+\eta(u)+\varepsilon}+P^{2u-2}.$$
Provided that $u\le 19/2$, which guarantees $\eta(u)$ to be non-negative, this estimate 
confirms the upper bound for $K_u$ claimed in the theorem.
\end{proof}

Note that the mean values $I_u$ and $J_u$ involve $s= 3u$ Weyl sums, at least for integral 
values of $u$. By comparison, the number of Weyl sums in $K_u$ and $L_u$ is $s=2u+6$. 
A short calculation shows that when applied with the same value of $s$, with $s\ge 18$, the 
exponents of $P$ in Theorems \ref{thm5.1} and \ref{thm5.2} coincide. Since almost all of 
Theorem \ref{thm5.1} may be recovered from Theorem \ref{thm5.2} via H\"older's 
inequality, and since for fixed values of $s$ the exponent $u$ in Theorem \ref{thm5.2} is at 
least as large, Theorem \ref{thm5.2} is morally the stronger result. In our later application 
of the circle method, this allows for larger values of $r_j$ in the profiles associated to the 
simultaneous equations \eqref{1.1}, and this is essential for our method to succeed. 
Another advantage is that in $L_u$ only two of the forms $\mathrm M_i$ are on minor arcs, 
while in the mean value $J_u$ all three are constrained to minor arcs.\par

We continue with another result in which the profile is even farther out of balance. We 
consider the integral 
$$ M = \int_0^1\!\!\int_0^1 {\sf n}(\mathrm M_1) {\sf n}(\mathrm M_2)
|f(\mathrm M_1)^{11}f(\mathrm M_2)^{11}f(\mathrm M_3)^4|\d\alpha\d\beta. $$

\begin{theorem}\label{thm5.3}
Given the hypotheses of this section, one has $M \ll P^{18-1/{18}+\varepsilon}$. 
\end{theorem}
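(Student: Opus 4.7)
The plan is to expand each of the three factors in $M$ into its Fourier series and then estimate the resulting diagonal sum by H\"older's inequality, invoking the cubic moments from Theorems \ref{thm4.1} and \ref{thm4.4}. By Lemma \ref{lem3.2} applied with $W=1$ to $|f|^4$, and with $W={\sf n}$ to $|f|^{11}$, we have the uniformly and absolutely convergent Fourier expansions
\[
|f(\gamma)|^4=\sum_{n\in\mathbb Z}\psi_4(n)e(\gamma n),\qquad
{\sf n}(\gamma)|f(\gamma)|^{11}=\sum_{n\in\mathbb Z}\phi_{11}(n)e(\gamma n).
\]
Inserting these into the definition of $M$ with $\gamma=\mathrm M_i(\alpha,\beta)$ for $i=1,2,3$, the resulting triple series converges absolutely, so we may integrate term by term. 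Orthogonality selects the integer triples $(n_1,n_2,n_3)$ satisfying the two linear conditions $c_1n_1+c_2n_2+c_3n_3=0$ and $d_1n_1+d_2n_2+d_3n_3=0$. Because the projective points $(c_i:d_i)\in\mathbb P^1(\mathbb Q)$ are distinct, the integer solutions form a rank-one lattice generated by some $(l_1,l_2,l_3)$ with $l_i\neq 0$ depending only on $\mathbf c$ and $\mathbf d$, exactly as in the proofs of Theorems \ref{thm5.1} and \ref{thm5.2}. Thus
\[
M=\sum_{m\in\mathbb Z}\phi_{11}(l_1m)\phi_{11}(l_2m)\psi_4(l_3m).
\]

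Since $\psi_4(n)\ge 0$, taking absolute values and applying H\"older's inequality with three equal exponents yields
\[
M\le \Biggl(\sum_{m}|\phi_{11}(l_1m)|^3\Biggr)^{1/3}\Biggl(\sum_{m}|\phi_{11}(l_2m)|^3\Biggr)^{1/3}\Biggl(\sum_{m}\psi_4(l_3m)^3\Biggr)^{1/3}.
\]
Each sum over $m$ is dominated by the corresponding full sum over $\mathbb Z$. The cubic moment estimate \eqref{4.3} of Theorem \ref{thm4.1}, applied with $u=11$ (which is permissible under the hypotheses on $Y$), gives $\sum_n |\phi_{11}(n)|^3\ll P^{3\cdot 11-8+\delta(11)+\varepsilon}=P^{71/3+\varepsilon}$, since $\delta(11)=(25-33)/6=-4/3$. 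Theorem \ref{thm4.4} provides $\sum_n\psi_4(n)^3\ll P^{13/2+\varepsilon}$. Combining,
\[
M\ll P^{\tfrac{2}{3}\cdot\tfrac{71}{3}+\tfrac{1}{3}\cdot\tfrac{13}{2}+\varepsilon}
=P^{\tfrac{142}{9}+\tfrac{13}{6}+\varepsilon}
=P^{\tfrac{323}{18}+\varepsilon}
=P^{18-1/18+\varepsilon},
\]
which is the desired bound.

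There is no real obstacle here once the infrastructure of \S\S3--4 is in hand: the only points requiring care are the legitimacy of the term-by-term integration (guaranteed by the absolute convergence from Lemma \ref{lem3.2}, which in turn rests on the differentiability provided by Lemma \ref{lem3.1}) and the identification of the solution lattice as one-dimensional with an explicit generator. The arithmetic content of the argument is entirely absorbed into the cubic moment bounds \eqref{4.3} and Theorem \ref{thm4.4}; the rest is a clean application of H\"older. The sharp exponent $18-\tfrac{1}{18}$ arises as the precise balance $\tfrac{2}{3}(71/3)+\tfrac{1}{3}(13/2)=323/18$ between the two cubic moments, explaining the peculiar shape of the constant.
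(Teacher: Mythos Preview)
Your proof is correct and follows essentially the same approach as the paper: Fourier expansion, reduction to the rank-one lattice of solutions, and H\"older's inequality against the cubic moments of Theorems \ref{thm4.1} and \ref{thm4.4}. The only cosmetic difference is that the paper first applies the elementary inequality $|\phi_{11}(n_1)\phi_{11}(n_2)|\le \tfrac12(|\phi_{11}(n_1)|^2+|\phi_{11}(n_2)|^2)$ before H\"older, whereas you apply H\"older directly with three equal exponents; both routes yield the identical final bound.
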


\begin{proof}
We again traverse the initial phase of the proof of Theorem \ref{thm5.1} to confirm the 
relation
$$ M = \sum_{(n_1,n_2,n_3)\in N} \phi_{11}(n_1)\phi_{11}(n_2)\psi_4(n_3). $$
Then, just as in the argument of the proof of Theorem \ref{thm5.2} leading to \eqref{LL}, 
we find that for appropriate non-zero integers $l_2$ and $l_3$, depending at most on 
$\mathbf c$ and $\mathbf d$, one has
$$ M \le \sum_{m\in\mathbb Z} \psi_4(l_3m)|\phi_{11}(l_2m)|^2.$$
Thus, an application of H\"older's inequality in combination with Theorems \ref{thm4.1} and 
\ref{thm4.4}, together with \eqref{4.13}, yields the bound
$$M\le \Bigl( \sum_{n\in \mathbb Z} \psi_{4}(n)^{3} \Bigr)^{1/3}
\Bigl( \sum_{n\in \mathbb Z} |\phi_{11}(n)|^{3} \Bigr)^{2/3}\ll P^\varepsilon 
\bigl( P^{13/2}\bigr)^{1/3}\left( P^{71/3}\right)^{2/3}.$$
The desired conclusion follows a rapid computation.\end{proof}

Finally, we transform the estimates for $L_u$ and $M$ into proper minor arc estimates. In 
the interest of brevity we write $\mathfrak M= \mathfrak M_{P^4,P^{1/3}}$ and put
\begin{equation}\label{pee}
\mathfrak p = [0,1]^2 \setminus (\mathfrak M\times \mathfrak M).
\end{equation}

\begin{theorem}\label{thm5.4}
Suppose that $19/2 < u \le 11$. Then
\begin{equation}\label{minor6uu}
\iint_{\mathfrak p}|f(\mathrm M_1)f(\mathrm M_2)|^u |f(\mathrm M_3)|^6\d\alpha\d\beta 
\ll P^{2u-2+\eta(u)+\varepsilon}.
\end{equation}
Further, one has
\begin{equation}\label{minor41111}
\iint_{\mathfrak p}|f(\mathrm M_1)^{11}f(\mathrm M_2)^{11}f(\mathrm M_3)^4|
\d\alpha\d\beta \ll P^{18-1/18+\varepsilon}.
\end{equation}
\end{theorem}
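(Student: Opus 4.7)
We deduce both estimates from the smoothed mean-value bounds $L_u\ll P^{2u-2+\eta(u)+\varepsilon}$ of Theorem~\ref{thm5.2} and $M\ll P^{18-1/18+\varepsilon}$ of Theorem~\ref{thm5.3} via a pointwise-domination argument for ${\bf 1}_\mathfrak p$.

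Choose $Y=P^{1/3}/2$, which for $P$ sufficiently large lies in the admissible range $[2P^{4/15},P/16]$ of Theorem~\ref{thm4.1}; set ${\sf n}:={\sf n}_{P^4,Y}$. Lemma~\ref{minapprox} then yields ${\bf 1}_{\mathfrak n_{P^4,P^{1/3}}}(\alpha)\le {\sf n}(\alpha)$ for all $\alpha\in\mathbb R$, whence ${\bf 1}_{\mathfrak m}\le{\sf n}$ on $[0,1]$. Since $\mathfrak p\subseteq(\mathfrak m\times[0,1])\cup([0,1]\times\mathfrak m)$, we obtain the pointwise majorant
\[
{\bf 1}_\mathfrak p(\alpha,\beta)\le{\sf n}(\alpha)+{\sf n}(\beta).
\]
If $F$ denotes the integrand of the estimate at hand, this reduces the task to bounding $\iint {\sf n}(\alpha)F\,\d\alpha\d\beta$ and $\iint {\sf n}(\beta)F\,\d\alpha\d\beta$ by $P^{2u-2+\eta(u)+\varepsilon}$ (respectively $P^{18-1/18+\varepsilon}$); by the symmetric role of $\alpha$ and $\beta$ it suffices to treat one of them.

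For $\iint {\sf n}(\alpha)F\,\d\alpha\d\beta$, I would Fourier-expand each factor in turn. By Lemma~\ref{lem3.2} applied to ${\sf n}$ (whose second derivative is $\ll P^8$ by Lemma~\ref{minapprox}), the Fourier coefficients satisfy $\hat{\sf n}(k)\ll P^8 k^{-2}$; each $|f(\mathrm M_i)|^{v_i}$ admits the expansion $\sum_{n_i}\psi_{v_i}(n_i)e(n_i\mathrm M_i)$. Performing the $(\alpha,\beta)$-integration selects tuples $(k,n_1,n_2,n_3)$ satisfying $k+\sum c_i n_i=0$ and $\sum d_i n_i=0$; for each $k$, the resulting inner sum runs over an affine translate $N_k$ of the one-dimensional lattice $N$ appearing in the proofs of Theorems~\ref{thm5.1}--\ref{thm5.3}. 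A H\"older argument combined with the cubic-moment bounds of Theorems~\ref{thm4.1} and~\ref{thm4.4} (which are insensitive to translations by a standard dyadic level-set reduction) bounds each inner sum uniformly in $k$, and the outer sum over $k$ converges absolutely by the $k^{-2}$ decay of $\hat{\sf n}(k)$, matching the claimed exponents.

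The main technical obstacle is the H\"older step in the range $19/2<u\le11$, where $\sum_n|\psi_u(n)|^3$ is not directly controlled by Theorem~\ref{thm4.1}. To circumvent this, one groups ${\sf n}(\alpha)$ with one of the factors $|f(\mathrm M_i)|^u$ via a 2-variable Fourier expansion, yielding a $\phi_u$-type Fourier coefficient to which Theorem~\ref{thm4.1} does apply on the admissible range $6\le u\le 11$. This restores the H\"older bound in the same form used in the proofs of Theorems~\ref{thm5.2} and~\ref{thm5.3}, after which both estimates of the theorem follow.
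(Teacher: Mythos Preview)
Your proposed argument has a genuine gap at the crucial step. The pointwise majorant ${\bf 1}_{\mathfrak p}(\alpha,\beta)\le{\sf n}(\alpha)+{\sf n}(\beta)$ is correct, but it places the smoothing weight on the \emph{wrong} variable. The Fourier coefficient $\phi_u(n)=\int_0^1{\sf n}(\gamma)|f(\gamma)|^u e(-n\gamma)\,\d\gamma$ only arises when ${\sf n}$ and $|f|^u$ share the \emph{same} argument. In $\iint{\sf n}(\alpha)|f(\mathrm M_1)|^u|f(\mathrm M_2)|^u|f(\mathrm M_3)|^6\,\d\alpha\,\d\beta$ the weight ${\sf n}(\alpha)$ depends on $\alpha$ while $|f(\mathrm M_i)|^u$ depends on $c_i\alpha+d_i\beta$; your ``2-variable Fourier expansion'' of ${\sf n}(\alpha)|f(\mathrm M_i)|^u$ produces coefficients of the form $\hat{\sf n}(a-c_in_i/d_i)\,\psi_u(n_i)$, still involving $\psi_u$ rather than $\phi_u$. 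Since $\sum_n|\psi_u(n)|^3$ is \emph{not} controlled by Theorem~\ref{thm4.1} for $u>25/3$ (the major arc contribution is genuinely of size $P^{3u-8}$), the H\"older step cannot be closed this way. Moreover, a single factor of ${\sf n}$ is insufficient: even ${\sf n}(\mathrm M_1)$ alone would leave an unmanageable $\psi_u(n_2)$ in the H\"older product.

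The paper's proof avoids this by inserting the \emph{product} partition $1=({\sf N}+{\sf n})(\mathrm M_1)\cdot({\sf N}+{\sf n})(\mathrm M_2)$, with the smoothing applied to the linear forms $\mathrm M_1,\mathrm M_2$ themselves. The key observation is that ${\sf N}(\mathrm M_1){\sf N}(\mathrm M_2)$ vanishes on $\mathfrak p$ (because both $\mathrm M_j$ being on major arcs forces $(\alpha,\beta)\in\mathfrak M\times\mathfrak M$). The ${\sf n}(\mathrm M_1){\sf n}(\mathrm M_2)$ term is then \emph{exactly} $L_u$ (resp.\ $M$), already bounded by Theorem~\ref{thm5.2} (resp.\ \ref{thm5.3}). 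The two cross-terms $M_{\sf Nn},M_{\sf nN}$ are handled not by Fourier analysis but by the linear change of variables $(\alpha,\beta)\mapsto(\mathrm M_1,\mathrm M_2)$, after which the integral factorises into a major-arc integral in one variable (controlled by \eqref{4.51}) times a H\"older combination of one-dimensional moments in the other (controlled by Lemma~\ref{lemma5.3}). This separation into an ${\sf nn}$-piece plus tractable ${\sf Nn}$-pieces is precisely what the additive majorant ${\sf n}(\alpha)+{\sf n}(\beta)$ cannot achieve.
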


\begin{proof}
Let ${\sf N}= {\sf N}_{P^4,P^{2/7}} $ and ${\sf n}=1-{\sf N}$. Then
\begin{equation}\label{8.4}
1 = \big({\sf N}(\mathrm M_1) + {\sf n}(\mathrm M_1)\big)\big({\sf N}(\mathrm M_2) 
+ {\sf n}(\mathrm M_2)\big).
\end{equation}
We note at once that whenever $(\alpha,\beta)\in\mathfrak p$, one has 
${\sf N}(\mathrm M_1){\sf N}(\mathrm M_2)=0$. The explanation for this observation is 
that whenever ${\sf N}(\mathrm M_1){\sf N}(\mathrm M_2)>0$, then it follows from 
Lemma \ref{majapprox} that $\mathrm M_j\in\mathfrak N_{P^4,2P^{2/7}}$ $(j=1,2)$. By 
taking suitable linear combinations of $\mathrm M_1$ and $\mathrm M_2$ we find that 
$\alpha$ and $\beta$ lie in $\mathfrak N_{P^4,AP^{2/7}}$, with some $A\ge 2$ depending 
only on the coefficients of $\mathrm M_1$ and $\mathrm M_2$. But 
$(\alpha,\beta)\in[0,1]^2$, and so  $(\alpha,\beta)\in\mathfrak M\times\mathfrak M$ for 
large enough $P$. This is not the case when $(\alpha,\beta)\in\mathfrak p$, as claimed.\par

With this observation in hand, we apply \eqref{8.4} within the integral on the left hand side 
of \eqref{minor41111} to conclude that
\begin{equation}\label{8.5}
\iint_{\mathfrak p}|f(\mathrm M_1)^{11}f(\mathrm M_2)^{11}f(\mathrm M_3)^4|
\d\alpha\d\beta \le M  + M_{\sf Nn} + M_{\sf nN},
\end{equation}
where
\begin{equation}\label{8.7}
M_{\sf Nn} = \int_0^1\!\!\int_0^1 {\sf N}(\mathrm M_1){\sf n}(\mathrm M_2)
|f(\mathrm M_1)^{11}f(\mathrm M_2)^{11} f(\mathrm M_3)^4| \,\mathrm d\alpha\,
\mathrm d\beta
\end{equation}
and $M_{\sf nN}$ is the integral in \eqref{8.7} with $\mathrm M_1$, $\mathrm M_2$ 
interchanged.\par

By symmetry in $\mathrm M_1$ and $\mathrm M_2$, it now suffices to estimate 
$M_{\sf Nn}$. Recalling the definition \eqref{5.1} of the linear forms $\mathrm M_i$, we put 
$D=|c_1d_2-c_2d_1|$ and note that $D>0$. Consider the linear transformation from 
$\mathbb R^2$ to $\mathbb R^2$, with $(\alpha,\beta)\mapsto (\alpha',\beta')$, defined by 
means of the relation
\begin{equation}\label{8.8}
\Big(\begin{array}{c} \alpha' \\ \beta'\end{array}\Big)=D^{-1}
\Big(\begin{array}{cc} c_1&d_1 \\ c_2& d_2\end{array}\Big)\Big(\begin{array}{c} \alpha \\ 
\beta\end{array}\Big). 
\end{equation}
Then $\mathrm M_1= D\alpha'$, $\mathrm M_2=D\beta'$, and $\alpha$ and $\beta$ are 
linear forms in $\alpha'$ and $\beta'$ with integer coefficients. By applying the 
transformation formula as a change of variables, one finds that
$$ M_{\sf Nn} = \iint_{\mathfrak B} {\sf N}(D\alpha'){\sf n}(D\beta')
|f(D\alpha')^{11}f(D\beta')^{11} f(A\alpha'+B\beta')^4|  \,\mathrm d\alpha'\,
\mathrm d\beta', $$
wherein $A,B$ are non-zero integers and $\mathfrak B$ is the image of $[0,1]^2$ under the 
transformation \eqref{8.8}. The parallelogram $\mathfrak B$ is covered by finitely many 
sets $[0,1]^2+\mathbf t$, with $\mathbf t\in\mathbb Z^2$. Since the integrand in the last 
expression for $M_{\sf Nn}$ is $\mathbb Z^2$-periodic it follows that
$$ M_{\sf Nn} \ll  \int_0^1\!\!\int_0^1 {\sf N}(D\alpha){\sf n}(D\beta)
|f(D\alpha)^{11}f(D\beta)^{11}f(A\alpha+B\beta)^4|\,\mathrm d\alpha\,\mathrm d\beta. 
$$
Here we have removed decorations from the variables of integration for notational simplicity. 

\par We now inspect all factors of the integrand in the latter upper bound that depend on 
$\beta$. By H\"older's inequality, Lemma \ref{lemma5.3} and obvious changes of variable, 
one obtains the estimate
\begin{align*}
\int_0^1  {\sf n}(D\beta)&|f(D\beta)^{11}f(A\alpha+B\beta)^4|\,\mathrm d\beta \\
&\ll  \biggl( \int_0^1  {\sf n}(D\beta)|f(D\beta)|^{77/5}\,\mathrm d\beta\biggr)^{5/7} 
\biggl( \int_0^1 |f(A\alpha+B\beta)|^{14}\,\mathrm d\beta \biggr)^{2/7}\\
& \ll P^\varepsilon \bigl( P^{67/6}\bigr)^{5/7}(P^{10})^{2/7}=P^{65/6+\varepsilon},
\end{align*}
uniformly in $\alpha\in\mathbb R$. Consequently, applying \eqref{4.51} in combination with 
yet another change of variable, we finally arrive at the bound 
$$ M_{\sf Nn} \ll P^{65/6+\varepsilon} \int_0^1 {\sf N}(D\alpha)|f(D\alpha)|^{11}
\,\mathrm d\alpha \ll P^{18-1/6+\varepsilon}. $$
We may infer thus far that $M_{\sf Nn}+M_{\sf nN}\ll P^{18-1/6+\varepsilon}$. On 
substituting this estimate into \eqref{8.5}, noting also the bound 
$M\ll P^{18-1/18+\varepsilon}$ supplied by Theorem \ref{thm5.3}, the conclusion 
\eqref{minor41111} is confirmed.\par

The proof of \eqref{minor6uu} is essentially the same, and we economise by making similar 
notational conventions. The exponents $11$ and $4$ that occur in \eqref{minor41111} must 
now be replaced by $u$ and $6$, respectively. The initial phase of the preceding argument 
then remains valid, and an appeal to Theorem \ref{thm5.2} delivers the bound
\begin{equation}\label{8.z}
\iint_{\mathfrak p}|f(\mathrm M_1)f(\mathrm M_2)|^u|f(\mathrm M_3)|^6
\d\alpha\d\beta \ll L_{\sf Nn} + L_{\sf nN}+P^{2u-2+\eta(u)+\varepsilon},
\end{equation}
where 
$$ L_{\sf Nn} \ll \int_0^1\!\!\int_0^1 {\sf N}(D\alpha){\sf n}(D\beta)   
|f(D\alpha)f(D\beta)|^{u} |f(A\alpha+B\beta)|^6  \,\mathrm d\alpha\,\mathrm d\beta. $$
Here, we isolate factors of the integrand that depend on $\beta$ and apply H\"older's 
inequality. Note that since $u\le 11$ we have $7u/4<20$. Thus, by Lemma \ref{lemma5.3}, 
\begin{align*}
\int_0^1  {\sf n}(D\beta)&|f(D\beta)^{u}f(A\alpha+B\beta)^6|\,\mathrm d\beta \\ 
&\ll \biggl( \int_0^1  {\sf n}(D\beta)|f(D\beta)|^{7u/4}\,\mathrm d\beta\biggr)^{4/7}
\biggl( \int_0^1 |f(A\alpha+B\beta)|^{14}\,\mathrm d\beta \biggr)^{3/7}\\
&\ll P^\varepsilon \bigl( P^{\frac{35}{24}u-\frac{5}{3}}\bigr)^{4/7}
\bigl( P^{10}\bigr)^{3/7}.
\end{align*}
Applying this bound, which is uniform in $\alpha \in \mathbb R$, together with \eqref{4.51}, 
we arrive at the estimate
$$L_{\sf Nn}\ll P^{\frac56 u+\frac{10}{3}+\varepsilon}
\int_0^1 {\sf N}(D\alpha)|f(D\alpha)|^u\,\mathrm d\alpha
\ll P^{\frac{11}6 u - \frac{2}3+\varepsilon}.$$
When $u\le 11$, the definition of $\eta(u)$ ensures that 
$\frac{11}{6}u-\frac{2}{3}\le 2u-2+\eta(u)$, and hence 
$L_{\sf Nn}+L_{\sf nN}\ll P^{2u-2+\eta(u)+\varepsilon}$. The conclusion 
\eqref{minor6uu} now follows by substituting this estimate into \eqref{8.z}.
\end{proof}

\section{Another mean value estimate}
This section is an update for quartic Weyl sums of our earlier work \cite{BWBull} on highly 
entangled mean values. We now attempt to avoid independence conditions on linear forms 
as far as the argument allows while incorporating the consequences of the recent bound 
\eqref{1.5}. We emphasise that throughout this section, we continue to work subject to the 
overall assumptions made at the outset of the previous section. We begin by examining the 
mean value
\begin{equation}\label{7.z1}
G_1=\int_0^1\!\!\int_0^1|f(\mathrm M_1)^2 f(\mathrm M_2)^4
f(\mathrm M_3)^4|\d\alpha\d\beta .
\end{equation}

\begin{lemma}\label{lemma6.1}
One has $G_1\ll P^{5+\varepsilon}$.
\end{lemma}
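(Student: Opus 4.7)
The plan is to mimic the opening of the proof of Theorem~\ref{thm5.1} and express $G_1$ as a one-dimensional arithmetic sum of products of Fourier coefficients, after which the bound will fall out of Hua's lemma, a uniform divisor-type bound for $\psi_2$, and Parseval. A direct appeal to Cauchy--Schwarz would only yield $P^{6+\varepsilon}$; the Fourier route gains a factor of $P$ by exploiting the fact that $\psi_2(n)$ is sharply concentrated at $n=0$.

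Both $|f(\gamma)|^2$ and $|f(\gamma)|^4$ are trigonometric polynomials with uniformly convergent Fourier expansions, so orthogonality (as in the proof of Theorem~\ref{thm5.1}) gives
$$G_1 = \sum_{(n_1,n_2,n_3)\in N}\psi_2(n_1)\psi_4(n_2)\psi_4(n_3),$$
where $N$ is the lattice of integer triples satisfying $c_1 n_1 + c_2 n_2 + c_3 n_3 = d_1 n_1 + d_2 n_2 + d_3 n_3 = 0$. Since the points $(c_i:d_i)$ are distinct in $\mathbb P^1(\mathbb Q)$, the lattice $N$ has rank one, so there exist non-zero integers $l_1, l_2, l_3$, depending only on the coefficients, with
$$G_1 = \sum_{m\in\mathbb Z}\psi_2(l_1 m)\psi_4(l_2 m)\psi_4(l_3 m).$$

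I would now split this sum according as $m=0$ or $m\ne 0$. For $m=0$ the contribution is $\psi_2(0)\psi_4(0)^2$; an immediate count gives $\psi_2(0) = P$, while Hua's lemma~\eqref{Hua4} supplies $\psi_4(0) \ll P^{2+\varepsilon}$, producing the admissible bound $P^{5+\varepsilon}$. For $m\ne 0$ the main input is a uniform divisor estimate for $\psi_2$: factoring $x^4-y^4 = (x-y)(x^3+x^2y+xy^2+y^3)$ shows that $x-y$ must divide $n$, and for each such divisor the resulting cubic in $y$ has at most three solutions, so $\psi_2(n) \ll \tau(|n|) \ll P^{\varepsilon}$ uniformly in $n\ne 0$. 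Combining this with Cauchy--Schwarz and Parseval's identity $\sum_n \psi_4(n)^2 = \int_0^1 |f(\alpha)|^8\,\mathrm d\alpha \ll P^{5+\varepsilon}$ (once again via Hua's lemma) then gives
$$\sum_{m\ne 0}\psi_2(l_1 m)\psi_4(l_2 m)\psi_4(l_3 m) \ll P^{\varepsilon}\sum_{n\in\mathbb Z}\psi_4(n)^2 \ll P^{5+\varepsilon},$$
and the two contributions together yield $G_1 \ll P^{5+\varepsilon}$. The only step requiring any care is the uniform divisor bound for $\psi_2$, which is nonetheless classical and brief; everything else is a routine application of Hua's lemma together with orthogonality.
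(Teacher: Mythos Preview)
Your proof is correct and is essentially the paper's argument recast in the Fourier-coefficient language of \S5: the paper counts solutions of the Diophantine system \eqref{10var} directly, splitting according as $x_1=x_2$ or $x_1\ne x_2$, which is exactly your split $m=0$ versus $m\ne 0$; the paper's divisor bound for $(x_1,x_2)$ when $N\ne 0$ is your bound $\psi_2(n)\ll P^\varepsilon$ for $n\ne 0$, and the paper's estimate $\int_0^1|f(b\alpha)f(c\alpha)|^4\,\mathrm d\alpha\ll P^{5+\varepsilon}$ is your Parseval step $\sum_n\psi_4(n)^2=\int_0^1|f|^8\,\mathrm d\alpha\ll P^{5+\varepsilon}$. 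Your formulation has the minor advantage of fitting seamlessly with the machinery already set up for Theorems~\ref{thm5.1}--\ref{thm5.3}.
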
 

\begin{proof}
This is essentially contained in \cite[Section 2]{BWpauc}, but we give a proof for 
completeness. Recall the definition \eqref{5.1} of the linear forms $\mathrm M_i$. By 
orthogonality, the integral $G_1$ is equal to the number of solutions of an associated 
pair of quartic equations. By taking suitable integral linear combinations of these two 
equations, we may assume that they take the shape
\begin{equation}\label{10var}
a(x_1^4-x_2^4)=b(x_3^4+x_4^4-x_5^4-x_6^4)=c(x_7^4+x_8^4-x_9^4-x_{10}^4), 
\end{equation}
for suitable natural numbers $a,b,c$. Thus, we see that $G_1$ is equal to the number of 
solutions of the Diophantine system \eqref{10var} with $x_i\le P$. For each of the $O(P)$ 
possible choices for $x_1$ and $x_2$ with $x_1=x_2$, it follows via orthogonality and 
\eqref{Hua4} that the number of solutions of this system in the remaining variables 
$x_3,\ldots,x_{10}$ is equal to
$$\biggl( \int_0^1 |f(\alpha)|^4\,\d\alpha\biggr)^2 \ll P^{4+\varepsilon}.$$
Consequently, the contribution to $G_1$ from this first class of solutions is 
$O(P^{5+\varepsilon})$. Now consider solutions of \eqref{10var} in which $x_1\neq x_2$. 
By orthogonality, the total number of choices for $x_3, \ldots, x_{10}$ satisfying the 
rightmost equation in \eqref{10var} is
$$ \int_0^1 |f(b\alpha)f(c\alpha)|^4\d\alpha. $$
Schwarz's inequality in combination with \eqref{4.6} shows this integral to be 
$O(P^{5+\varepsilon})$. However, for any fixed choice of $x_3,\ldots,x_{10}$ in this 
second class of solutions, one has $x_1\ne x_2$, and hence the fixed integer 
$N=b(x_3^4+x_4^4-x_5^4-x_6^4)$ is non-zero. But it follows from \eqref{10var} that 
$x_1^2-x_2^2$ and $x_1^2+x_2^2$ are each divisors of $N$. Thus, a standard divisor 
function estimate shows that the number of choices for $x_1$ and $x_2$ is 
$O(P^\varepsilon)$, and we conclude that the contribution to $G_1$ from this second class 
of solutions is $O(P^{5+\eps})$. Adding these two contributions, we obtain the bound 
claimed in the statement of the lemma.
\end{proof}

We next examine the mean value
\begin{equation}\label{7.z3}
G_2=\int_0^1\!\!\int_0^1 |f(\mathrm M_1)^2 f(\mathrm M_2)^4f(\mathrm M_3)^4
f(\mathrm M_4)^4f(\mathrm M_5)^4|\d\alpha\d\beta .
\end{equation}

\begin{theorem}\label{thm6.2} 
One has $G_2 \ll P^{11+\varepsilon}$. 
\end{theorem}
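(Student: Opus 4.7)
The plan is to adapt the divisor-function argument of Lemma~\ref{lemma6.1} (which yields $G_1\ll P^{5+\varepsilon}$) to the eighteen-variable mean value $G_2$, augmenting it with a Weyl-differencing step in the spirit of \cite{BWBull} and the fourteenth-moment estimate~\eqref{1.5}. By orthogonality, $G_2$ counts integer tuples with $|x_i|\le P$ satisfying the pair
\begin{align*}
c_1(x_1^4-x_2^4)+\sum_{j=2}^{5}c_j\Sigma_j &= 0,\\
d_1(x_1^4-x_2^4)+\sum_{j=2}^{5}d_j\Sigma_j &= 0,
\end{align*}
where $\Sigma_2=x_3^4+x_4^4-x_5^4-x_6^4$ and $\Sigma_3,\Sigma_4,\Sigma_5$ are analogous signed four-term sums in the remaining variables. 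Following Lemma~\ref{lemma6.1}, I would split the counting according to whether $x_1=x_2$ or $x_1\ne x_2$.

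For the diagonal case $x_1=x_2$, the $O(P)$ admissible pairs reduce the remaining count to the sixteen-variable mean value $G_0=\int_0^1\!\!\int_0^1\prod_{j=2}^5|f(\mathrm M_j)|^4\,\d\alpha\,\d\beta$. A Cauchy-Schwarz split pairing $\{\mathrm M_2,\mathrm M_3\}$ against $\{\mathrm M_4,\mathrm M_5\}$, followed by the linear change of variables separating each pair (in the spirit of the proof of Theorem~\ref{thm5.4}) and Hua's lemma $\int_0^1|f|^8\,\d\alpha\ll P^{5+\varepsilon}$, yields $G_0\ll P^{10+\varepsilon}$. Hence this case contributes $O(P^{11+\varepsilon})$ to $G_2$, which is already on target.

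For the off-diagonal case $x_1\ne x_2$, the divisor bound gives only $P^\varepsilon$ pairs $(x_1,x_2)$ per non-zero $N=x_1^4-x_2^4$, but the remaining sixteen-variable single-equation count is of natural order $\int_0^1|f|^{16}\,\d\alpha\asymp P^{12+\varepsilon}$, one power of $P$ larger than the target. To save this power I would apply Weyl differencing to one of the fourth-power factors, say $|f(\mathrm M_2)|^4$: writing
\[
|f(\mathrm M_2)|^2=P+2\,\mathrm{Re}\sum_{0<h<P}\sum_x e\big(\mathrm M_2((x+h)^4-x^4)\big)
\]
and squaring, we decompose $|f(\mathrm M_2)|^4$ into a main term of size $P^2$ plus single- and double-differencing sums involving cubic exponential sums indexed by auxiliary parameters $h_1,h_2$. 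The main term reduces matters to a fourteen-variable mean value of profile $(2,4,4,4)$ that is handled comfortably within $P^{9+\varepsilon}$ by a profile-adapted variant of Lemma~\ref{lemma6.1}. For the differencing contributions, Cauchy-Schwarz in the $(h_1,h_2)$-aggregate decouples the new cubic sums from the surviving quartic factors $|f(\mathrm M_3)f(\mathrm M_4)f(\mathrm M_5)|^4$; after a change of variables the latter can be controlled using the fourteenth-moment estimate~\eqref{1.5}, while the cubic mean values arising from the differenced polynomials are bounded by classical estimates for cubic Weyl sums.

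The chief obstacle lies in the off-diagonal analysis: the Weyl-differencing step and the subsequent Cauchy-Schwarz must be arranged so that \eqref{1.5} is applied to precisely the right configuration of quartic sums, thereby extracting the $P^{1/6}$ saving over the cubic moments bound of Theorem~\ref{thm5.1} that is needed to bring the exponent from $67/6$ down to the target~$11$.
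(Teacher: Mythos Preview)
Your treatment of the diagonal contribution $x_1=x_2$ is correct and already delivers $O(P^{11+\varepsilon})$. The off-diagonal analysis, however, contains a genuine gap. Once you invoke the divisor bound, the off-diagonal count is bounded by $P^\varepsilon$ times the single sixteen-variable equation count $\int_0^1\prod_{j=2}^{5}|f(c'_j\alpha)|^4\,\mathrm d\alpha$, and this integral is genuinely of order $P^{12}$ --- the neighbourhood of $\alpha=0$ already contributes that much --- so no device applied at that one-dimensional stage can recover the missing power. Your remedy is to retreat to the two-dimensional integral and expand $|f(\mathrm M_2)|^4$ by squaring the identity $|f|^2=P+(\text{sums of cubic Weyl sums})$. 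The main term $P^2$ is indeed smaller than the $P^3$ of the standard differencing lemma, but this comes at the price of a tail consisting of bilinear expressions in cubic sums $g_{h_1},g_{h_2}$ still entangled with $|f(\mathrm M_1)|^2\prod_{j\ge 3}|f(\mathrm M_j)|^4$ across the double integral. The ``Cauchy--Schwarz in the $(h_1,h_2)$-aggregate'' you sketch does not obviously close: decoupling the cubic from the quartic factors in the naive way produces a factor $\int_0^1\!\!\int_0^1|f(\mathrm M_3)f(\mathrm M_4)f(\mathrm M_5)|^8\,\mathrm d\alpha\,\mathrm d\beta$ of size about $P^{20}$, far too large. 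You are right to flag this as the chief obstacle; as written, the argument is incomplete.

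The paper's proof sidesteps the diagonal/off-diagonal dichotomy and instead iterates the standard inequality~\eqref{6.4}, namely $|f(\gamma)|^4\ll P^3+P\sum_{h\ne 0}u_h\,e(\gamma h)$ with $u_h\ll P^\varepsilon$, twice. After normalising so that $\mathrm M_4=d_4\beta$ and $\mathrm M_5=c_5\alpha$, one first applies \eqref{6.4} with $\gamma=\mathrm M_4$ to the auxiliary profile-$(2,4,4,4)$ mean value $G_3$. The main term is $P^3G_1\ll P^{8+\varepsilon}$ by Lemma~\ref{lemma6.1}, while summing the tail over $h$ relaxes one of the two equations and leaves the one-dimensional integral $\int_0^1|f(c_1\alpha)^2f(c_2\alpha)^4f(c_3\alpha)^4|\,\mathrm d\alpha\ll P^2\int_0^1|f|^8\,\mathrm d\alpha\ll P^{7+\varepsilon}$; hence $G_3\ll P^{8+\varepsilon}$. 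A second application of \eqref{6.4}, now with $\gamma=\mathrm M_5$ inside $G_2$, gives main term $P^3G_3\ll P^{11+\varepsilon}$ and tail bounded by $P^{1+\varepsilon}\int_0^1|f(d_1\alpha)^2f(d_2\alpha)^4f(d_3\alpha)^4f(d_4\alpha)^4|\,\mathrm d\alpha\ll P^{1+\varepsilon}\int_0^1|f|^{14}\,\mathrm d\alpha\ll P^{11+\varepsilon}$ by~\eqref{1.5}. Thus Lemma~\ref{lemma6.1} serves as a building block rather than a template, and the fourteenth-moment bound enters exactly once, in a clean one-dimensional form, rather than through an uncontrolled decoupling of cubic and quartic sums.
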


Note that in this result we require the five linear forms $\mathrm M_j$ to be pairwise 
independent. Therefore, the result will be of use only in cases where the profile of 
\eqref{1.1} has $r_5\ge 1$. The mean value in Theorem \ref{thm6.2} involves $18$ Weyl 
sums and should therefore be compared with the bound $I_6 \ll P^{67/6+\varepsilon}$ 
provided by Theorem \ref{thm5.1}. The extra savings that we obtain here are the essential 
stepping stone toward Theorem \ref{theorem1.2}.

\begin{proof}[The proof of Theorem \ref{thm6.2}]
As in the proof of Lemma \ref{lemma6.1}, it follows from orthogonality that the integral 
$G_2$ is equal to the number of solutions of an associated pair of quartic equations. Taking 
suitable integral linear combinations of these two equations, we reduce to the situation 
where $c_4=d_5=0$, and consequently $\mathrm M_4 = d_4\beta$ and 
$\mathrm M_5=c_5\alpha$. Motivated by this observation, we begin our deliberations by 
estimating the auxiliary mean value
$$G_3=\int_0^1\!\!\int_0^1 |f(\mathrm M_1)^2 f(\mathrm M_2)^4
f(\mathrm M_3)^4f(d_4\beta)^4|\d\alpha\d\beta .$$

\par The Weyl differencing argument \cite[Lemma 2.3]{hlm} shows that there are real 
numbers $u_h$ with $u_h\ll P^\varepsilon$ for which
\begin{equation}\label{6.4}
|f(\gamma)|^4 \ll P^3 + P \sum_{1\le |h|\le 2P^4} u_h e(\gamma h).
\end{equation} 
We apply this relation with $\gamma=\mathrm M_4$ to the mean value $G_3$ and infer 
that
\begin{equation}\label{7.z2}
G_3\ll P^3 G_1 + PG_4,
\end{equation}
where $G_1$ is the mean value defined in \eqref{7.z1}, and
$$ G_4 = \sum_{1\le |h|\le 2P^4} u_h\int_0^1\!\!\int_0^1 |f(\mathrm M_1)^2 
f(\mathrm M_2)^4f(\mathrm M_3)^4|e(d_4 h\beta) \d\alpha\d\beta. $$
By orthogonality, the double integral on the right hand side here is equal to the number of 
solutions of the system of Diophantine equations
\begin{align}
c_1(x_1^4-y_1^4) + c_2(x_2^4+x_3^4-y_2^4-y_3^4)+ c_3(x_4^4+x_5^4-y_4^4-y_5^4) 
&\hskip-2.5mm&= 0 \label{6.1}\\
d_1(x_1^4-y_1^4) + d_2(x_2^4+x_3^4-y_2^4-y_3^4)+ d_3(x_4^4+x_5^4-y_4^4-y_5^4) 
&\hskip-2.5mm&+\hskip.3mm d_4h=0 \notag
\end{align}
with $x_i\le P$ and $y_i\le P$. We may sum over $h\neq 0$ and replace $u_h$ by its upper 
bound. Then we find that $G_4\ll P^\varepsilon G_5$, where $G_5$ is the number of 
solutions of the equation \eqref{6.1} with the same conditions on $x_i$ and $y_i$. By 
orthogonality again, we deduce that
$$G_5=\int_0^1 |f(c_1\alpha)^2f(c_2\alpha)^4f(c_3\alpha)^4|\d\alpha. $$
For $1\le i\le 3$ the linear form $\mathrm M_i$ is linearly independent of 
$\mathrm M_4=d_4\beta$, and thus $c_1c_2c_3\neq 0$. The trivial bound 
$|f(c_1\alpha)|^2\ll P^2$ therefore combines with Schwarz's inequality and \eqref{4.6} to 
award us the bound
$$G_5\ll P^2\int_0^1 |f(\gamma)|^8\d\gamma \ll P^{7+\varepsilon}. $$
We therefore deduce that $G_4\ll P^{7+2\varepsilon}$. Meanwhile, the estimate 
$G_1\ll P^{5+\varepsilon}$ is available from Lemma \ref{lemma6.1}. On substituting these 
bounds into \eqref{7.z2}, we conclude thus far that $G_3\ll P^{8+\varepsilon}$.\par

We now repeat this argument with $\gamma=\mathrm M_5$ in \eqref{6.4}, applying the 
resulting inequality within the integral $G_2$ defined in \eqref{7.z3}. Thus we obtain
\begin{equation}\label{7.z4}
G_2\ll P^3G_3+P^{1+\varepsilon}G_6,
\end{equation}
where $G_6$ denotes the number of solutions of the Diophantine equation
$$d_1(x_1^4-y_1^4)+d_2(x_2^4+x_3^4-y_2^4-y_3^4)+d_3(x_4^4+x_5^4-y_4^4-y_5^4) 
+ d_4(x_6^4+x_7^4-y_6^4-y_7^4)=0,$$
with $x_i\le P$ and $y_i\le P$. By orthogonality,
$$G_6=\int_0^1 |f(d_1\alpha)^2f(d_2\alpha)^4f(d_3\alpha)^4f(d_4\alpha)^4|\d\alpha.$$
One may confirm that $d_1d_2d_3d_4\neq 0$ by arguing as above, and so an application of 
\eqref{T} in combination with \eqref{1.5} reveals that
$$G_6\le \sum_{i=1}^4 \int_0^1 |f(d_i\alpha)|^{14} \d\alpha = 
4 \int_0^1 |f(\gamma)|^{14}\d\gamma\ll P^{10+\varepsilon}. $$
The conclusion of 
the theorem now follows on substituting this bound together with our earlier estimate for 
$G_3$ into \eqref{7.z4}.
\end{proof}

\section{The circle method} In this section we prepare the ground to advance to the proofs 
of Theorems \ref{theorem1.1} and \ref{theorem1.2}. A preliminary man\oe uvre is in order. 
Let $k=0$ or 1, and let $N_k(P)=N_k$ denote the number of solutions of the system 
\eqref{1.1} with $k\le x_j\le P$ $(1\le j\le s)$. Note that the equations \eqref{1.1} are 
invariant under the $s$ mappings $x_j\mapsto -x_j$. This observation shows that
\begin{equation}
\label{sandw}  2^s N_1(P) \le {\mathscr N}(P) \le 2^s N_0 (P).
\end{equation}
The goal is then to establish the formulae
\begin{equation}\label{sandw2}
\lim_{P\to\infty} 2^s P^{8-s} N_k(P) =  \mathfrak I \mathfrak S  \quad (k=0, 1),
\end{equation}
since then \eqref{1.4} follows immediately from \eqref{sandw} and the sandwich principle. 
Thus, we now launch the Hardy-Littlewood method to evaluate the counting functions 
$N_k(P)$. This involves the exponential sum
\begin{equation} \label{ffk}
f_k(\alpha)=\sum_{k\le x\le P} e(\alpha x^4).
\end{equation}
This sum is, of course, an instance of the sum \eqref{ff}, where we have been deliberately 
imprecise about the lower end of the interval of summation. The results we have formulated 
so far are indeed independent of the choice of $k$, and it is only now and temporarily 
where this detail matters. We require the linear forms 
$\Lambda_j=\Lambda_j(\alpha,\beta)$, defined by
$$\Lambda_j(\alpha,\beta)=a_j\alpha + b_j \beta\quad (1\le j\le s)$$
that are associated with the equations \eqref{1.1}. We then put
\begin{equation}\label{7.2}
{\mathscr F}_k(\alpha,\beta) = f_k(\Lambda_1)f_k(\Lambda_2)\cdots f_k(\Lambda_s),
\end{equation}
and observe that, by orthogonality, one has
\begin{equation}\label{7.3}
N_k(P)=\int_0^1\!\!\int_0^1{\mathscr F}_k(\alpha,\beta)\,\mathrm d\alpha
\,\mathrm d\beta.
\end{equation} 

Subject to conditions milder than those imposed in Theorems \ref{theorem1.1} and 
\ref{theorem1.2} we reduce the evaluation of the integral \eqref{7.3} to the estimation of 
its minor arc part. With this end in mind we define the major arcs $\mathfrak V$ as the 
union of the rectangles
$$ \mathfrak V(q,a,b)=\{(\alpha,\beta)\in[0,1]^2: \text{$|\alpha-a/q|\le P^{-31/8}$ and 
$|\beta-b/q|\le P^{-31/8}$}\},$$
with $0\le a,b\le q$, $(a,b,q)=1$ and $1\le q\le P^{1/8}$.\par   

Define the generating functions
$$ S(q,c) = \sum_{x=1}^q e(cx^4/q)\quad \text{and}\quad 
v(\gamma) = \int_0^P e(\gamma t^4)\,\mathrm d t. $$
Then, given $(\alpha,\beta)\in [0,1]^2$, if we put $\gamma=\alpha -a/q$ and 
$\delta=\beta-b/q$ for some $a,b\in\mathbb Z$ and $q\in\mathbb N$, one concludes from 
\eqref{ffk} and \cite[Theorem 4.1]{hlm} that
\begin{equation}\label{Fapprox}
f_k(\Lambda_j)=q^{-1}S\left(q,\Lambda_j(a,b)\right)v\left(\Lambda_j(\gamma,\delta)\right)
+ O\left(q^{1/2+\varepsilon}(1+P^4|\Lambda_j(\gamma,\delta)|)^{1/2}\right). 
\end{equation}
Note that the right hand side here is independent of $k$. We multiply these approximations 
for $1\le j\le s$. This brings into play the expressions
$$\mathscr S(q,a,b)=q^{-s} \prod_{j=1}^s S\left(q,\Lambda_j(a,b)\right) \quad 
\text{and}\quad \mathscr V(\gamma,\delta) = \prod_{j=1}^s 
v\left(\Lambda_j(\gamma,\delta)\right).$$ 
If $(\alpha,\beta)\in \mathfrak V(q,a,b)\subseteq \mathfrak V$ then the error term in 
\eqref{Fapprox} is $O(P^{1/8+\varepsilon})$, and we infer that
$$\mathscr F_k(\alpha,\beta)=\mathscr S(q,a,b)\mathscr V(\gamma,\delta)+
O(P^{s-7/8+\varepsilon}). $$
Since $\mathfrak V$ is a set of measure $O(P^{-59/8})$, when we integrate this formula 
for $\mathscr F_k(\alpha, \beta)$ over $\mathfrak V$, we obtain  the asymptotic relation
$$\iint_{\mathfrak V} \mathscr F_k(\alpha,\beta) \,\mathrm d\alpha\, \mathrm d\beta =
\mathfrak S(P^{1/8})\mathfrak J^*(P^{1/8})+O(P^{s-33/4+\varepsilon}),$$
where, for $1\le Q\le P$ we define
\begin{align*}
\mathfrak S (Q)&=\sum_{q\le Q}\underset{(a,b,q)=1}{\sum_{a=1}^q\sum_{b=1}^q}
\mathscr S(q,a,b),\\
\mathfrak J^*(Q)&=\iint_{\mathfrak U(Q)} \mathscr V(\gamma,\delta)\,\mathrm d\gamma\, 
\mathrm d\delta ,
\end{align*}
and $\mathfrak U(Q)=[-QP^{-4},QP^{-4}]^2$.\par

At this point, we require some more information concerning the matrix of coefficients, and 
we shall suppose that $q_0\ge 15$. Then $s\ge 16$, and we may apply 
\cite[Lemma 3.3]{BW21} to conclude that 
$\mathfrak S(Q)=\mathfrak S+O(Q^{\varepsilon -1})$. Further, we have
$$ \int_{-P}^P e(\gamma t^4) \,\mathrm dt = 2 v(\gamma),$$
and thus \cite[Lemma 3.1]{BW21} shows that the limit \eqref{1.2} exists, and that we 
have $2^s\mathfrak J^*(Q)=P^{s-8}\mathfrak J+O(P^{s-8}Q^{-1/4})$.
We summarise these deliberations in the following lemma.

\begin{lemma}\label{lem7.1} Suppose that $q_0\ge 15$ and that $k\in \{0,1\}$. Then
$$ \iint_{\mathfrak V} \mathscr F_k(\alpha,\beta) \,\mathrm d\alpha\, \mathrm d\beta = 
2^{-s}P^{s-8}\mathfrak S\mathfrak J + O(P^{s-8-1/32}). $$
\end{lemma}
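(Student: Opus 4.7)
The plan is to formalize the outline already given in the text immediately preceding the lemma. First, on a single rectangle $\mathfrak V(q,a,b) \subseteq \mathfrak V$, one writes $\alpha = a/q + \gamma$, $\beta = b/q + \delta$ with $|\gamma|, |\delta| \le P^{-31/8}$ and $q \le P^{1/8}$, so that $1 + P^4|\Lambda_j(\gamma,\delta)| \ll 1$ uniformly. The approximation \eqref{Fapprox} then reduces to
$$ f_k(\Lambda_j) = q^{-1}S(q,\Lambda_j(a,b))v(\Lambda_j(\gamma,\delta)) + O(P^{1/8+\varepsilon}). $$
Combining this with the trivial bound $f_k(\Lambda_j) \ll P$ for the remaining factors and expanding the resulting product over $1 \le j \le s$ yields the pointwise identity
$$ \mathscr F_k(\alpha,\beta) = \mathscr S(q,a,b)\mathscr V(\gamma,\delta) + O(P^{s-7/8+\varepsilon}) $$
on $\mathfrak V(q,a,b)$, noting in particular that the right-hand side is independent of $k$.

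Next, I would integrate this approximation over $\mathfrak V$. A direct count gives $\mathrm{meas}(\mathfrak V) \ll \sum_{q \le P^{1/8}} q^2 \cdot P^{-31/4} \ll P^{-59/8}$, so the accumulated pointwise error contributes $O(P^{s-7/8+\varepsilon-59/8}) = O(P^{s-33/4+\varepsilon})$. After a change of variables $(\alpha,\beta) \mapsto (\gamma,\delta)$ within each rectangle, the main term collapses to $\mathfrak S(P^{1/8})\mathfrak J^*(P^{1/8})$, exactly as asserted in the text.

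Finally, I would invoke Lemmata 3.1 and 3.3 of the companion paper \cite{BW21}, whose applicability is secured by the assumption $q_0 \ge 15$ (forcing in particular $s \ge 16$). These yield $\mathfrak S(P^{1/8}) = \mathfrak S + O(P^{-1/8+\varepsilon})$ and $2^s \mathfrak J^*(P^{1/8}) = P^{s-8}\mathfrak J + O(P^{s-8-1/32})$. Since $\mathfrak S(Q) \ll 1$ and $\mathfrak J^*(Q) \ll P^{s-8}$ uniformly, multiplying the two asymptotics preserves the error, and the dominant contribution is the $Q^{-1/4}$ loss from the singular integral, which with $Q = P^{1/8}$ becomes $O(P^{s-8-1/32})$. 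Since $-33/4 < -8 - 1/32$, the earlier error from the pointwise approximation is absorbed without difficulty, and the lemma follows.

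There is no real obstacle here; the proof is essentially a bookkeeping exercise, and the only judgement call is the choice $Q = P^{1/8}$, which simultaneously balances the width of the major arcs against the $Q^{-1/4}$ saving from \cite[Lemma 3.1]{BW21} and the $Q^{-1+\varepsilon}$ saving from \cite[Lemma 3.3]{BW21}, while keeping the Weyl-type error $O(P^{1/8+\varepsilon})$ in \eqref{Fapprox} small enough that $s$ factors can be handled by the trivial measure bound on $\mathfrak V$.
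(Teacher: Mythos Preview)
Your proposal is correct and follows essentially the same route as the paper: the paper's proof is precisely the text immediately preceding the lemma, and you have faithfully reproduced each step—the pointwise approximation via \eqref{Fapprox}, the measure bound on $\mathfrak V$, and the appeal to \cite[Lemmata 3.1 and 3.3]{BW21} under $q_0\ge 15$. There is nothing to add.
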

 
The major arcs in Lemma \ref{lem7.1} are certainly too slim for efficient use of Weyl type 
inequalities on the complementary set. A pruning argument allows us to enlarge the major 
arcs considerably. Let $\mathfrak W$ denote the union of the rectangles 
$$\mathfrak W(q,a,b)=\{(\alpha,\beta)\in[0,1]^2: \text{$|q\alpha-a|\le P^{-3}$ and 
$|q\beta-b|\le P^{-3}$}\},$$
with $1\le q\le P$, $0\le a,b \le q$ and $(a,b,q)=1$. Then 
$\mathfrak V\subset \mathfrak W$, and we proceed to estimate the contribution from 
$\mathfrak W\setminus \mathfrak V$ to the integral \eqref{7.3}. 
A careful application of \cite[Theorem 4.2]{hlm} shows that $S(q,c) \ll q^{3/4}(q,c)^{1/4}$. 
Further, if $V(\gamma) = P(1+P^4|\gamma|)^{-1/4}$, then by \cite[Theorem 7.3]{hlm}, 
one has $v(\gamma)\ll V(\gamma)$. Hence, whenever 
$(\alpha,\beta)\in \mathfrak W(q,a,b)$ with $q\le P$, one deduces from \eqref{Fapprox} 
that 
$$f_k(\Lambda_j)\ll q^{-1/4}\left( q,\Lambda_j(a,b)\right)^{1/4} 
V\left( \Lambda_j(\alpha-a/q,\beta-b/q)\right) + P^{1/2+\varepsilon}.$$
It is immediate that the first term on the right hand side here always dominates the second, 
and therefore, 
$${\mathscr F}_k(\alpha,\beta)\ll q^{-s/4}\prod_{j=1}^s
\left( q,\Lambda_j(a,b)\right)^{1/4} V\left( \Lambda_j(\alpha-a/q,\beta-b/q)\right). $$

\par We integrate over $\mathfrak W\setminus \mathfrak V$. The result is a sum over 
$q\le P$ in which we consider the portion $q\le P^{1/8}$ separately. This yields the bound
\begin{equation}\label{8.z1}
\iint_{\mathfrak W\setminus\mathfrak V}\mathscr F_k(\alpha,\beta)\,\mathrm d\alpha\, 
\mathrm d\beta \ll  K_1(P^{1/8})+K_2(P^{1/8}),
\end{equation}
where for $1\le Q\le P$, we write
$$K_1(Q)=\sum_{q\le Q}\underset{(a,b,q)=1}{\sum_{a=1}^q\sum_{b=1}^q} q^{-s/4}
\prod_{j=1}^s \left( q,\Lambda_j(a,b)\right)^{1/4}\iint_{\mathfrak B(Q)}
\prod_{j=1}^sV(\Lambda_j) \,\mathrm d\alpha\,\mathrm d\beta ,$$
with $\mathfrak B(Q)=[-1,1]^2\setminus \mathfrak U(Q)$, and
$$K_2(Q)=\sum_{Q<q\le P}\underset{(a,b,q)=1}{\sum_{a=1}^q\sum_{b=1}^q}
q^{-s/4}\prod_{j=1}^s (q,\Lambda_j(a,b))^{1/4}\iint_{[-1,1]^2}
\prod_{j=1}^s V(\Lambda_j) \,\mathrm d\alpha\,\mathrm d\beta.$$
Still subject to the condition $q_0\ge 15$, the proof of \cite[Lemma 3.2]{BW21} shows that
$$ \sum_{q>Q}\underset{(a,b,q)=1}{\sum_{a=1}^q\sum_{b=1}^q} q^{-s/4}
\prod_{j=1}^s(q,\Lambda_j(a,b))^{1/4} \ll \sum_{q>Q}q^{\varepsilon-2}\ll 
Q^{\varepsilon-1},$$
and similarly, the proof of \cite[Lemma 3.1]{BW21} delivers the bound
$$ \iint_{\mathfrak B(Q)}\prod_{j=1}^s V(\Lambda_j)\,\mathrm d\alpha\,\mathrm d\beta 
\ll P^{s-8}Q^{-1/4}. $$
Thus we deduce that $K_1(P^{1/8})+K_2(P^{1/8})\ll P^{s-8-1/32}$. Substituting this 
estimate into \eqref{8.z1}, and then recalling Lemma \ref{lem7.1}, we see that in the latter 
lemma we may replace $\mathfrak V$ by $\mathfrak W$. This establishes the following 
theorem.

\begin{theorem}\label{thm7.2}
Suppose that $q_0\ge 15$ and that $k\in \{0,1\}$. Then
$$\iint_{\mathfrak W}\mathscr F_k(\alpha,\beta)\,\mathrm d\alpha\, \mathrm d\beta = 
2^{-s} P^{s-8}\mathfrak S\mathfrak I + O(P^{s-8-1/32}). $$
\end{theorem}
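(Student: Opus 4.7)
The plan is to combine Lemma \ref{lem7.1} with a pruning argument that absorbs the annulus $\mathfrak W \setminus \mathfrak V$ into the error term. The substance of the theorem is thus the bound
$$\iint_{\mathfrak W\setminus\mathfrak V}\mathscr F_k(\alpha,\beta)\,\mathrm d\alpha\,\mathrm d\beta \ll P^{s-8-1/32},$$
since Lemma \ref{lem7.1} already delivers the main term with an acceptable error when integrating over $\mathfrak V$, and additivity of the integral then gives the stated formula over $\mathfrak W$.

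For the pruning, I would begin on a generic rectangle $\mathfrak W(q,a,b)$ by inserting the approximation \eqref{Fapprox} into each factor of the product \eqref{7.2} defining $\mathscr F_k$. The two standard inputs are the Weyl-type bound $S(q,c) \ll q^{3/4}(q,c)^{1/4}$ (from \cite[Theorem 4.2]{hlm}) and the oscillatory integral bound $v(\gamma) \ll V(\gamma) = P(1+P^4|\gamma|)^{-1/4}$ (from \cite[Theorem 7.3]{hlm}). These give, for each $j$,
$$f_k(\Lambda_j) \ll q^{-1/4}(q,\Lambda_j(a,b))^{1/4} V\bigl(\Lambda_j(\alpha-a/q,\beta-b/q)\bigr) + P^{1/2+\varepsilon}.$$
Since $V \gg P^{3/4}$ on $\mathfrak W$ and $(q,\cdot)^{1/4} \ge 1$, the first term always dominates the error $P^{1/2+\varepsilon}$, so the product factorises cleanly as
$$\mathscr F_k(\alpha,\beta) \ll q^{-s/4}\prod_{j=1}^s(q,\Lambda_j(a,b))^{1/4}\prod_{j=1}^s V(\Lambda_j(\alpha-a/q,\beta-b/q)).$$

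Integrating over $\mathfrak W\setminus \mathfrak V$ and splitting the sum over denominators at $q = P^{1/8}$ leads to the two pieces $K_1(P^{1/8})$ and $K_2(P^{1/8})$ displayed after \eqref{8.z1}. For $q \le P^{1/8}$, the box of $(\alpha,\beta)$ missing is exactly $\mathfrak U(P^{1/8})$, so the integral is over the complement $\mathfrak B(P^{1/8})$; for larger $q$, we may integrate over all of $[-1,1]^2$. At this point the two auxiliary estimates quoted from the companion paper finish the job: the sum over moduli is controlled by \cite[Lemma 3.2]{BW21}, yielding $\sum_{q>Q} \cdots \ll Q^{\varepsilon-1}$ when $q_0 \ge 15$; while the singular-integral-style bound from \cite[Lemma 3.1]{BW21} provides $\iint_{\mathfrak B(Q)} \prod V(\Lambda_j) \ll P^{s-8}Q^{-1/4}$ and an $O(P^{s-8})$ bound on the full box. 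Taking $Q = P^{1/8}$ and combining these yields both $K_1(P^{1/8})$ and $K_2(P^{1/8})$ of size $O(P^{s-8-1/32})$, as required.

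The main obstacle is really bookkeeping: verifying uniformly on $\mathfrak W$ that the error term $P^{1/2+\varepsilon}$ in \eqref{Fapprox} never competes with the main term, so that one may drop it before taking the product of $s$ such approximations (otherwise expanding the product would create many mixed terms to estimate). This rests on the observation that on $\mathfrak W(q,a,b)$ one has $|\Lambda_j(\alpha-a/q,\beta-b/q)| \le Cq^{-1}P^{-3}$, and hence $V(\Lambda_j(\alpha-a/q,\beta-b/q)) \gg P^{3/4}$, so the first term is $\gg q^{-1/4}P^{3/4}$, which dominates $P^{1/2+\varepsilon}$ for $q \le P$. Once this is in place, the hypothesis $q_0 \ge 15$ (which ensures the two crucial inputs from \cite{BW21} are applicable) does the remainder of the work, and the conclusion is immediate from Lemma \ref{lem7.1} by additivity.
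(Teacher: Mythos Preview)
Your proposal is correct and follows essentially the same route as the paper: approximate $f_k(\Lambda_j)$ on each $\mathfrak W(q,a,b)$ via \eqref{Fapprox} together with the bounds $S(q,c)\ll q^{3/4}(q,c)^{1/4}$ and $v(\gamma)\ll V(\gamma)$, observe that the main term dominates the $O(P^{1/2+\varepsilon})$ error throughout $\mathfrak W$, split the resulting sum at $q=P^{1/8}$ into the two pieces $K_1(P^{1/8})$ and $K_2(P^{1/8})$, and then invoke \cite[Lemmata 3.1 and 3.2]{BW21} to bound each by $O(P^{s-8-1/32})$ before adding the contribution of $\mathfrak V$ from Lemma~\ref{lem7.1}. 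This is exactly the paper's argument.
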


Let $\mathfrak w= [0,1]^2\setminus \mathfrak W$ denote the minor arcs. Then, in view of 
\eqref{sandw2}, \eqref{7.3} and Theorem \ref{thm7.2}, whenever $q_0\ge 15$, the 
asymptotic relation \eqref{1.4} is equivalent to the minor arc estimate
\begin{equation}\label{7minor}
\iint_{\mathfrak w}\mathscr F_k(\alpha,\beta) \,\mathrm d\alpha\, \mathrm d\beta = 
o(P^{s-8}),
\end{equation} 
as $P\to \infty$, and in the next two sections we shall confirm this subject to the hypotheses 
imposed in Theorems \ref{theorem1.1} and \ref{theorem1.2}.

\section{The proof of Theorem \ref{theorem1.1}} 
At the core of the proof of Theorem \ref{theorem1.1} we require two minor arc estimates.

\begin{lemma}\label{lem8.1}
Let $c_1,c_2,d_1,d_2\in\mathbb Z$, and suppose that $\mathrm M_j=c_j\alpha+d_j\beta$ 
$(j=1,2)$ are linearly independent. Then
$$ \iint_{\mathfrak w} |f(\mathrm M_1)f(\mathrm M_2)|^{15}\,\mathrm d\alpha\,
\mathrm d\beta \ll P^{22-1/6+\varepsilon}. $$
\end{lemma}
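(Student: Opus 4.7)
The plan is to reduce the two-dimensional minor arc integral to a product of one-dimensional integrals by first showing that on $\mathfrak w$ at least one of $\mathrm M_1, \mathrm M_2$ (viewed modulo $1$) must lie in a suitable set of one-dimensional minor arcs, and then exploiting the linear independence of $\mathrm M_1$ and $\mathrm M_2$ to decouple the two variables of integration.

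First I would take $Y=P^{1/3}$ and set $\mathfrak m=\mathfrak m_{P^4,Y}\cap[0,1]$, and establish the inclusion $\mathfrak w \subseteq \{(\alpha,\beta):\mathrm M_1\in\mathfrak m\}\cup\{(\alpha,\beta):\mathrm M_2\in\mathfrak m\}$. To verify the contrapositive, suppose that both $\mathrm M_1$ and $\mathrm M_2$ (mod $1$) lie in $\mathfrak N_{P^4,Y}$, so that there exist coprime pairs $(a_j,q_j)$ with $q_j\le Y$ and $|q_j\mathrm M_j-a_j|\le Y/P^4$ for $j=1,2$. Setting $D=|c_1d_2-c_2d_1|$ and $Q=Dq_1q_2\le DP^{2/3}$, inversion of the linear map produces integers $A,B$ for which $|Q\alpha-A|,|Q\beta-B|\ll Y^2/P^4=P^{-10/3}$. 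After dividing out $\gcd(A,B,Q)$, one obtains a common denominator $q'\le P$ with errors bounded by $P^{-3}$, which places $(\alpha,\beta)$ in $\mathfrak W$ once $P$ is sufficiently large.

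By symmetry it therefore suffices to bound
$$I_1:=\iint_{\mathrm M_1\in\mathfrak m}|f(\mathrm M_1)f(\mathrm M_2)|^{15}\,\mathrm{d}\alpha\,\mathrm{d}\beta.$$
Mimicking the change of variables used in the proof of Theorem \ref{thm5.4}, I pass to coordinates $(\alpha',\beta')=D^{-1}(\mathrm M_1,\mathrm M_2)$. The integrand transforms to $|f(D\alpha')f(D\beta')|^{15}$ with the minor-arc constraint now affecting only $D\alpha'$, and $\mathbb Z^2$-periodicity permits the parallelogram image of $[0,1]^2$ to be covered by $O(1)$ unit squares. Rescaling by $D$ in each variable and using the $\mathbb Z$-periodicity of $f$ decouples the integrand and yields
$$I_1\ll\biggl(\int_{\mathfrak m}|f(\alpha)|^{15}\,\mathrm{d}\alpha\biggr)\biggl(\int_0^1|f(\beta)|^{15}\,\mathrm{d}\beta\biggr).$$

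To conclude, I would apply Lemma \ref{lemma5.3} with $u=15$ (noting that $P^{4/15}\le Y=P^{1/3}\le P/8$): the minor-arc bound \eqref{4.5} yields $\int_{\mathfrak m}|f|^{15}\,\mathrm{d}\alpha\ll P^{65/6+\varepsilon}$, while trivially $\int_0^1|f|^{15}\,\mathrm{d}\alpha\le P\int_0^1|f|^{14}\,\mathrm{d}\alpha\ll P^{11+\varepsilon}$ by \eqref{1.5}. Multiplying these gives $I_1\ll P^{131/6+\varepsilon}=P^{22-1/6+\varepsilon}$, as required. The main obstacle is the first step: one must calibrate $Y$ so that both the denominator bound ($Q\le P$) and the approximation bound ($\le P^{-3}$) survive the passage through the linear map, which forces $Y$ to be no larger than roughly $P^{1/2}$, while still requiring $Y\ge P^{4/15}$ to invoke Lemma \ref{lemma5.3}.
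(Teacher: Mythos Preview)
Your proposal is correct and follows essentially the same line as the paper's proof: show that on the two-dimensional minor arcs at least one of $\mathrm M_1,\mathrm M_2$ lies in a one-dimensional minor arc, apply the linear change of variables \eqref{8.8} to decouple, and finish with Lemma~\ref{lemma5.3} together with a trivial bound on one factor to obtain $P^{65/6+\varepsilon}\cdot P^{11+\varepsilon}$. The only cosmetic difference is that the paper routes the first step through $\mathfrak p$ and the argument already recorded at the start of the proof of Theorem~\ref{thm5.4} (with height $P^{2/7}$), whereas you argue directly from $\mathfrak w$ to $\mathfrak W$ with height $P^{1/3}$; both choices lie in the admissible range $[P^{4/15},P/8]$ and lead to the same estimate.
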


\begin{proof} It is immediate from \eqref{pee} that $\mathfrak w \subset \mathfrak p$. 
Recall the initial  argument within the proof of Theorem \ref{thm5.4}. This shows that for 
$(\alpha,\beta)\in \mathfrak p$, the forms $\mathrm M_1$ and $\mathrm M_2$ cannot be 
in $\mathfrak N_{P^4,P^{2/7}}$ simultaneously. By symmetry we may therefore suppose 
that $\mathrm M_1 \in\mathfrak n_{P^4,P^{2/7}}$. Now apply the transformation formula 
as in \eqref{8.8}. One finds that for an appropriate non-zero integer $D$, depending at 
most on $\mathbf c$ and $\mathbf d$, one has 
$$\iint_{\mathfrak w}|f(\mathrm M_1)f(\mathrm M_2)|^{15}\,\mathrm d\alpha\, 
\mathrm d\beta \ll \int_0^1\!\!\int_{\mathfrak m} |f(D\alpha )f(D\beta)|^{15}\,
\mathrm d\alpha\,\mathrm d\beta ,$$
where $\mathfrak m = \mathfrak m_{P^4,P^{2/7}}$. Thus, applying a trivial estimate for 
one factor $f(D\beta)$, we deduce via Lemma \ref{lemma5.3} that
$$\iint_{\mathfrak w}|f(\mathrm M_1)f(\mathrm M_2)|^{15}\,\mathrm d\alpha\, 
\mathrm d\beta \ll P^\varepsilon \left( P^{65/6}\right) \left( P^{11}\right) 
\ll P^{22-1/6+\varepsilon}.$$
This completes the proof of the lemma.
\end{proof}

\begin{lemma}\label{lem8.2}
Suppose that any two of the binary linear forms $\mathrm M_1$, $\mathrm M_2$, 
$\mathrm M_3$ are linearly independent. Then
$$ \iint_{\mathfrak w} |f(\mathrm M_1)^{11}f(\mathrm M_2)^{11} f(\mathrm M_3)^4 |  \,\mathrm d\alpha\,\mathrm d\beta \ll P^{18-1/18+\varepsilon}. $$
\end{lemma}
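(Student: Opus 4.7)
The plan is to reduce the lemma to a direct invocation of Theorem \ref{thm5.4}. The key observation is that the circle-method minor arcs $\mathfrak w = [0,1]^2 \setminus \mathfrak W$ defined in \S7 satisfy the set-theoretic inclusion $\mathfrak w \subset \mathfrak p$, where $\mathfrak p = [0,1]^2 \setminus (\mathfrak M \times \mathfrak M)$ is the set defined in \eqref{pee} with $\mathfrak M = \mathfrak M_{P^4, P^{1/3}}$. Once this inclusion is in hand, equation \eqref{minor41111} of Theorem \ref{thm5.4} provides the conclusion at once, since the hypothesis that any two of $\mathrm M_1, \mathrm M_2, \mathrm M_3$ are linearly independent is identical to the assumption made in \S5 that the projective points $(c_i : d_i)$ are distinct.

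To verify $\mathfrak w \subset \mathfrak p$ it suffices to show $\mathfrak M \times \mathfrak M \subset \mathfrak W$. Given $(\alpha, \beta) \in \mathfrak M \times \mathfrak M$, there exist coprime pairs $(a, q_1)$ and $(b, q_2)$ with $q_1, q_2 \le P^{1/3}$ and $|q_1\alpha - a|, |q_2\beta - b| \le P^{-11/3}$. Setting $q = q_1 q_2 \le P^{2/3} \le P$, one obtains the simultaneous approximations $|q\alpha - aq_2|, |q\beta - bq_1| \le P^{1/3} \cdot P^{-11/3} = P^{-10/3} \le P^{-3}$. After dividing by the gcd of $(aq_2, bq_1, q)$, the resulting triple satisfies the defining conditions of $\mathfrak W$, confirming the inclusion. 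In fact this precise inclusion is already implicit in the proof of Lemma \ref{lem8.1}, where the author writes ``It is immediate from \eqref{pee} that $\mathfrak w \subset \mathfrak p$'', so one can simply refer back to that line.

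There is essentially no obstacle in the proof of this lemma itself; all the technical work has been carried out already in Theorem \ref{thm5.4}, whose proof relied on the mixed-moment estimate of Theorem \ref{thm5.3} and on a dissection using the smooth cutoff ${\sf N}_{P^4, P^{2/7}}$. The present lemma serves only to repackage that estimate in the geometric form of minor arcs $\mathfrak w$ needed for the circle-method argument of \S7--\S8. Consequently, the total proof consists of observing the inclusion above and appealing to \eqref{minor41111}.
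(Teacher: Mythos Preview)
Your proposal is correct and follows exactly the same route as the paper: the paper's proof consists of the single sentence ``On recalling that $\mathfrak w \subset \mathfrak p$, the lemma is immediate from Theorem \ref{thm5.4}.'' Your additional verification of the inclusion $\mathfrak M \times \mathfrak M \subset \mathfrak W$ is sound (and more explicit than what the paper provides), but otherwise the argument is identical.
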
 

\begin{proof} On recalling that $\mathfrak w \subset \mathfrak p$, the lemma is immediate 
from Theorem \ref{thm5.4}.\end{proof}

We are now fully equipped to complete the proof of Theorem \ref{theorem1.1}. Suppose 
that we are given a pair of equations \eqref{1.1} with $s\ge 26$, $q_0\ge 15$ and profile 
$(r_1,r_2,\ldots,r_\nu)$. The parameter $l=s-r_1-r_2$ determines our argument. In the 
notation of Section 7, we let $\mathscr F= \mathscr F_k$ with $k=0$ or $1$ be the 
generating function defined in \eqref{7.2}.\par

Small values of $l$ call for special attention. Initially, we consider the situation with 
$0\le l \le 3$. We apply Lemma \ref{lemA3} with $J_1$ and $J_2$ the subsets of the set of 
indices $\{1,2,\ldots ,s\}$ counted by $r_1$ and $r_2$, respectively, and with $J_3$ the 
subset consisting of the remaining indices. Then $\text{card}(J_3)=l$. We also choose 
$$M_\nu=\ldots =M_4=0,\quad M_3=l,\quad M_2=15-l\quad \text{and}\quad M_1=s-15.$$
The condition 
$q_0\ge 15$ ensures that 
$r_1\le s-15$, and $r_1+r_2=s-l=M_1+M_2$. Also, we have $M_1=s-15\ge 15-l=M_2$ 
because $r_1\ge r_2 \ge 15-l$ and $s=r_1+r_2+l\ge 2r_2 +l \ge 30-l$. Finally, since 
$0\le l\le 3$ it is apparent that $M_2=15-l\ge l=M_3$. Therefore, Lemma \ref{lemA3} is 
indeed applicable and delivers the bound
$$ \iint_{\mathfrak w} \mathscr F(\alpha,\beta)\,\mathrm d\alpha\,\mathrm d\beta
\ll \iint _{\mathfrak w}|f(\mathrm M_1)^{s-15}f(\mathrm M_2)^{15-l}f(\mathrm M_3)^l| 
\,\mathrm d\alpha\,\mathrm d\beta ,$$
where each of the $\mathrm M_j$ is one of the linear forms $\Lambda_i$, and any two of 
the $\mathrm M_j$ are linearly independent. We now reduce the exponent $s-15$ to $15-l$ 
and then apply H\"older's inequality. Thus
\begin{align*}
\iint_{\mathfrak w} \mathscr F(\alpha,\beta)\,\mathrm d\alpha\,\mathrm d\beta &\ll   
P^{s-30+l}\iint_{\mathfrak w} |f(\mathrm M_1)^{15-l}f(\mathrm M_2)^{15-l} 
f(\mathrm M_3)^l|  \,\mathrm d\alpha\,\mathrm d\beta \\
&\ll \Ups_1^{l/4}\Ups_2^{1-l/4},
\end{align*}
where
\begin{align*}
\Ups_1&=\iint_{\mathfrak w}|f(\mathrm M_1)^{11}f(\mathrm M_2)^{11}
f(\mathrm M_3)^4|  \,\mathrm d\alpha\,\mathrm d\beta,\\
\Ups_2&=\iint_{\mathfrak w} |f(\mathrm M_1)f(\mathrm M_2)|^{15}\,\mathrm d\alpha\,
\mathrm d\beta .
\end{align*}
In this scenario, therefore, we deduce from Lemmata \ref{lem8.1} and \ref{lem8.2} that
\begin{align}
\iint_{\mathfrak w} \mathscr F(\alpha,\beta)\,\mathrm d\alpha\,\mathrm d\beta &\ll 
P^{s-30+l+\varepsilon}\left( P^{18-1/18}\right)^{l/4}\left( P^{22-1/6}\right)^{1-l/4}
\notag \\
&\ll P^{s-8-1/18+\varepsilon}.\label{8.9}
\end{align}

We may now suppose that $l\ge 4$. Then $r_1\le s-15$ and $r_1+r_2\le s-4$. In Lemma 
\ref{lemA3} we now take $J_j$ to be the subset of the set of indices $\{1,2,\ldots ,s\}$ 
counted by $r_j$. We also choose
$$M_\nu=\ldots =M_4=0,\quad M_3=4,\quad M_2=11\quad \text{and}\quad M_1=s-15,$$
and note that the hypothesis $s\ge 26$ ensures that $M_1\ge M_2$. The conditions 
required to apply Lemma \ref{lemA3} are consequently in play, and we deduce that
$$ \iint_{\mathfrak w} \mathscr F(\alpha,\beta)\,\mathrm d\alpha\,\mathrm d\beta \ll 
\iint_{\mathfrak w}|f(\mathrm M_1)|^{s-15}|f(\mathrm M_2)|^{11}
|f(\mathrm M_3)|^{4}\,\mathrm d\alpha\,\mathrm d\beta, $$
where again each of the $\mathrm M_j$ is one of the linear forms $\Lambda_i$, and any 
two of the $\mathrm M_j$ are linearly independent. Here $s-15\ge 11$ by the hypothesis 
$s\ge 26$, and we may estimate excessive copies of $f(\mathrm M_1)$ trivially and apply 
Lemma \ref{lem8.2}. This confirms that
\eqref{8.9} also holds for $l\ge 4$. In particular, we have \eqref{7minor} subject to the 
hypotheses of Theorem \ref{theorem1.1}. This completes the proof of Theorem 
\ref{theorem1.1}.

\section{The proof of theorem \ref{theorem1.2}}
We continue to use the notation introduced in \S\S8 and 9, but now suppose that the 
hypotheses of Theorem \ref{theorem1.2} are met. Hence $s=25$ and $r_1\le s-q_0 \le 9$. 
We also assume that $r_5\ge 1$. Our goal on this occasion is the estimate
\begin{equation}\label{10.1}
\iint_{\mathfrak w} \mathscr F(\alpha,\beta)\,\mathrm d\alpha\,\mathrm d\beta 
\ll P^{17-1/24+\varepsilon}.
\end{equation}
Once this is established, Theorem \ref{theorem1.2} follows in the same way as Theorem 
\ref{theorem1.1} was deduced from \eqref{8.9}.\par

We apply Lemma \ref{lemA3} with $J_j$ the subset of the set of indices $\{1,2,\ldots ,s\}$ 
counted by $r_j$ for $1\le j\le \nu$. Also, we put $m_j=r_j$ for each $j$ and
$$M_\nu=\ldots =M_6=0,\quad M_5=M_4=1, \quad M_3=5\quad \text{and}\quad 
M_2=M_1=9.$$
On recalling that $r_1\le 9$, it is immediate that \eqref{Hyp1} and \eqref{Hyp2} hold. 
Hence, Lemma \ref{lemA3} is applicable, and yields linear forms 
$\mathrm M_1, \ldots,\mathrm M_5$ that are linearly independent in pairs, where each 
$\mathrm M_j$ is one of the $\Lambda_i$, and where
$$  \iint_{\mathfrak w} \mathscr F(\alpha,\beta)\,\mathrm d\alpha\,\mathrm d\beta
\le \iint_{\mathfrak w} |f(\mathrm M_1)^9f(\mathrm M_2)^9f(\mathrm M_3)^5
f(\mathrm M_4)f(\mathrm M_5)|\,\mathrm d\alpha\,\mathrm d\beta. $$
By H\"older's inequality, we find that
$$ \iint_{\mathfrak w} \mathscr F(\alpha,\beta)\,\mathrm d\alpha\,\mathrm d\beta
\le \Ups_3^{1/4} \Ups_4^{3/4},$$
where
\begin{align*}
\Ups_3&= \int_0^1\!\!\int_0^1|f(\mathrm M_1)f(\mathrm M_2)f(\mathrm M_4)
f(\mathrm M_5)|^4 |f(\mathrm M_3)|^2\,\mathrm d\alpha\,\mathrm d\beta,\\
\Ups_4&=\iint_{\mathfrak w}|f(\mathrm M_1)f(\mathrm M_2)|^{32/3}|f(\mathrm M_3)|^6
\, \mathrm d\alpha\, \mathrm d\beta.
\end{align*}
Making use of the bounds supplied by Theorem \ref{thm6.2} and Theorem \ref{thm5.4} 
with $u=32/3$, we therefore infer that
$$\iint_{\mathfrak w} \mathscr F(\alpha,\beta)\,\mathrm d\alpha\,\mathrm d\beta 
\ll P^\varepsilon \left( P^{11}\right)^{1/4}\left( P^{19-1/18}\right)^{3/4}
\ll P^{17-1/24+\varepsilon}.$$
Thus the bound \eqref{10.1} is confirmed, and the proof of Theorem \ref{theorem1.2} is 
complete.\medskip

Finally, we briefly comment on the prospects of reducing the number of variables further. 
Note that the estimates for the minor arcs and for the whole unit square in Theorem 
\ref{thm5.1} coincide for $u=25/3$. Since $\delta(25/3)=0$, therefore, when $s=25$ our 
basic method narrowly fails to be applicable to the system of equations \eqref{1.1}. Further, 
it transpires that each additional variable contributes a factor $P$ to the major arc 
contribution, but only $P^{5/6}$ to the minor arc versions of Theorems \ref{thm5.1} and 
\ref{thm5.2}. As indicated in \S 1 already, it is worth comparing the $18$th moment 
($u=6$) in Theorem \ref{thm5.1} with that in Theorem \ref{thm6.2}, the latter being 
superior by a factor $P^{1/6}$. It transpires that even if it were possible to propagate this 
saving through the moment method, then we would still fail to handle cases of \eqref{1.1} 
with $s=24$, but only by a factor $P^\varepsilon$. However, at this stage, the only 
workable compromise seems to be to apply Theorem \ref{thm6.2} in conjunction with 
Theorems \ref{thm5.1} or \ref{thm5.4}, via H\"older's inequality. If the profile of the 
equations \eqref{1.1} is even more illustrious than in Theorem \ref{theorem1.2}, then one 
can put more weight on the bound stemming from Theorem \ref{thm6.2}. For example, if 
we suppose that $s=24$ and $r_1\le 5$, then $\nu\ge 5$ and $r_5\le 4$, so that in 
hopefully self-explanatory notation, the minor arc contribution can be reduced to something 
of the shape
$$
\iint_{\mathfrak w} \mathscr F(\alpha,\beta)\,\mathrm d\alpha\,\mathrm d\beta 
\ll \iint_{\mathfrak w} |f(\mathrm M_1)^5f(\mathrm M_2)^5f(\mathrm M_3)^5
f(\mathrm M_4)^5f(\mathrm M_5)^4|\,\mathrm d\alpha\,\mathrm d\beta . $$
One may then introduce the identity \eqref{2.4} with $\alpha=\mathrm M_j$ for all 
$1\le j\le 5$ simultaneously. The most difficult term that then arises is that weighted with 
${\sf n}(\mathrm M_1)\cdots{\sf n}(\mathrm M_5)$. A cascade of applications of H\"older's 
inequality together with Theorem \ref{5.1} shows this term to be bounded by 
$$ (\Ups_3)^{3/5}  (J_{11})^{2/5} \ll P^{16+1/15+\varepsilon}, $$
which is quite far from saving another variable.

\bibliographystyle{amsbracket}
\providecommand{\bysame}{\leavevmode\hbox to3em{\hrulefill}\thinspace}

\end{document}